\def\thm@space@setup{%
  \thm@preskip=\parskip \thm@postskip=0pt
}
\numberwithin{equation}{section}
\renewcommand{\cal}{\mathcal}
\newcommand\cB{{\mathcal B}}
\newcommand{\cC}{{\cal C}}
\newcommand{\cF}{{\cal F}}
\newcommand{\cG}{{\cal G}}
\newcommand{\cL}{{\cal L}}
\newcommand\cW{{\mathcal W}}
\newcommand{\fa}{{\frak a}}
\newcommand{\fb}{{\frak b}}
\newcommand{\fc}{{\frak c}}
\newcommand{\ft}{{\frak t}}
\newcommand{\fG}{{\frak G}}
\newcommand{\bme}{\bm{e}}
\newcommand{\bmx}{{\bm{x}}}
\newcommand{\bmy}{{\bm{y}}}
\newcommand{\rd}{{\rm d}}
\newcommand{\ri}{\mathrm{i}}
\newcommand{\bC}{{\mathbb C}}
\newcommand{\bE}{\mathbb{E}}
\newcommand{\bP}{\mathbb{P}}
\newcommand{\bR}{{\mathbb R}}
\newcommand{\bZ}{\mathbb{Z}}
\newcommand{\bW}{\mathbb{W}}
\newcommand{\bY}{\mathbb{Y}}
\newcommand{\bM}{\mathbb{M}}
\newcommand{\bH}{\mathbb{H}}
\newcommand{\e}{{\varepsilon}}
\newcommand{\la}{\lambda}
\DeclareMathOperator{\dist}{dist}
\DeclareMathOperator{\OO}{O}
\DeclareMathOperator{\oo}{o}
\renewcommand{\Im}{\mathop{\mathrm{Im}}}
\newcommand{\deq}{\mathrel{\mathop:}=} %define :=
\newcommand{\eqd}{=\mathrel{\mathop:}} 
\renewcommand{\leq}{\leqslant}
\renewcommand{\geq}{\geqslant}
\newcommand{\del}{\partial}
\newcommand{\beq}{\begin{equation}}
\newcommand{\eeq}{\end{equation}}
\theoremstyle{plain} %plain, definition, remark
\newtheorem{theorem}{Theorem}[section]
\newtheorem*{theorem*}{Theorem}
\newtheorem{lemma}[theorem]{Lemma}
\newtheorem*{lemma*}{Lemma}
\newtheorem{corollary}[theorem]{Corollary}
\newtheorem*{corollary*}{Corollary}
\newtheorem{proposition}[theorem]{Proposition}
\newtheorem*{proposition*}{Proposition}
\newtheorem{assumption}[theorem]{Assumption}
\newtheorem*{assumption*}{Assumption}
\newtheorem{claim}[theorem]{Claim}
\newtheorem*{definition*}{Definition}
\newtheorem*{example*}{Example}
\newtheorem{remark}[theorem]{Remark}
\newtheorem*{remark*}{Remark}
\newtheorem*{remarks*}{Remarks}
\def\author#1{\par
    {\centering{\authorfont#1}\par\vspace*{0.05in}}
}
\def\titlefont{\fontsize{13}{15}\bfseries\boldmath\selectfont\centering{}}
\def\authorfont{\fontsize{13}{15}}
\let\affiliationfont\rhfont
\def\address#1{\par
    {\centering{\affiliationfont#1\par}}\par\vspace*{11pt}
}
\def\body{
\setcounter{footnote}{0}
\def\thefootnote{\alph{footnote}}
\def\@makefnmark{{$^{\rm \@thefnmark}$}}
}
\def\title#1{
    \thispagestyle{plain}
    \vspace*{-14pt}
    \vskip 79pt
    {\centering{\titlefont #1\par}}%
    \vskip 1em
}
\newcommand{\bmla}{\bm{\lambda}}
\newcommand{\cov}{{\rm{cov}}}
\begin{document}
\title{$\beta$-Nonintersecting Poisson Random Walks: Law of Large Numbers and Central Limit Theorems}

\vspace{1.2cm}

 \author{Jiaoyang Huang}
\address{Harvard University\\
   E-mail: jiaoyang@math.harvard.edu}

~\vspace{0.3cm}

\begin{abstract}
We study the $\beta$ analogue of the nonintersecting Poisson random walks. 
We derive a stochastic differential equation of the Stieltjes transform of the empirical measure process, which can be viewed as a dynamical version of the Nekrasov's equation in \cite[Section 4]{MR3668648}.
We find that the empirical measure process 
converges weakly in the space of c{\'a}dl{\'a}g measure-valued processes to a deterministic process, characterized by the quantized free convolution, as introduced in \cite{MR3361772}. 
For suitable initial data, we prove that the rescaled empirical measure process converges weakly in the space of distributions acting on analytic test functions to a Gaussian process. The means and the covariances are universal, and coincide with those of $\beta$-Dyson Brownian motions with the initial data constructed by the Markov-Krein correspondence. Especially, the covariance structure can be described in terms of the Gaussian Free Field. Our proof relies on integrable features of the generators of the $\beta$-nonintersecting Poisson random walks, the method of characteristics, and a coupling technique for Poisson random walks.

\end{abstract}

%\keywords{} 
%\vspace{.5cm}

\begingroup% http://ctan.org/pkg/hyperref
\hypersetup{linkcolor=black}
 \tableofcontents
\endgroup

\date{\today}

\vspace{-0.7cm}

\section{Introduction}

\subsection{$\beta$-nonintersecting Poisson random walks}
Let $\tilde\bmx(t)=(\tilde x_1(t),\tilde x_2(t),\cdots, \tilde x_n(t))$ be the continuous-time \emph{Poisson random walk} on $\bZ_{\geq 0}^n$, i.e. particles independently jump to the neighboring right site with rate $n$.  The generator of $\bmx(t)$ is given by 
\begin{align*}
\tilde \cL^n f(\tilde \bmx)=\sum_{i=1}^nn\left(f(\tilde\bmx+\bme_i)-f(\tilde \bmx)\right),
\end{align*}
where $\{\bme_i\}_{1\leq i\leq n}$ is the standard vector basis of $\bR^n$. $\tilde \bmx(t)$ conditioned never to collide with each other is the \emph{nonintersecting Poisson random walk}, denoted by $\tilde \bmx(t)=(x_1(t),x_2(t),\cdots,x_n(t))$. The nonintersecting condition has probability zero, and therefore,
needs to be defined through a limit procedure which is performed in \cite{MR1887625}. The nonintersecting Poisson random walk is a continuous time Markov process on 
\begin{align*}
\bW^n_1=\{(\la_1+(n-1),\la_2+(n-2),\cdots,\la_n): (\la_1,\la_2,\cdots,\la_n)\in \bZ_{\geq 0}^n, \la_1\geq\la_2\geq\cdots\geq\la_n\geq 0\},
\end{align*}
with generator
\begin{align*}
\cL_1^n f(\bmx)=n\sum_{i=1}^n\frac{V(\bmx+\bme_i)}{V(\bmx)}\left(f(\bmx+\bme_i)-f(\bmx)\right)=n\sum_{i=1}^{n}\left(\prod_{j:j\neq i}\frac{x_i-x_j+1}{x_i-x_j}\right)\left(f(\bmx+\bme_i)-f(\bmx)\right),
\end{align*}
where $V(\bmx)=\prod_{1\leq i<j\leq n}(x_i-x_j)$
is 
the Vandermond determinant  in variables $x_1,x_2,\cdots,x_n$.

If instead of the Poisson random walk, we start from $n$ independent Brownian motions with mean $0$ and variance $t/n$, then the same conditioning leads to the celebrated \emph{Dyson Brownian motion} with $\beta=2$, which describes the stochastic evolution of eigenvalues of a Hermitian matrix under independent Brownian motion of its entries. For general $\beta>0$, the \emph{$\beta$-Dyson Brownian motion} $\bmy(t)=(y_1(t), y_2(t),\cdots, y_n(t))$ is a diffusion process solving 
\begin{align}\label{e:DBM1}
\rd y_i(t)=\sqrt{\frac{2}{\beta n}}\rd \cB_i(t)+\frac{1}{n}\sum_{j\neq i}\frac{1}{y_i(t)-y_j(t)}\rd t,\quad i=1,2,\cdots, n,
\end{align}
where $\{(\cB_1(t), \cB_2(t),\cdots, \cB_n(t))\}_{t\geq 0}$ are independent standard Brownian motions, and $\{\bmy(t)\}_{t>0}$ lives on the Weyl chamber $\bW^n=\{(\la_1,\la_2,\cdots,\la_n): \la_1>\la_2>\cdots>\la_n\}$. 

The nonintersecting Poisson random walk can be viewed as a discrete version of the Dyson Brownian motion with $\beta=2$. For general $\beta>0$, we
fix $\theta=\beta/2$ and define the \emph{$\beta$-nonintersecting Poisson random walk}, denoted by $\bmx(t)=(x_1(t),x_2(t),\cdots,x_n(t))$, as a continuous time Markov process on 
\begin{align}\label{e:defWtheta}
\bW^n_\theta=\{(\la_1+(n-1)\theta,\la_2+(n-2)\theta,\cdots,\la_n): (\la_1,\la_2,\cdots,\la_n)\in \bZ_{\geq0}^n, \la_1\geq\la_2\geq\cdots\geq\la_n\geq 0\},
\end{align}
with generator
\begin{align}\label{e:generator}
\cL^n_\theta f(\bmx)=\theta n\sum_{i=1}^n\frac{V(\bmx+\theta\bme_i)}{V(\bmx)}\left(f(\bmx+\bme_i)-f(\bmx)\right)=\theta n\sum_{i=1}^{n}\left(\prod_{j:j\neq i}\frac{x_i-x_j+\theta}{x_i-x_j}\right)\left(f(\bmx+\bme_i)-f(\bmx)\right).
\end{align}

In the beautiful article \cite{MR3418747}, Gorin and Shkolnikov constructed certain multilevel discrete Markov chains whose top level dynamics coincide with the $\beta$-nonintersecting Poisson random walks. However, we use slightly different notations, and speed up time by $n$. In \cite{MR3418747}, the $\beta$-nonintersecting Poisson random walks are constructed as stochastic dynamics on Young diagrams. We recall that a Young diagram $\bm\lambda$, is a non-increasing sequence of integers
\begin{align*}
\bm\lambda=(\la_1,\la_2,\la_3, \cdots), \quad \lambda_1\geq \lambda_2\geq\la_3\geq\cdots\geq 0.
\end{align*}
%
%A Young diagram, or a partition $\lambda$, is a non-increasing sequence of integers
%\begin{align}
%\bm\lambda=(\la_1,\la_2,\la_3, \cdots), \quad \lambda_1\geq \lambda_2\geq\la_3\geq\cdots\geq 0.
%\end{align}
We denote $\ell_{\bmla}$ the number of non-empty rows in $\bmla$, i.e. $\la_{\ell_{\bmla}}>0, \la_{\ell_{\bmla}+1}=\la_{\ell_{\bmla}+2}=\cdots =0$, and $|\bmla|=\sum_{i=1}^{\ell_{\bmla}}\la_i$ the number of boxes in $\bmla$.  Let $\bY^n$ denote the set of all Young diagrams with at most $n$ rows, i.e. $\ell_{\bmla}\leq n$.
A box $\Box\in \bmla$ is a pair of integers, 
\begin{align*}
\Box=(i,j)\in {\bm\lambda}, \text{ if and only if } 1\leq i\leq \ell_\lambda, 1\leq j\leq \lambda_i.
\end{align*} 
We denote $\bmla'$ the transposed diagram  of $\bm\lambda$, defined by 
\begin{align*}
\lambda_j'=|\{i: 1\leq j\leq \lambda_i\}|, \quad 1\leq j\leq \la_1.
\end{align*} 
For a box $\Box=(i,j)\in \bmla$, its arm $a_\Box$, leg $l_\Box$, co-arm $a_\Box'$ and co-leg $l_\Box'$ are 
\begin{align*}
a_\Box=\lambda_i-j,\quad l_\Box=\lambda_j'-i,\quad a_\Box'=j-1,\quad l_\Box'=i-1.
\end{align*}

Given a $\beta$-nonintersecting Poisson random walk $\bmx(t)$, we can view it as a growth process on $\bY^n$, by defining $\bmla(t)$ by 
\begin{align}\label{e:defla}
\la_i(t)=x_i(t)-(n-i)\theta, \quad 1\leq i\leq n.
\end{align}
Since $\bmx(t)\in \bW_\theta^n$, we have $\la_1(t)\geq \la_2(t)\geq \cdots\geq \la_n(t)\geq0$, and thus $\bmla(t)$ is a continuous time Markov process on $\bY^n$. Its jump rate is given in \cite[Proposition 2.25]{MR3418747} rescaled by $n$, which, after simplification, is the same as \eqref{e:generator}. There is a simple formula for the transition probability of $\bmla(t)$ with zero initial data \cite[Proposition 2.9, 2.28]{MR3418747}. However, there are no simple formulas for the transition probabilities of $\bmla(t)$ with general initial data.

\begin{theorem}\label{t:density}
Suppose the initial data of $\bmla(t)$ is the empty Young diagram. Then for any fixed $t>0$, the law of $\bmla(t)$ is given by
\begin{align}\label{e:density}
\bP_t(\la_1,\la_2,\cdots, \la_n)=
e^{-\theta t n^2}(\theta tn)^{|\bmla|}\prod_{\Box\in \bmla}\frac{\theta n+a_\Box'-\theta l_\Box'}{(a_\Box+\theta l_\Box+\theta)(a_\Box+\theta l_\Box +1)}.
\end{align}
\end{theorem}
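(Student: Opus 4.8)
The plan is to show that the right side of \eqref{e:density} solves the Kolmogorov forward equation of the Markov chain $\bmla(t)$ with the stated initial condition, and then to invoke uniqueness. First, the rates in \eqref{e:generator} are uniformly bounded: the total exit rate from every state equals $\theta n^2$, because of the identity $\sum_{i=1}^{n}\prod_{j\ne i}\frac{x_i-x_j+\theta}{x_i-x_j}=n$ with $x_i=\la_i+(n-i)\theta$, which I would prove by observing that the left side is $\theta^{-1}$ times the sum of the residues of $z\mapsto\prod_{j}\frac{z-x_j+\theta}{z-x_j}$; this rational function equals $1+n\theta z^{-1}+O(z^{-2})$ at infinity and has only these simple poles, so the residue sum is $n\theta$ and the left side is $n$. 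Consequently the chain is non-explosive and $\bP_t$ is the unique $\ell^1$ solution of
\[
\frac{d}{dt}\bP_t(\bmla)=\theta n\sum_{r:\,\la_r>\la_{r+1}}\Big(\prod_{j\ne r}\frac{x_r-1-x_j+\theta}{x_r-1-x_j}\Big)\bP_t(\bmla-\bme_r)-\theta n^2\,\bP_t(\bmla),\qquad \bP_0=\delta_\emptyset,
\]
where $\la_{n+1}:=0$ and only corners $r$ contribute (if $\la_r=\la_{r+1}$ then $\bmla-\bme_r$ is not a Young diagram). Substituting $\bP_t(\bmla)=e^{-\theta tn^2}(\theta tn)^{|\bmla|}W(\bmla)$ with $W(\bmla)=\prod_{\Box\in\bmla}\frac{\theta n+a_\Box'-\theta l_\Box'}{(a_\Box+\theta l_\Box+\theta)(a_\Box+\theta l_\Box+1)}$, and using $|\bmla-\bme_r|=|\bmla|-1$, the exponentials, the powers of $\theta tn$, and the term $\theta n^2\bP_t(\bmla)$ all cancel and the equation collapses to the time-independent identity
\[
|\bmla|\,W(\bmla)=\sum_{r:\,\la_r>\la_{r+1}}\Big(\prod_{j\ne r}\frac{x_r-1-x_j+\theta}{x_r-1-x_j}\Big)W(\bmla-\bme_r);
\]
the initial condition is immediate since $(\theta tn)^{|\bmla|}\to\1[\bmla=\emptyset]$ and $W(\emptyset)=1$.

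Proving this box-removal identity is the main obstacle. Removing the corner box $c=(r,\la_r)$ deletes from $W(\bmla)$ exactly the numerator factor $\theta n+a_c'-\theta l_c'=x_r+\theta-1$ together with the corner's own hook factors $\theta$ and $1$, decreases by one the arm of each box $(r,j)$ with $j<\la_r$ and the leg of each box $(i,\la_r)$ with $i<r$, and leaves everything else (in particular all co-arms and co-legs) unchanged. This yields an explicit product formula for $W(\bmla-\bme_r)/W(\bmla)$ in terms of the hook values $a_\Box+\theta l_\Box$ along the $r$-th row and the $\la_r$-th column; substituting it reduces the claim to a rational-function identity in $x_1,\dots,x_n$, which I would settle by comparing simple poles (in $x_r$) and behaviour at infinity, as in the total-rate computation above. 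Equivalently, one may recognize $W(\bmla)$ as an explicit scalar times the principal specialization of the Jack polynomial $J_\bmla$ at $(1^n)$ divided by its squared norm, in which case the identity is exactly the Jack--Pieri (branching) rule applied to the Euler operator $p_1\partial_{p_1}$. Keeping track of the hook-length changes simultaneously at all corners, so that the sum collapses, is the delicate part.

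A shorter route bypasses this verification: by \eqref{e:defla}, $\bmla(t)$ is the $\beta$-nonintersecting Poisson random walk of Gorin and Shkolnikov run at $n$ times the speed, so \cite[Prop.~2.25]{MR3418747} identifies the generator with \eqref{e:generator}, while \cite[Prop.~2.9 and 2.28]{MR3418747} give the law from the empty diagram; it then remains only to set their time equal to $nt$ --- which produces the prefactor $e^{-\theta tn^2}(\theta tn)^{|\bmla|}$ --- and to rewrite their hook-type product in the arm/leg/co-arm/co-leg notation of \eqref{e:density}. As a consistency check one verifies $\sum_{|\bmla|=m}W(\bmla)=n^m/m!$, so that the total mass is $e^{-\theta tn^2}\sum_{m\ge0}(\theta tn^2)^m/m!=1$. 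On this route the only obstacle is the routine notational and time-scaling translation.
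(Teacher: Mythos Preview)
The paper does not give its own proof of this theorem: it is stated immediately after the sentence ``There is a simple formula for the transition probability of $\bmla(t)$ with zero initial data \cite[Proposition 2.9, 2.28]{MR3418747}'' and is treated as a quotation of that result, with the time rescaled by $n$. Your ``shorter route'' is therefore exactly the paper's approach, down to the specific propositions cited.

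Your first route --- verifying the Kolmogorov forward equation directly --- is an independent argument the paper does not attempt. The reduction is sound: the total-rate identity you use is precisely Lemma~\ref{l:average} of the paper (your residue proof is a clean alternative to the paper's antisymmetry argument), and the substitution $\bP_t(\bmla)=e^{-\theta tn^2}(\theta tn)^{|\bmla|}W(\bmla)$ correctly collapses the forward equation to the time-independent box-removal identity. You are right that this identity is the Jack--Pieri rule in disguise, and that the hook-factor bookkeeping at the corners is where the work lies; you do not actually carry it out, so as a self-contained proof this route remains a sketch. But since the paper itself simply imports the result from \cite{MR3418747}, your second route already matches what the paper does, and your first route would, if completed, give strictly more than the paper provides.
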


It is proven in {\cite[Theorem 3.2]{MR3418747}} that the Markov process $\bm\la(t)$ converges in the diffusive scaling limit to the $\beta$-Dyson Brownian motion. 
\begin{theorem}
Fix $\theta=\beta/2\geq 1/2$ and let $\varepsilon>0$ be a small parameter. Let $\bmx(t)$ be the $\beta$-nonintersecting Poisson random walk starting at $\bmx(0)\in \bW^n_\theta$. We define $\bm\la(t)$ as in \eqref{e:defla} and the rescaled stochastic process $\bm\la^\varepsilon(t)=(\la_1^\e(t),\la_2^\e(t),\cdots, \la_n^\e(t))$ be defined through,
\begin{align*}
\la_i^\e(t)\deq 
\varepsilon ^{1/2}\left(\frac{\la_i(t/\varepsilon)}{\theta n}-\frac{t}{\varepsilon }\right),\quad i=1,2,\cdots, n.
%\frac{\e^{1/2}\left(\la_i(\e^{-1}(t/\theta))-\e^{-1}t\right)}{n},\quad i=1,2,\cdots, n.
\end{align*}
  Suppose that as $\e\rightarrow 0$, the initial data $\bm\la^\e(0)$ converges to a point $\bmy(0)\in \bW^n$. Then the process $\bm\la^\e(t)$ converges in the limit $\e\rightarrow 0$ weakly in the Skorokhod topology towards the $\beta$-Dyson Brownian motion $\bmy(t)=(y_1(t), y_2(t),\cdots, y_n(t))$ as in \eqref{e:DBM1}.
%\begin{align}\label{e:DBM2}
%\rd y_i(t)=\frac{\rd \cB_i(t)}{\sqrt{n}}+\frac{\beta}{2n}\sum_{j\neq i}\frac{1}{y_i(t)-y_j(t)}\rd t,\quad i=1,2,\cdots, n,
%\end{align}
%where $\{(\cB_1(t), \cB_2(t), \cdots, \cB_n(t))\}_{t\geq 0}$ are independent standard Brownian motions, and $\bmy(t)$ lives on the Weyl chamber $\bW^n$.
\end{theorem}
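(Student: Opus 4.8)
I would prove this by a martingale-problem argument: here $n$ is fixed, so it is a statement about a single continuous-time Markov chain converging to a single diffusion, and the explicit law of Theorem~\ref{t:density} is of no use since $\bmx(0)$ is arbitrary. Let $\cA$ be the generator of the $\beta$-Dyson Brownian motion \eqref{e:DBM1},
\begin{align*}
\cA f(\bmy)=\frac{1}{\beta n}\sum_{i=1}^n\partial_i^2 f(\bmy)+\frac1n\sum_{i=1}^n\sum_{j:j\neq i}\frac{1}{y_i-y_j}\,\partial_i f(\bmy),
\end{align*}
acting on smooth $f$ with compact support in $\bW^n$. The first step is to write down the generator of $\bm\la^\e$. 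Passing from $\bmx(t)$ to $\bm\la^\e(t)$, with $\la_i^\e(t)=\e^{1/2}\big(\la_i(t/\e)/(\theta n)-t/\e\big)$, amounts to speeding up time by $\e^{-1}$, rescaling space so that the unit jump $\bme_i$ of $x_i$ becomes a jump of size $\e^{1/2}/(\theta n)$ of $\la_i^\e$, and subtracting the deterministic comoving drift $\e^{-1/2}t$ in every coordinate; as this last shift cancels in all differences, $x_i-x_j=\theta n\e^{-1/2}(\la_i^\e-\la_j^\e)+(j-i)\theta$, and
\begin{align*}
\cL^{n,\e}f(\bm\la^\e)=\frac{\theta n}{\e}\sum_{i=1}^n\Big(\prod_{j:j\neq i}\frac{x_i-x_j+\theta}{x_i-x_j}\Big)\Big(f\big(\bm\la^\e+\tfrac{\e^{1/2}}{\theta n}\bme_i\big)-f(\bm\la^\e)\Big)-\e^{-1/2}\sum_{i=1}^n\partial_i f(\bm\la^\e).
\end{align*}

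Next I would Taylor-expand $\cL^{n,\e}f$. Because $\bmx(t)\in\bW^n_\theta$, consecutive particles obey the hard constraint $x_i-x_{i+1}=\la_i-\la_{i+1}+\theta\geq\theta$, so $x_i-x_j\geq(j-i)\theta$ for $i<j$; hence on the event that $\bm\la^\e$ stays in a compact set with all consecutive gaps at least $\delta>0$, each factor of the product is $1+\OO(\e^{1/2})$ and, with implicit constants depending only on $\delta$ and $n$,
\begin{align*}
\prod_{j:j\neq i}\frac{x_i-x_j+\theta}{x_i-x_j}=\prod_{j:j\neq i}\Big(1+\frac{\theta}{x_i-x_j}\Big)=1+\frac{\e^{1/2}}{n}\sum_{j:j\neq i}\frac{1}{\la_i^\e-\la_j^\e}+\OO(\e).
\end{align*}
Combining this with $f\big(\bm\la^\e+\tfrac{\e^{1/2}}{\theta n}\bme_i\big)-f(\bm\la^\e)=\tfrac{\e^{1/2}}{\theta n}\partial_i f+\tfrac{\e}{2\theta^2 n^2}\partial_i^2 f+\OO(\e^{3/2})$ and the prefactor $\tfrac{\theta n}{\e}$, the $\OO(\e^{-1/2})$ term equals $\e^{-1/2}\partial_i f$ and cancels the comoving drift; the surviving finite terms are $\tfrac1n\sum_{j\neq i}\tfrac{1}{\la_i^\e-\la_j^\e}\partial_i f$ and $\tfrac{1}{2\theta n}\partial_i^2 f=\tfrac1{\beta n}\partial_i^2 f$; and everything else is $\OO(\e^{1/2})$. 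Summing over $i$ gives $\cL^{n,\e}f\to\cA f$ uniformly on such compact sets — the key input.

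I would then turn this into weak convergence in the Skorokhod topology. For $\delta>0$ set $\tau_\delta^\e=\inf\{t:\min_i(\la_i^\e(t)-\la_{i+1}^\e(t))\leq\delta\}$. Running the expansion for the stopped processes $\bm\la^\e(\cdot\wedge\tau_\delta^\e)$, Dynkin's formula together with the Burkholder--Davis--Gundy inequality for the compensated martingale part (whose jumps are $\OO(\e^{1/2})$) give uniform moment and modulus-of-continuity bounds, hence tightness of $\{\bm\la^\e(\cdot\wedge\tau_\delta^\e)\}_\e$ in $D([0,\infty),\bW^n)$ by the Aldous--Rebolledo criterion; the generator convergence identifies every subsequential limit as a solution of the martingale problem for $\cA$ stopped at the corresponding time $\tau_\delta$. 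Well-posedness of the $\beta$-Dyson SDE \eqref{e:DBM1} for $\theta=\beta/2\geq1/2$ — strong existence, pathwise uniqueness, and a.s.\ non-collision, all classical for $\beta\geq1$ — then pins down the limit, and letting $\delta\to0$, using that the $\beta$-Dyson Brownian motion started from $\bmy(0)\in\bW^n$ never reaches $\partial\bW^n$ so that $\tau_\delta\to\infty$ a.s., upgrades this to convergence of $\bm\la^\e$ itself.

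The hard part is the behavior near the walls of $\bW^n$: the prelimit retains only the \emph{vanishing} hard gap $\e^{1/2}/n$ between consecutive particles, so collisions must be ruled out uniformly in $\e$, i.e.\ one needs a bound on $\bP(\tau_\delta^\e\leq T)$ that does not degenerate as $\e\to0$. My plan is to use the Lyapunov function $\Phi(\bm\la^\e)=-\sum_{i<j}\log(\la_i^\e-\la_j^\e)$, whose drift under the limiting generator satisfies $\cA\Phi\leq C$ \emph{precisely because} $\theta\geq1/2$ (this is the standard non-collision mechanism for $\beta$-Dyson Brownian motion: the singular term $\tfrac1{\beta n}\sum_{i\neq j}(y_i-y_j)^{-2}$ from the Laplacian is absorbed by $-\tfrac1n\sum_{i\neq j}(y_i-y_j)^{-2}$ from the drift once $\beta\geq1$), and to establish its prelimit counterpart $\cL^{n,\e}\Phi\leq C(1+\Phi)$ with $C$ uniform in $\e$. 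The delicate point is that finite differences of $\log$ are singular near the diagonal, but they are tamed by the hard gap $\e^{1/2}/n$, and the leading behavior of $\cL^{n,\e}\Phi$ reproduces $\cA\Phi$ with controlled error. Granting this, Gronwall and optional stopping give $\sup_\e\bE[\Phi(\bm\la^\e(t\wedge\tau_\delta^\e))]<\infty$, hence the desired uniform control of the collision time; the remaining tightness and identification steps are then routine.
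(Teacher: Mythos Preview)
The paper does not give its own proof of this theorem: it is quoted verbatim as \cite[Theorem 3.2]{MR3418747} (Gorin--Shkolnikov), with the sentence ``It is proven in \cite[Theorem 3.2]{MR3418747} that the Markov process $\bm\la(t)$ converges in the diffusive scaling limit to the $\beta$-Dyson Brownian motion'' immediately preceding the statement. So there is no in-paper argument to compare your proposal against.

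That said, your martingale-problem strategy is the natural one and your generator computation is correct: the identities $x_i-x_j=\theta n\e^{-1/2}(\la_i^\e-\la_j^\e)+(j-i)\theta$, the expansion of the product, the Taylor expansion of $f$, and the cancellation of the $\e^{-1/2}$ drift all check out and give $\cL^{n,\e}f\to\cA f$ uniformly on compacta of $\bW^n$. One point to be careful about in the Lyapunov step: in $\bm\la^\e$-coordinates the minimal admissible gap is not $\e^{1/2}/n$ but zero (the state $\la_i=\la_{i+1}$, i.e.\ $x_i-x_{i+1}=\theta$, is allowed in $\bW^n_\theta$), so $\Phi=-\sum_{i<j}\log(\la_i^\e-\la_j^\e)$ can be $+\infty$ on the state space and $\cL^{n,\e}\Phi$ is not globally finite. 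This is harmless for the stopped process since jumps change each gap by exactly $\e^{1/2}/(\theta n)$ and hence at $\tau_\delta^\e$ the minimum gap lies in $(\delta-\e^{1/2}/(\theta n),\delta]$, but you should state the Lyapunov inequality only on $\{\min_i(\la_i^\e-\la_{i+1}^\e)>\delta\}$ and run the optional-stopping argument there. With that adjustment your outline is sound; the original proof in \cite{MR3418747} proceeds along essentially the same lines.
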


\subsection{Notations}
Throughout this paper, we use the following notations:

We denote $\bR$ the set of real numbers, $\bC$ the set of complex numbers, $\bH=\bC_+$ the set of complex numbers with positive imaginary parts, $\bC_-$ the set of complex numbers with negative imaginary parts, $\bZ$ the set of integers, and $\bZ_{\geq 0}$ the set of non-negative integers. 

We denote $M_1(\bR)$ the space of probability measures on $\bR$ equipped with the weak topology. A metric compatible  with the weak topology is the \emph{L{\'e}vy metric} defined by
\begin{align*}
{\rm{dist}}(\mu,\nu)\deq\inf_{\varepsilon}\{\mu(-\infty, x-\varepsilon)-\varepsilon\leq \nu(-\infty, x)\leq \mu(-\infty, x+\varepsilon)+\varepsilon \text{ for all $x$}\}.
\end{align*}

Let $(\bM,\dist(\cdot,\cdot))$ be a metric space, either $\bC^m$ or $\bR^m$ with the Euclidean metric or $M_1(\bR)$ with the L{\'e}vy metric. The set of c{\'a}dl{\'a}g functions, i.e. functions which are right continuous with left limits,  from $[0,T]$ to $\bM$ is denoted by $D([0,T],\bM)$ and is called the \emph{Skorokhod space}. Let $\Lambda$ denote the set of all strictly increasing, continuous bijections from $[0,T]$ to $[0,T]$. The \emph{Skorokhod metric} on $D([0,T],\bM)$ is defined by
\begin{align*}
\dist(f,g)=\inf_{\la \in \Lambda}\max\left\{\sup_{0\leq t\leq T}|\la(t)-t|, \sup_{0\leq t\leq T}\dist(f(t), g(\la(t)))\right\}.
\end{align*}
Let $Z^n$, $Z$ be random variables taking value in the Skorokhod space $D([0,T],\bM)$. We say $Z_n$  converges \emph{almost surely} towards $Z$,  if $Z_n\rightarrow Z$ in $D([0,T], \bM)$ for the Skorokhod metric almost surely. We say $Z_n$ \emph{weakly converges} towards $Z$, denoted by $Z^n\Rightarrow Z$, if for all bounded Skorokhod continuous functions $f$,
\begin{align*}
\lim_{n\rightarrow \infty}\bE[f(Z^n)]\rightarrow \bE[f(Z)].
\end{align*}
We refer to \cite[Chapter 1]{MR1431297} and \cite[Chapter 3]{MR1876437} for nice presentations on weak convergence of stochastic processes in the Skorokhod space.

A \emph{random field} is a collection of random variables indexed by elements in a topological space. If the collection of random variables are jointly Gaussian, we call it a \emph{Gaussian random field}. Let $(g^n(z))_{z\in \Omega}$, $(g(z))_{z\in \Omega}$ be $\bC$-valued random fields indexed by an open subset $\Omega\subset \bC\setminus\bR$. We say $(g^n(z))_{z\in \Omega}$ weakly converges towards $(g(z))_{z\in \Omega}$ in the sense of finite dimensional distributions, if for any $z_1,z_2,\cdots, z_m\in \Omega$ the random vector $(g^n(z_j))_{1\leq j\leq m}$ weakly converges to $(g(z_j))_{1\leq j\leq m}$. Let $\{(g_t^n(z))_{z\in \Omega}\}_{0\leq t\leq T}$, $\{(g_t(z))_{z\in \Omega}\}_{0\leq t\leq T}$ be random field valued random processes. We say $\{(g_t^n(z))_{z\in \Omega}\}_{0\leq t\leq T}$ weakly converges towards $\{(g_t(z))_{z\in \Omega}\}_{0\leq t\leq T}$ in the sense of finite dimensional processes, if for any $z_1,z_2,\cdots, z_m\in \Omega$ the random process $\{(g_t^n(z_j))_{1\leq j\leq m}\}_{0\leq t\leq T}$ weakly converges to $\{(g_t(z_j))_{1\leq j\leq m}\}_{0\leq t\leq T}$ in the Skorokhod space $D([0,T], \bC^m)$. 

\subsection{Main results}

In this paper, we study the asymptotic behavior of the $\beta$-nonintersecting Poisson random walks, as the number of particles $n$ goes to infinity.

We consider $\beta$-nonintersecting Poisson random walks $\bmx(t)=(x_1(t), x_2(t),\cdots, x_n(t))$, with initial data $\bmx(0)=(x_1(0), x_2(0),\cdots, x_n(0))$. We define the empirical measure process 
\begin{align}\label{e:defemp}
\mu^n_t=\frac{1}{n}\sum_{i=1}^n \delta_{x_i(t)/\theta n},
\end{align}
which can be viewed as a random element in $D([0,T], M_1(\bR))$, the space of  right-continuous with left limits processes from $[0,T]$ into the space $M_1(\bR)$ of probability measures on $\bR$.

The law of large numbers theorem states that the empirical measure process $\{\mu_t^n\}_{0\leq t\leq T}$ converges in $D([0,T], M_1(\bR))$. We need to assume that the initial empirical measure $\mu_0^n$ converges in the L{\'e}vy metric as $n$ goes to infinity in $M_1(\bR)$.

We denote the Stieltjes transform of the empirical measure at time $t$ as
\begin{align}\label{e:defmt}
m^n_t(z)=\frac{1}{n}\sum_{i=1}^{n}\frac{1}{x_i(t)/\theta n-z}=\int \frac{\rd \mu^n_t(x)}{x-z},
\end{align}
where $z\in \bC\setminus \bR$. 

\begin{theorem}\label{t:LLN}
Fix $\theta>0$. We assume that there exists a measure $\mu_0\in M_1(\bR)$, such that the initial empirical measure $\mu_0^n$ converges in the L{\'e}vy metric as $n$ goes to infinity towards $\mu_0$ almost surely (in probability).
Then, for any fixed time $T>0$, $\{\mu^n_t\}_{0\leq t\leq T}$ converges as $n$ goes to infinity  in $D([0,T], M_1(\bR))$ almost surely (in probability). Its limit is the unique measure-valued process $\{\mu_t\}_{0\leq t\leq T}$, so that the density satisfies $0\leq \rd \mu_t(x)/\rd x\leq 1$, and its Stieltjes transform 
\begin{align}\label{e:defmt}
 m_t(z)=\int \frac{\rd \mu_t(x)}{x-z},
\end{align} 
satisfies the equation
\begin{align}\label{e:limitMST}
m_t(z)=m_0(z)-\int_0^t e^{- m_s(z)}\del_z m_s(z)\rd s,
\end{align}
for $z\in \bC\setminus \bR$.
\end{theorem}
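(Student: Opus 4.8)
The plan is to run a Dynkin–martingale argument at the level of the Stieltjes transform, extracting a dynamical Nekrasov identity from the integrable structure of $\cL^n_\theta$. Fix $z\in\bC\setminus\bR$ and regard $m^n(\bmx)\deq\frac1n\sum_i\bigl(x_i/\theta n-z\bigr)^{-1}$ as a bounded function of the configuration, so that
\[
M^n_t(z)\deq m^n_t(z)-m^n_0(z)-\int_0^t(\cL^n_\theta m^n)(\bmx(s))\,\rd s
\]
is a martingale (Dynkin's formula, valid since $m^n(\cdot)$ is bounded and the jump rates are finite). The first and main algebraic step is to evaluate $(\cL^n_\theta m^n)(\bmx)$ in closed form. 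Writing $P(w)\deq\prod_{j=1}^n(w-x_j)$ and using the elementary identity $\prod_{j\neq i}\frac{x_i-x_j+\theta}{x_i-x_j}=\frac{P(x_i+\theta)}{\theta\,P'(x_i)}$, one recognizes $(\cL^n_\theta m^n)(\bmx)$ as $-\theta n$ times the sum of the residues at $w=x_1,\dots,x_n$ of $\frac{P(w+\theta)}{P(w)(w-\theta n z)(w+1-\theta n z)}$. This rational function decays like $w^{-2}$ at infinity, so its residues sum to zero; evaluating instead the residues at the only other poles $w=\theta n z$ and $w=\theta n z-1$ gives
\[
(\cL^n_\theta m^n)(\bmx)=\theta n\left(\frac{P(\theta n z+\theta)}{P(\theta n z)}-\frac{P(\theta n z-1+\theta)}{P(\theta n z-1)}\right).
\]
Since $\frac{P(\theta n z+\theta)}{P(\theta n z)}=\prod_j\bigl(1+\tfrac1{n(z-x_j/\theta n)}\bigr)=\exp\bigl(-m^n_t(z)+O(n^{-1})\bigr)$ and the second term is this same expression evaluated at $z-\tfrac1{\theta n}$, a Taylor expansion in the shift $\tfrac1{\theta n}$ yields
\[
(\cL^n_\theta m^n)(\bmx(t))=-e^{-m^n_t(z)}\,\partial_z m^n_t(z)+O(n^{-1}),
\]
all $O(n^{-1})$ being uniform for $z$ in compact subsets of $\bC\setminus\bR$ (using $|\partial_z^k m^n_t(z)|\le k!/|\Im z|^{k+1}$).

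Next I would dispose of the martingale. Its predictable bracket is $\int_0^t\sum_i(\text{rate of }i)\,\bigl|m^n(\bmx(s)+\bme_i)-m^n(\bmx(s))\bigr|^2\,\rd s$; the increments satisfy $|m^n(\bmx+\bme_i)-m^n(\bmx)|\le(\theta n^2|\Im z|^2)^{-1}$, while the total jump rate $\sum_i\theta n\prod_{j\neq i}\frac{x_i-x_j+\theta}{x_i-x_j}=n\sum_i\operatorname{Res}_{x_i}\frac{P(w+\theta)}{P(w)}$ equals $\theta n^2$ (the residue at infinity of $P(w+\theta)/P(w)$). Hence the bracket is $O(n^{-2})$, so $\bE\sup_{t\le T}|M^n_t(z)|^2=O(n^{-2})$ and $M^n_\cdot(z)\to0$ uniformly on $[0,T]$ in $L^2$, hence in probability, and (by Borel–Cantelli over a countable dense set of $z$, with equicontinuity in $z$) almost surely. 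The same computation shows that $|\bmla(t)|-|\bmla(0)|=\sum_i(x_i(t)-x_i(0))$ is a Poisson variable of parameter $\theta n^2 t$, which I would use for a priori control of the support: particles only jump rightward, so $\mu^n_t(-\infty,a]\le\mu^n_0(-\infty,a]$ prevents escape to $-\infty$, and the fact that $\sum_i(x_i(t)-x_i(0))$ is of size $\Theta(n^2 t)$ uniformly in $t\le T$ with overwhelming probability prevents escape to $+\infty$; this yields compact containment of $\{\mu^n_t\}_{0\le t\le T}$. Combined with tightness of $t\mapsto m^n_t(z)$ in $D([0,T],\bC)$ — immediate since its drift is uniformly Lipschitz in $t$ by the bound above and its martingale part is negligible — this gives tightness of $\{\mu^n_\cdot\}$ in $D([0,T],M_1(\bR))$.

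It then remains to characterize the subsequential limits. Along a converging subsequence, $m^n_s(z)\to m_s(z)$ for all $s\in[0,T]$ and $z\in\bC\setminus\bR$; as the $m^n_s(\cdot)$ are holomorphic and uniformly bounded on $\bC\setminus\bR$, Montel's theorem promotes this to locally uniform convergence, so $\partial_z m^n_s\to\partial_z m_s$ and $e^{-m^n_s}\partial_z m^n_s\to e^{-m_s}\partial_z m_s$; passing to the limit in the Dynkin decomposition (the error term and $M^n_\cdot(z)$ vanish, dominated convergence handles the time integral) shows that $m_\cdot(z)$ solves \eqref{e:limitMST}, while the spacings $x_i-x_{i+1}\ge\theta$ in $\bW^n_\theta$ pass to the density bound $0\le\rd\mu_t/\rd x\le1$. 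The proof is completed by the uniqueness of such a solution, and this I expect to be the principal difficulty: \eqref{e:limitMST} is the quasilinear equation $\partial_t m_t(z)+e^{-m_t(z)}\partial_z m_t(z)=0$, which the method of characteristics solves via $\dot z(t)=e^{-m_t(z(t))}$ — along which $m$ is conserved — giving the implicit relation $m_t(z)=m_0\!\bigl(z-t\,e^{-m_t(z)}\bigr)$, i.e. the quantized free convolution of \cite{MR3361772}. The constraint $0\le\rd\mu_t/\rd x\le1$ is precisely what guarantees that the complex characteristics issued from $\bC_+$ do not collide for $t\in[0,T]$, so that this relation determines $m_t$ uniquely; alternatively one can close a Grönwall estimate for a weighted supremum of $m_t(z)-\tilde m_t(z)$ over $\bC\setminus\bR$, absorbing the apparent loss of one $z$-derivative through Cauchy's estimate. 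Since every subsequential limit therefore agrees with this unique measure-valued process, $\mu^n_\cdot$ converges, which is the assertion of the theorem.
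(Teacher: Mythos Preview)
Your proposal is correct and shares the algebraic core with the paper --- the Nekrasov-type identity $(\cL^n_\theta m^n)(\bmx)=\theta n\bigl(\tfrac{P(\theta nz+\theta)}{P(\theta nz)}-\tfrac{P(\theta nz-1+\theta)}{P(\theta nz-1)}\bigr)$ and the fact that the total jump rate is $\theta n^2$ --- though you obtain both by a residue argument whereas the paper proves them via an antisymmetry trick on the Vandermonde (its Lemma~2.6 and Corollary~2.7). The substantive organizational difference is that the paper writes the Dynkin decomposition \emph{along the characteristic lines} $z_t(z)=z+te^{-m_0(z)}$ of the limiting equation: since $m_t(z_t)=m_0(z)$ is constant along these lines, the simplified SDE reads $m^n_t(z_t)=m^n_0(z)+\int_0^t\partial_z m^n_s(z_s)\bigl(e^{-m_s(z_s)}-e^{-m^n_s(z_s)}\bigr)\,\rd s+O(1/n)+M^n_t(z)$, and Gr\"onwall gives $\sup_{t\le T}|m^n_t(z_t)-m_0(z)|\to0$ directly. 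This sidesteps both the tightness argument in $D([0,T],M_1(\bR))$ and the uniqueness discussion for the limiting PDE that occupy the second half of your sketch --- the limit is explicit, so there is nothing to identify. Your fixed-$z$ route (tightness, then subsequential limits solve \eqref{e:limitMST}, then uniqueness via non-crossing of complex characteristics) is the more standard PDE-style argument and is arguably more self-contained on the functional-analytic side; the paper's route is shorter because the choice of moving frame $z_t$ trivializes the limiting dynamics.
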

\begin{remark}
Assumption in Theorem \ref{t:LLN} is equivalent to that the Stieltjes transform of the initial empirical measure 
\begin{align*}
\lim_{n\rightarrow \infty} m^n_0(z)=\int \frac{\rd\mu_0(x)}{x-z}\eqd m_0(z),
\end{align*}
almost surely (in probability), for any $z\in \bC\setminus \bR$.
\end{remark}

The Stieltjes transform of  $ \mu_t$ is characterized by \eqref{e:limitMST},
\begin{align}\label{e:dif1}
\del_t m_t(z)=-e^{-m_t(z)}\del_z  m_t(z)=\del_z (e^{- m_t(z)}).
\end{align} 
This is a complex Burgers type equation, and can be solved by the method of characteristics. We define the characteristic lines,
\begin{align}\label{e:dif2}
\del_t  z_t(z)=e^{- m_t( z_t(z))}, \quad z_0(z)=z.
\end{align}
If the context is clear, we omit the parameter $z$, i.e. we simply write $z_t$ instead of $z_t(z)$. Plugging \eqref{e:dif2} into \eqref{e:dif1}, and applying the chain rule we obtain 
$
\del_t  m_t(z_t)=0.
$
It implies that $ m_t(z)$ is a constant along the characteristic lines, i.e. $m_t(z_t(z))=m_0(z_0(z))=m_0(z)$. And the solution of the differential equation \eqref{e:dif2} is given by
\begin{align} \label{e:deft}
z_t(z)=z+te^{-m_0(z)}, \quad 0\leq t< -\frac{\Im[z]}{\Im[e^{-m_0(z)}]}\eqd \ft(z).
\end{align}
We conclude that the Stieltjes transform $m_t(z)$ is given by
\begin{align}\label{e:tmtrelation}
m_t(z+te^{-m_0(z)})=m_0(z).
\end{align}
Later we will prove that for any time $t\geq 0$, there exists an open set $\Omega_t\subset\bC\setminus \bR$ defined in \eqref{e:defOmega}, such that   $z_t(z)=z+te^{-m_0(z)}$ is conformal from $\Omega_t$ to $\bC\setminus \bR$, and is a homeomorphism from the closure of $\Omega_t\cap \bC_+$ to $\bC_+\cup \bR$, and from the closure of $\Omega_t\cap \bC_-$ to $\bC_-\cup \bR$. 
 
The central limit theorem states that the rescaled empirical measure process $\{n(\mu_t^n-\mu_t)\}_{0\leq t\leq T}$ weakly converges in the space of distributions acting on analytic test functions to a Gaussian process. We need to assume that the rescaled initial empirical measure $n(\mu_0^n-\mu_0)$ weakly converges to a measure.

We define the rescaled fluctuation process
\begin{align}\label{e:defgt}
g_t^n(z)=n(m_t^n(z)- m_t(z))=n\int \frac{\rd (\mu_t^n(x)- \mu_t(x))}{x-z},
\end{align}
which characterizes the behaviors of the rescaled empirical measure process $\{n(\mu_t^n-\mu_t)\}_{0\leq t\leq T}$.

\begin{assumption}\label{a:initiallaw}
We assume there exists a constant $\fa$, such that that for any $z\in \bC\setminus\bR$, 
\begin{align*}
\bE\left[|g_0^n(z)|^2\right]\leq \fa(\Im[z])^{-2},
\end{align*}
and the random field $(g_0^n(z))_{z\in \bC\setminus\bR}$ weakly converges to a deterministic field $(g_0(z))_{z\in \bC\setminus \bR}$, in the sense of finite dimensional distributions.
\end{assumption}

\begin{remark}
Assumption \ref{a:initiallaw} implies that the initial empirical measure $\mu_0^n$ converges in the L{\'e}vy metric as $n$ goes to infinity towards $\mu_0$ in probability.
\end{remark}

\begin{theorem}\label{t:CLT}
Fix $\theta>0$. We assume Assumption \ref{a:initiallaw}. Then for any fixed time $T>0$,  the process $\{(g^n_t(z_t(z)))_{z\in\Omega_T}\}_{0\leq t\leq T}$ converges weakly towards a Gaussian process $\{(g_t(z_t(z)))_{z\in \Omega_T}\}_{0\leq t\leq T}$, in the sense of finite dimensional processes, with initial data $(g_0(z))_{z\in \bC\setminus\bR}$ given in Assumption \ref{a:initiallaw}, means 
\begin{align}\label{e:mean0}
\bE[g_t(z_t(z))]&=\mu(t, z)\deq\frac{g_0(z)}{1-t\del_zm_0(z)e^{-m_0(z)}}
+\left(\frac{1}{2}-\frac{1}{2\theta}\right)\frac{t((\del_z m_0(z))^2-\del_z^2 m_0(z))e^{-m_0(z)}}{(1-t\del_z m_0(z)e^{-m_0(z)})^2},
\end{align}
and covariances
\begin{align}
\begin{split}\label{e:variance0}
\cov[g_s(z_s(z)), g_t(z_t(z'))]&=\sigma(s, z, t, z')\deq\frac{1}{\theta}\frac{1}{(1-s\del_zm_0(z)e^{-m_0(z)})(1-t\del_zm_0(z')e^{-m_0(z')})}\\
&\times\left(\frac{1}{(z-z')^2}-\frac{(1-(s\wedge t)\del_zm_0(z)e^{-m_0(z)})(1-(s\wedge t)\del_zm_0(z')e^{-m_0(z')})}{(z-z'+(s\wedge t)(e^{-m_0(z)}-e^{-m_0(z')}))^2}\right)\\
\cov[g_s(z_s(z)), \overline{g_t(z_t(z'))}]&=\sigma(s, z,t,\bar z'),
\end{split}
\end{align}
where 
\begin{align*}
\sigma(s, z,t,z)\deq \lim_{z'\rightarrow z}\sigma(s,z,t,z')
%=\frac{t(3t(\del_z^2(e^{-m_0(z)}))^3-2\del_z^3(e^{-m_0(z)})(1+t\del_z(e^{-m_0(z)})))}{12\theta(1-t\del_z m_0(z)e^{-m_0(z)})^4}\\
&=\frac{(s\wedge t)e^{-m_0(z)}(2(\del_z m_0(z))^3-6\del_z m_0(z)\del_z^2 m_0(z)+2\del_z^3m_0(z))}{12\theta(1-(s\wedge t)\del_z m_0(z)e^{-m_0(z)})^3(1-(s\vee t)\del_z m_0(z)e^{-m_0(z)})}\\
&+\frac{(s\wedge t)^2e^{-2m_0(z)}((\del_z m_0(z))^4+3(\del_z^2 m_0(z))^2-2\del_z m_0(z)\del_z^3m_0(z))}{12\theta(1-(s\wedge t)\del_z m_0(z)e^{-m_0(z)})^3(1-(s\vee t)\del_z m_0(z)e^{-m_0(z)})}.
\end{align*}
\end{theorem}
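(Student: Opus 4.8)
\emph{Proof plan.} The backbone is a dynamical version of the Nekrasov equation (cf.\ \cite[Section 4]{MR3668648}) for the Stieltjes transform $m^n_t(z)$. Writing $y_i=x_i(t)/\theta n$ and $h=1/\theta n$, one applies the generator \eqref{e:generator} to the observables $\bmx\mapsto m^n(z)$ and $\bmx\mapsto m^n(z)m^n(z')$ and rewrites the jump rates through the residue identity $\theta n\prod_{j\neq i}\frac{x_i-x_j+\theta}{x_i-x_j}=\theta n^2\,\mathrm{Res}_{w=y_i}\Phi^n_t(w)$, where $\Phi^n_t(w)=\prod_{j=1}^n\frac{w-y_j+\theta h}{w-y_j}$ is a rational function of $w$. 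Collapsing the sum of residues of $\Phi^n_t(w)\bigl(\frac{1}{w+h-z}-\frac{1}{w-z}\bigr)$, whose only poles besides the $y_i$ lie at $z$ and $z-h$, produces the exact identity $\cL^n_\theta m^n(z)=\theta n\bigl(\Phi^n_t(z)-\Phi^n_t(z-h)\bigr)$, and the analogous computation for $m^n(z)m^n(z')$ identifies the martingale bracket. Expanding in $h$ away from $\bR$, where $\Phi^n_t(w)=e^{-m^n_t(w)}+\frac{1}{2n}\del_w(e^{-m^n_t(w)})+O(n^{-2})$ and the finite differences are Taylor expanded, one obtains the It\^o-type decomposition
\begin{equation*}
m^n_t(z)=m^n_0(z)+\int_0^t\!\Bigl(\del_z\bigl(e^{-m^n_s(z)}\bigr)+\Bigl(\tfrac12-\tfrac1{2\theta}\Bigr)\tfrac1n\del_z^2\bigl(e^{-m^n_s(z)}\bigr)+\cR^n_s(z)\Bigr)\rd s+M^n_t(z),
\end{equation*}
where $\cR^n_s(z)$ is a remainder bounded by a power of $(\Im z)^{-1}$ times $n^{-2}$, and $M^n_t(z)$ is a martingale with predictable bracket
\begin{equation*}
\rd\langle nM^n(z),nM^n(z')\rangle_t=-\frac1\theta\,\del_z\del_{z'}\!\biggl[\frac{\Phi^n_t(z)-\Phi^n_t(z')}{z-z'}\biggr]\rd t+O(n^{-1})\,\rd t,
\end{equation*}
the analogue for $\langle nM^n(z),\overline{nM^n(z')}\rangle_t$ following by conjugating $z'$; the jumps of $nM^n_t(z)$ are $O(n^{-1}(\Im z)^{-2})$.

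The next step collects the quantitative inputs. From the decomposition above, the Burkholder--Davis--Gundy inequality, Theorem \ref{t:LLN}, Gr\"onwall's inequality, and a coupling of the $\beta$-nonintersecting Poisson random walk with a copy run from a perturbed configuration --- which supplies the stability estimates that here substitute for rigidity bounds --- one would establish the uniform a priori bound $\bE\bigl[\sup_{0\le t\le T}|g^n_t(z)|^2\bigr]\le C(\Im z,T)$, together with (via Cauchy estimates at a slightly larger imaginary part) the same control for finitely many $z$-derivatives of $g^n_t$. One also records the properties of the characteristic flow $z_t(z)=z+te^{-m_0(z)}$ of \eqref{e:deft}: for each $t\le T$ it is conformal from the open set $\Omega_t$ of \eqref{e:defOmega} onto $\bC\setminus\bR$, the Jacobian $\del_z z_t(z)=1-t\del_z m_0(z)e^{-m_0(z)}$ does not vanish on $\Omega_t$, and along the flow $m_t(z_t(z))=m_0(z)$, $e^{-m_t(z_t(z))}=e^{-m_0(z)}$, $\del_t z_t(z)=e^{-m_0(z)}$, and $(\del_z m_t)(z_t(z))=\del_z m_0(z)/\del_z z_t(z)$; the density bound $0\le\rd\mu_t/\rd x\le1$ of Theorem \ref{t:LLN} enters here.

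Now subtract the limiting equation \eqref{e:limitMST}, linearize via $e^{-m^n_s(z)}=e^{-m_s(z)}\bigl(1-\tfrac1n g^n_s(z)+O(n^{-2}|g^n_s(z)|^2)\bigr)$, and transport the resulting linear operator along the characteristics; because $\del_t z_t(z)=e^{-m_t(z_t(z))}$, the transport part cancels exactly, leaving for $z\in\Omega_T$
\begin{equation*}
\del_z z_t(z)\,g^n_t(z_t(z))=g^n_0(z)+\int_0^t\del_z z_s(z)\,c_s(z_s(z))\,\rd s+\cE^n_t(z)+\cM^n_t(z),
\end{equation*}
with $c_s(z)=\bigl(\tfrac12-\tfrac1{2\theta}\bigr)\del_z^2(e^{-m_s(z)})$, $\cM^n_t(z)=\int_0^t\del_z z_s(z)\,n\,\rd M^n_s(z_s(z))$ a centered martingale, and $\bE[\sup_{0\le t\le T}|\cE^n_t(z)|]\to0$ from the a priori bounds. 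The flow identities give $\del_z z_s(z)c_s(z_s(z))=\bigl(\tfrac12-\tfrac1{2\theta}\bigr)e^{-m_0(z)}\bigl((\del_z m_0(z))^2-\del_z^2 m_0(z)\bigr)(\del_z z_s(z))^{-2}$ and $\int_0^t(\del_z z_s(z))^{-2}\rd s=t/\del_z z_t(z)$; dividing through by $\del_z z_t(z)$ reproduces the second term of \eqref{e:mean0}, the first term being $g_0(z)/\del_z z_t(z)$ since $g^n_0\Rightarrow g_0$ is deterministic and $\cM^n$ is centered. For the covariance, the predictable bracket of $\cM^n$ converges to $-\tfrac1\theta\int_0^{s\wedge t}\del_z z_r(z)\,\del_z z_r(z')\,\del_w\del_{w'}\bigl[\frac{e^{-m_r(w)}-e^{-m_r(w')}}{w-w'}\bigr]\big|_{w=z_r(z),\,w'=z_r(z')}\rd r$; the elementary identity $\del_w\del_{w'}\frac{F(w)-F(w')}{w-w'}=\frac{F'(w)+F'(w')}{(w-w')^2}-\frac{2(F(w)-F(w'))}{(w-w')^3}$ with $F=e^{-m_r}$, combined with the flow identities, makes the integrand equal to $-\frac{\rd}{\rd r}\bigl[\del_z z_r(z)\del_z z_r(z')(z_r(z)-z_r(z'))^{-2}\bigr]$, so the integral telescopes to $(z-z')^{-2}-\del_z z_{s\wedge t}(z)\del_z z_{s\wedge t}(z')(z_{s\wedge t}(z)-z_{s\wedge t}(z'))^{-2}$; dividing by $\del_z z_s(z)\del_z z_t(z')$ gives exactly $\sigma(s,z,t,z')$ of \eqref{e:variance0}, conjugating $z'$ gives the $\cov[g_s,\overline{g_t}]$ formula, and the limit $z'\to z$ (a second-order expansion of $\sigma$) gives the diagonal $\sigma(s,z,t,z)$.

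Since the predictable bracket of $\cM^n$ converges in probability, uniformly on $[0,T]$, to the deterministic limit above and the jumps of $\cM^n$ are uniformly $O(n^{-1})$, the martingale central limit theorem yields joint convergence of $(\cM^n_t(z_1),\dots,\cM^n_t(z_m))_{0\le t\le T}$ to a centered Gaussian martingale with that covariance in $D([0,T],\bC^m)$; together with $g^n_0\Rightarrow g_0$ (Assumption \ref{a:initiallaw}) and $\cE^n\to0$, this gives the claimed weak convergence of $\{(g^n_t(z_t(z)))_{z\in\Omega_T}\}_{0\le t\le T}$ in the sense of finite-dimensional processes, with the stated mean \eqref{e:mean0} and covariance \eqref{e:variance0}. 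I expect the main difficulty to be the a priori estimate: the remainder $\cR^n_s$, and more seriously the linearization error $\cE^n$, involve $g^n_s$ and its $z$-derivatives, so the bound $\bE[\sup_t|g^n_t(z)|^2]\le C(\Im z,T)$ has to be produced by a self-improving Gr\"onwall argument that propagates control of $g^n$ simultaneously over a hierarchy of imaginary parts --- and this is precisely where the coupling of Poisson random walks does the essential work. Establishing the conformality of $z_t:\Omega_t\to\bC\setminus\bR$ and justifying the $h$-expansion of $\Phi^n_t$ near $\bR$ (handled by keeping $\Phi^n_t$ rational and expanding only after the residue calculus removes the poles on the real axis) are secondary technical points.
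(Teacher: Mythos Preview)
Your overall architecture matches the paper's: derive the exact Nekrasov-type identity $\cL^n_\theta m^n(z)=\theta n(\Phi^n_t(z)-\Phi^n_t(z-h))$, Taylor expand, transport along the characteristics $z_t(z)=z+te^{-m_0(z)}$, apply a martingale CLT, and then solve the limiting linear SDE explicitly to read off $\mu(t,z)$ and $\sigma(s,z,t,z')$. Your telescoping computation for the covariance, via $\del_w\del_{w'}\frac{F(w)-F(w')}{w-w'}$ and the identity $-\tfrac{\rd}{\rd r}\bigl[\del_z z_r(z)\del_z z_r(z')(z_r(z)-z_r(z'))^{-2}\bigr]$, is exactly what the paper does when it integrates the bracket against $(1-u\del_z m_0 e^{-m_0})$ factors. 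One technical difference: you compute the \emph{predictable} bracket $\langle nM^n(z),nM^n(z')\rangle$ from the generator applied to $m^n(z)m^n(z')$, whereas the paper works with the \emph{quadratic variation} $[nM^n(z),nM^n(z')]$ as a sum over jumps and then telescopes that sum into an integral via the stopping times of the total jump count. Either route is legitimate and both converge to the same limit.

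There is one genuine misattribution. You write that the a priori bound $\bE[\sup_{0\le t\le T}|g^n_t(z)|^2]\le C(\Im z,T)$ requires ``a coupling of the $\beta$-nonintersecting Poisson random walk with a copy run from a perturbed configuration'' and call this ``precisely where the coupling of Poisson random walks does the essential work.'' In the paper this bound needs no coupling at all: it comes directly from the crude estimates $|\del_z m^n_s(z_s)|,|\del_z^2 m^n_s(z_s)|\le C(\Im z_T)^{-k}$, the BDG bound $\bE[\sup_t|nM^n_t(z)|^p]^{1/p}\le C T^{1/2}p^{3/2}$, and a single Gr\"onwall step applied to the SDE for $g^n_t(z_t)$. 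No ``self-improving Gr\"onwall over a hierarchy of imaginary parts'' is needed, because the characteristics keep $|\Im z_s|\ge|\Im z_T|>0$ on $\Omega_T$, so all the $(\Im z)^{-k}$ constants are uniform in $s\in[0,T]$. The coupling argument in the paper is used for a different purpose: to bound the \emph{extreme particles} $x_1(t)\le \fc n$ with exponentially high probability, which is needed only for the contour-integral upgrade to analytic test functions (Theorem~\ref{t:CLT2}), not for Theorem~\ref{t:CLT} itself. So your plan is correct but you have overestimated the difficulty of the a priori step and placed the coupling in the wrong part of the argument.
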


%\begin{assumption}
%We assume that the initial empirical measure 
%\begin{align}
%\mu_0=\frac{1}{n}\sum_{i=1}^n \delta_{x_i(0)/\theta n},
%\end{align}
%converges weakly as $n$ goes to infinity towards a measure $\mu\in M_1(\bR)$.
%\end{assumption}

\begin{remark}
We can rewrite the means and covariances in terms of the characteristic lines $z_t(z)$:
\begin{align}\begin{split}\label{e:meanandvar}
\mu(t,z)&=\frac{g_0(z)}{\del_z z_t(z)}+\left(\frac{1}{2}-\frac{1}{2\theta}\right)\frac{\del_z^2 z_t(z)}{(\del_z z_t(z))^2},\\
\sigma(s, z, t,z')&=\frac{1}{\theta}\frac{1}{\del_zz_s(z)\del_zz_t(z')}\left(\frac{1}{(z-z')^2}-\frac{\del_z z_{s\wedge t}(z)\del_z z_{s\wedge t}(z')}{(z_{s\wedge t}(z)-z_{s\wedge t}(z'))^2}\right)\\
&=\frac{1}{\theta}\frac{1}{\del_zz_s(z)\del_zz_t(z')}\del_z\del_{z'}\log \left(\frac{z-z'}{z_{s\wedge t}(z)-z_{s\wedge t}(z')}\right).
\end{split}\end{align}
We will prove in Section \ref{s:compareDBM}, the means and the covariances \eqref{e:meanandvar} are universal, and coincide with those of $\beta$-Dyson Brownian motions with initial data constructed by the Markov-Krein correspondence.
\end{remark}

To study the fluctuation of the rescaled empirical measure process $\{n(\mu_t^n-\mu_t)\}_{0\leq t\leq T}$ with analytic functions as test functions, we need to assume that the extreme particles are bounded. 

\begin{assumption}\label{a:ibound}
We assume there exists a large number $\fb$, such that 
\begin{align}\label{e:ibound}
\fb n\geq x_1(0)\geq x_2(0)\geq \cdots \geq x_n(0).
\end{align}
\end{assumption}

\begin{theorem}\label{t:CLT2}
Fix $\theta>0$. We assume Assumptions \ref{a:initiallaw} and \ref{a:ibound}. Then for any fixed time $T>0$ and real analytic functions $f_1, f_2, \cdots, f_m$ on $\bR$, the random process 
\begin{align*}
\left\{\left(n\int f_j(x)\rd(\mu_{t}^{n}(x)-\mu_{t}(x))\right)_{1\leq j\leq m}\right\}_{0\leq t\leq T},
\end{align*} 
converges as $n$ goes to infinity in $D([0,T], \bR^m)$ weakly towards a Gaussian process $\{(\cF_j(t))_{1\leq j\leq m}\}_{0\leq t\leq T}$, with means and covariances
\begin{align*}
\bE[\cF_j(t)]&=\frac{1}{2\pi\ri}\oint_\cC \mu(t, z_t^{-1}(w))f_j(w)\rd w,\\
\cov[\cF_j(s), \cF_k(t)]&=-\frac{1}{4\pi^2}\oint_{\cC} \oint_{\cC} \sigma(s, z_s^{-1}(w),t, z_t^{-1}(w'))f_j(w)f_k(w')\rd w\rd w',
\end{align*}
where the contours are sufficiently large depending on $\fb$.
\end{theorem}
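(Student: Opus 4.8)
The idea is to derive Theorem~\ref{t:CLT2} from Theorem~\ref{t:CLT} by writing each linear statistic as a contour integral of the fluctuation field $g^n_t$ and passing to the limit under the integral sign. Fix real analytic $f_1,\dots,f_m$, assumed to extend holomorphically to a neighborhood of a large disc. The first ingredient is a confinement estimate: under Assumption~\ref{a:ibound} there is $\fc=\fc(\fb,\theta,T)$ so that with probability tending to $1$ every particle obeys $x_i(t)/\theta n\in[0,\fc]$ for all $t\in[0,T]$, and $\supp\mu_t\subset[0,\fc]$ for all $t\in[0,T]$. The lower bound $x_i(t)\geq0$ is automatic from $\bmla(t)\in\bY^n$; the upper bound uses the rate estimate that particle $i$ jumps at rate at most $\theta n+Cn^{2}/(x_i-x_{i+1})$, so that a particle which has pulled ahead by more than $O(n)$ is essentially frozen, combined with the coupling technique for Poisson random walks (monotonicity in the initial data together with the explicitly solvable zero initial data case, Theorem~\ref{t:density}) to control how far the leading particle can travel by time $T$; the confinement of $\supp\mu_t$ is read off from the characteristic representation \eqref{e:deft}, $\supp\mu_0$ being bounded by Assumption~\ref{a:ibound}. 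The complementary event has probability $o(1)$ and its contribution is discarded by a standard truncation, so below I work on the confinement event, and I take $\cC$ to be a fixed circle of radius larger than $\fc$ on and inside which the $f_j$ are holomorphic.

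On that event Cauchy's formula gives
\[
n\!\int\! f_j(x)\,\rd(\mu^n_t(x)-\mu_t(x))=\mp\frac{1}{2\pi\ri}\oint_{\cC}f_j(w)\,g^n_t(w)\,\rd w,
\]
the orientation being chosen to match the statement. Next I change variables $w=z_t(z)$ along the characteristic flow of \eqref{e:deft}. Since $g^n_t(\cdot)$ is holomorphic on $\bC\setminus[0,\fc]$ on the confinement event and $z_t$ is conformal near $\infty$ and on $\Omega_T$, the contour $z_t^{-1}(\cC)$ can be deformed --- uniformly in $t\in[0,T]$, using only analyticity --- to a single fixed contour $\gamma\subset\Omega_T$ chosen so that $z_t(\gamma)$ encircles $[0,\fc]$ for every $t\leq T$; this yields
\[
n\!\int\! f_j(x)\,\rd(\mu^n_t(x)-\mu_t(x))=\mp\frac{1}{2\pi\ri}\oint_{\gamma}f_j(z_t(z))\,g^n_t(z_t(z))\,\del_z z_t(z)\,\rd z.
\]
I then approximate $\oint_\gamma$ by Riemann sums over a fine partition $z_1,\dots,z_N$ of $\gamma$. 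Each Riemann sum is a fixed finite linear combination of the processes $\{g^n_t(z_t(z_p))\}_{0\le t\le T}$, which by Theorem~\ref{t:CLT} converge jointly in $D([0,T],\bC^N)$ to the corresponding Gaussian processes, so the continuous mapping theorem gives weak convergence of the Riemann sums in $D([0,T],\bR^m)$. The error between Riemann sum and integral is bounded, uniformly in $n$ and $t\in[0,T]$, by an a priori estimate of the form $\sup_{t\le T}\sup_{z\in\gamma}\bE|g^n_t(z_t(z))|^{2}\le C$ plus a matching bound on $\del_z g^n_t(z_t(z))$ yielding equicontinuity in $z$ --- bounds that are available from the machinery behind Theorem~\ref{t:CLT}. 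A standard approximation lemma for weak convergence (sending $n\to\infty$ then $N\to\infty$) then shows that $\{(n\int f_j\,\rd(\mu^n_t-\mu_t))_j\}_{0\le t\le T}$ converges weakly in $D([0,T],\bR^m)$ to $\cF_j(t)=\mp\frac{1}{2\pi\ri}\oint_{\gamma}f_j(z_t(z))\,g_t(z_t(z))\,\del_z z_t(z)\,\rd z$, which is Gaussian as a continuous linear functional of the Gaussian limit field.

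Finally, since the limit is Gaussian and the contour integral is linear, the mean and covariance of $\cF_j$ follow by inserting the formulas \eqref{e:mean0}--\eqref{e:variance0} for $\bE[g_t(z_t(z))]$ and $\cov[g_s(z_s(z)),g_t(z_t(z'))]$ into the integrals and changing variables back via $z=z_t^{-1}(w)$, which reproduces the expressions in the statement. The main obstacle is the pair of uniform-in-$(n,t)$ inputs: the confinement of the extreme particles over all of $[0,T]$, which forces the use of Assumption~\ref{a:ibound} and the coupling technique, and the a priori $L^2$ and derivative bounds on $g^n_t$ along the characteristics, which legitimize interchanging the $n\to\infty$ limit with the contour integral; granted these, the rest is routine complex analysis and weak-convergence bookkeeping.
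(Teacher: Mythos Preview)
Your proposal is correct and follows essentially the same route as the paper: establish confinement of the particles via the coupling with the packed initial data (this is Proposition~\ref{p:extremePbound}), represent the linear statistics as contour integrals of $g^n_t$ by Cauchy's formula, change variables along the characteristic flow $w=z_t(z)$, and then pass to the limit using Theorem~\ref{t:CLT}. The only difference is in that last step: where you approximate the contour integral by Riemann sums and invoke a priori $L^2$ bounds on $g^n_t(z_t(z))$ to reduce to the finite-dimensional convergence supplied by Theorem~\ref{t:CLT}, the paper simply appeals to the continuous mapping theorem on the integral over $z_t^{-1}(\cC)$; your version is the more explicit justification of the same passage, but the two arguments are otherwise identical.
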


\begin{remark}
We prove in Proposition \ref{p:extremePbound} that with exponentially high probability all particles $x_i(t)$ are inside an interval $[0, \fc n]$. The contours in Theorem \ref{t:CLT2} encloses a neighborhood of $[0,\fc/\theta]$. Further, it is enough to assume in Theorem \ref{t:CLT2} that $f_j$ are analytic only in a neighborhood of $[0,\fc/\theta]$.
\end{remark}

As a corollary of Theorem \ref{t:CLT2}, we can identify the fluctuation of the rescaled empirical measure process $\{n(\mu_t^n-\mu_t)\}_{0\leq t\leq T}$ with the Gaussian Free Field. Let $\bH=\{z\in \bC: \Im[z]>0\}$ be the upper half plane. The Gaussian Free Field $\fG$ on $\bH$ with zero boundary conditions, see e.g. \cite{MR2322706}, is a probability measure on a suitable class of generalized functions on $\bH$, and  can be characterized as follows. We take any sequence $\{\phi_k\}_{k\geq 1}$ of compactly supported test functions, the pairings
\begin{align*}
\int_\bH \phi_k(z)\fG(z)|\rd z|^2\deq \fG(\phi_k),\quad k\geq 1,
\end{align*}
form a sequence of mean $0$ Gaussian random variables with covariance matrix
\begin{align*}
\bE\left[\fG(\phi_k), \fG(\phi_l)\right]
=\int_{\bH}|\rd z_1|^2 |\rd z_2|^2 \phi_k(z_1)\phi_l(z_2) G(z_1,z_2),
\end{align*}
where 
\begin{align*}
G(z,w)=-\frac{1}{2\pi}\ln \left|\frac{z-w}{z-\bar w}\right|.
\end{align*}
is the Green function of the Laplacian on $\bH$ with Dirichlet boundary conditions.  One can make sense of the integrals
$\int
f(z)\fG(z)\rd z$ over finite contours in $\bH$ with continuous functions $f(z)$.

Given a $\beta$-nonintersecting Poisson random walk $\bmx(t)=(x_1(t),x_2(t),\cdots, x_n(t))$, with initial data $\bmx(0)=(x_1(0), x_2(0),\cdots, x_n(0))$, we introduce the height function $H_n:  \bR\times \bR_{\geq 0}\mapsto \bZ_{\geq 0}$:
\begin{align*}
H_n(x,t)=|\{1\leq i\leq n: x_i(t)\geq \theta n x\}|.
\end{align*}
We define the map $z\mapsto x(z)+\ri t(z)$ from $\bH$ to $\bH$,
\begin{align*}
 (x(z),t(z))=\left(\frac{ze^{-m_0(\bar z)}-\bar z e^{-m_0(z)}}{e^{-m_0(\bar z)}-e^{-m_0(z)}}, \frac{z-\bar z}{e^{-m_0(\bar z)}-e^{-m_0(z)}}\right).
\end{align*}
We note that the expressions for $x(z)$ and $t(z)$ are invariant with respect to complex conjugate, so $x(z)$ and $t(z)$ are indeed real for any $z\in \bH$. Since $x(z)=z+t(z)e^{-m_0(z)}\in \bR$, the map $z\mapsto (x(z),t(z))$ is in fact a diffeomorphism from $\bH$ to its image. We define the pull back height function on $\bH$,
\begin{align}\label{e:defHn}
H_n(z)\deq H_n(x(z), t(z)),\quad z\in \bH.
\end{align}
One might worry that some information is lost in this transformation, as the image of
the map $z\mapsto (x(z),t(z))$ is smaller than $\bH$, yet the height function $H_n(x,t)$ is actually
frozen outside this image and there are no fluctuations to study. Next corollary states that the height function $H_n(z)$ converges to the Gaussian Free Field on $\bH$ with zero boundary conditions.

\begin{corollary}\label{c:GFF}
Fix $\theta>0$. We assume Assumptions \ref{a:initiallaw} and \ref{a:ibound}. Let $H_n(z)$ be the random height function on $\bH$ as defined in \eqref{e:defHn}, then as $n$ goes to infinity,
\begin{align*}
\sqrt{\pi\theta}\left(H_n(z)-\bE[H_n(z)]\right)\rightarrow \fG(z).
\end{align*}
In more details, for any fixed time $T>0$, and real analytic functions $f_1,f_2,\cdots, f_m$ on $\bR$, the random process,
\begin{align*}
\left\{\left(\sqrt{\pi\theta}\int_\bR f_j(x) (H_n(x, t)-\bE[H_n(x,t)])\rd x\right)_{1\leq j\leq m} \right\}_{0\leq t\leq T},
\end{align*}
converges as $n$ goes to infinity in $D([0,T], \bR^m)$ weakly towards a Gaussian process 
\begin{align*}
\left\{
\left(\int_{z\in \bH, t(z)=t} f_j(x(z))\fG(z) \rd x(z)\right)_{1\leq j\leq m}
\right\}_{0\leq t\leq T}.
\end{align*}
\end{corollary}

\subsection{Related results}

For the $\beta$-Dyson Brownian motion \eqref{e:DBM1}, the asymptotic behavior of the empirical measure process was studied in 
\cite{MR1217451,MR1176727}. They found that the empirical measure process 
\begin{align}\label{e:empirical}
\tilde \mu_t^n=\frac{1}{n}\sum_{i=1}^n \delta_{y_i(t)},\quad 1\leq i\leq n,
\end{align}
converges weakly in the space of continuous measure-valued processes to a deterministic process $\tilde \mu_t$, characterized by the free convolution with semi-circle distributions. It was proven in \cite{MR1819483}, that the rescaled empirical measure process $n(\tilde \mu_t^n-\tilde \mu_t)$ converges weakly in the space of distributions acting on a class of $C^6$ test functions to a Gaussian process, provided that the initial distributions $n(\tilde\mu_0^n-\tilde\mu_0)$ converge. The explicit formulas of the means and the covariances of the limit Gaussian process was derived in \cite{MR2418256}.

%We refer to  \cite{MR2760897} for a nice presentation on the dynamical approach to Wigner's semi-circle law.

More generally, the $\beta$-Dyson Brownian motion $\bmy(t)=(y_1(t), y_2(t),\cdots, y_n(t))$ with potential $V$ is given by
\begin{align*}
\rd y_i(t)=\sqrt{\frac{2}{\beta n}}\rd \cB_i(t)+\frac{1}{n}\sum_{j\neq i}\frac{1}{y_i(t)-y_j(t)}\rd t -\frac{1}{2}V'(y_i(t))\rd t,\quad i=1,2,\cdots, n,
\end{align*}
where $\{(\cB_1(t), \cB_2(t), \cdots, \cB_n(t))\}_{t\geq 0}$ are independent standard Brownian motions. It was proven in \cite{GDBM1, GDBM2}, that under mild conditions on $V$, the empirical measure process converges to a $V$-dependent measure-valued process,  which can be realized as the gradient flow of the Voiculescu free entropy on the Wasserstein space over $\bR$. The central limit theorem of the rescaled empirical measure process was proven in \cite{Un} for $\beta>1$ and sufficiently regular convex potential $V$.

The Wigner-Dyson-Mehta conjecture stated that the eigenvalue correlation functions of a general
class of random matrices converge to the corresponding
ones of Gaussian matrices. The Dyson Brownian motion  \eqref{e:DBM1} plays a central role in the three-step approach to the universality conjecture in a series of works \cite{Landon2016,fix,MR2919197,MR3429490,MR3372074,MR2810797,kevin3}, developed by Erd{\H o}s, Yau and their collaborators. Parallel results were established in certain cases
in \cite{MR2669449,MR2784665}, with a four moment comparison theorem.

The transition probability of the $\beta$-nonintersecting Poisson random walks with the fully-packed initial data $x_i(0)=(n-i)\theta$, $1\leq i\leq n$, is a discrete $\beta$ ensemble with Charlier weight. The discrete $\beta$ ensembles with general weights were introduced in \cite{MR3668648}, which is a probability distribution
\begin{align}\label{e:disc}
\bP_n(\ell_1,\ell_2,\cdots, \ell_n)=\frac{1}{Z_n}\prod_{1\leq i<j\leq n}\frac{\Gamma(\ell_i-\ell_j+1)\Gamma(\ell_i-\ell_j+\theta)}{\Gamma(\ell_i-\ell_j)\Gamma(\ell_i-\ell_j+1-\theta)}\prod_{i=1}^nw(\ell_i; N),
\end{align}
on ordered $n$-tuples $\ell_1>\ell_2>\cdots \ell_n$ such that $\ell_i=\la_i+(n-i)\theta$ and $\la_1\geq \la_2\geq\cdots\geq \la_N$ are integers. The discrete $\beta$ ensembles are discretizations for the $\beta$ ensembles of random matrix theory, which are probability distributions on $n$ tuples of reals $y_1>y_2>\cdots>y_n$,
\begin{align}\label{e:betaensemble}
\bP_n(y_1,y_2,\cdots, y_n)=\frac{1}{Z_n}\prod_{1\leq i<j\leq n}|y_i-y_j|^\beta\prod_{i=1}^ne^{-nV(y_i)},
\end{align}
where the potential $V$ is a continuous function. Under mild assumptions on the potential $V$, the $\beta$ ensembles \eqref{e:betaensemble} exhibit a law of large number, i.e., the empirical measure 
\begin{align*}
\mu^n=\frac{1}{n}\sum_{i=1}^n \delta_{y_i},
\end{align*}
converges to a non-random equilibrium measure $\mu$. For $\beta=1,2,4$ and $V(y)=y^2$, this statement dates back to the original work of Wigner \cite{MR0077805,MR0083848}. We refer to \cite[Chapter 2.6]{MR2760897} for the study of the $\beta$ ensembles with general $V$. In the breakthrough paper \cite{MR1487983}, Johansson introduced  the loop (or Dyson-Schwinger) equations to the mathematical community, and proved that the rescaled empirical measure satisfies a central limit theorem, i.e., for sufficiently smooth functions $f(y)$ the random variable
\begin{align*}
n\int f(x) (\rd \mu^n(x)-\rd\mu(x)).
\end{align*}
converges to a Gaussian random variable. We refer to \cite{MR3010191, borot-guionnet2, KrSh} for further development. The law of large numbers and the central limit theorems of the discrete $\beta$ ensemble \eqref{e:disc} were proven in \cite{MR3668648}, using a discrete version of the loop equations \cite{Nekrasov}.

In the special case when $\beta=2$, the central limit theorem for the global fluctuations of the nonintersecting Poisson random walk were obtained by various methods. For the fully-packed initial data, the central limit theorem was established in \cite{MR3148098} by the technique of determinantal point processes, in \cite{MR3552537,MR3263029} by computations in the universal enveloping algebra of
$U(N)$ and in \cite{Dui, MR3556288} by employing finite term recurrence relations of orthogonal polynomials. For general initial data, the law of large numbers and the central limit theorems were proven in \cite{MR3361772,BuGo}, where the Schur
generating functions were introduced to study random discrete models. Our results give a new proof of these results based on the dynamical approach. 

\subsection{Organization of the paper}

In Section \ref{s:qfc}, we recall the quantized free convolution as introduced in \cite{MR3361772}. We show that the limit measure-valued process $\mu_t$ is characterized by the quantized free convolution. 
In Section \ref{s:compareDBM}, we compare the central limit theorems of the $\beta$-nonintersecting Poisson random walks with those of the $\beta$-Dyson Brownian motions. It turns out that the means and covariances of the limit fluctuation process coincide under Markov-Krein correspondence. 
In Section \ref{s:sde}, we collect some properties of the generator $\cL_\theta^n$ of the $\beta$-nonintersecting Poisson random walks, and derive a stochastic differential equation of the Stieltjes transform of the empirical measure process, which relies on the integrable features of the generator. The stochastic differential equation can be viewed as a dynamical version of the Nekrasov's equation in \cite[Section 4]{MR3668648}, which is crucial for the proof of central limit theorems of the discrete $\beta$ ensembles.
 In Section \ref{s:LLN} and \ref{s:CLT}, we prove the law of large numbers and central limit theorem of the $\beta$-nonintersecting Poisson random walks. We directly analyze the stochastic differential equation satisfied by the Stieltjes transform using the method of characteristics as in \cite{HL}, where the method of characteristics was used to derive the rigidity of the Dyson Brownian motion. Since the $\beta$-nonintersecting Poisson random walks are jump processes, the analysis is more sophisticated than that of the Dyson Brownian motion. In Section \ref{s:extremep} we derive an estimate of the locations of extreme particles, by a coupling technique, and prove the central limit theorem with analytic test functions. 

Finally we remark that by analyzing the stochastic differential equation of the Stieltjes transform of the empirical measure process as in \cite{HL}, one
can prove the optimal rigidity estimates and a mesoscopic central limit theorem for the $\beta$-nonintersecting Poisson random walks.

\noindent\textbf{Acknowledgement.} The author heartily thanks Vadim Gorin for constructive comments on the draft of this
paper.

\section{Law of large numbers for the empirical measure process}

\subsection{Quantized free convolution}\label{s:qfc}
In this section we study the limit measure-valued process $ \mu_t$. To describe it, we need the concept of quantized free convolution as introduced in \cite{MR3361772}. The quantized free convolution is a quantized version of the free convolution originally defined by Voiculescu \cite{MR799593,MR839105} in the setting of operator algebras. Given a probability measure $\mu$, we denote its Stieltjes transform by $m_\mu(z)=\int \rd \mu(x)/(x-z)$, for any $z\in \bC\setminus\bR$. The $R$-transform is defined as
\begin{align}\label{e:defRf}
R_\mu(z)\deq m_\mu^{-1}(-z)-\frac{1}{z},
\end{align}
where $m_\mu^{-1}(z)$ is the functional inverse of $m_\mu(z)$, i.e. $m_\mu(m_{\mu}^{-1}(z))=m_{\mu}^{-1}(m_\mu(z))=z$. 
The free convolution is a unique operation on probability measures $(\mu, \nu)\mapsto \mu\boxplus\nu$, which agrees with the addition of the $R$-transforms:
\begin{align*}
R_\mu(z)+R_{\nu}(z)=R_{\mu\boxplus\nu}(z)
\end{align*}
It was proven in \cite{MR1094052} that the asymptotic distribution of eigenvalues of sums of independent random matrices is given by the free convolution.

The quantized free convolution is an operation on probability measures which have bounded by $1$ density with respect to the Lebesgue measure. One gets the quantized
free convolution by replacing the $R$-transform in \eqref{e:defRf} with the quantized $R$-transform
\begin{align}\label{e:defR}
R_\mu^{\text{quant}}(z)\deq m_\mu^{-1}(-z)-\frac{1}{1-e^{-z}}.
\end{align}
The quantized free convolution is a unique operation on probability measures $(\mu, \nu)\mapsto \mu\otimes\nu$, which agrees with the addition of the quantized $R$-transforms:
\begin{align*}
R_\mu^{\text{quant}}(z)+R_{\nu}^{\text{quant}}(z)=R_{\mu\otimes\nu}^{\text{quant}}(z)
\end{align*}
It was proven in \cite[Theorem 1.1]{MR3361772} that the quantized free convolution characterizes the tensor product of two irreducible representation of unitary group.
 
The Markov-Krein correspondence \cite{MR1618739, MR0167806} gives an exact relationship
between the free convolution and the quantized free convolution. 
\begin{theorem}\label{t:MKcor}
For every probability measure $\mu$ on $\bR$ which has bounded by $1$ density with respect to the Lebesgue measure,  there exists a probability measure $Q(\mu)$ such that 
\begin{align}\label{e:defQ}
m_{Q(\mu)}(z)=1-e^{-m_{\mu}(z)},
\end{align}
where $m_\mu(z)$ and $m_{Q(\mu)}(z)$ are Stieltjes transforms of $\mu$ and $Q(\mu)$ respectively. We denote the operator 
\begin{align*}
\tilde Q(\mu)=r\circ Q \circ r(\mu),
\end{align*}
where $r$ is the reflection of a measure with respect to the origin. The operator $\tilde Q$ intertwines the free convolution and the quantized free convolution, i.e. for any two probability measures $\mu_1, \mu_2$ as above, we have
\begin{align*}
\tilde Q(\mu_1\otimes\mu_2)=\tilde Q(\mu_1)\boxplus \tilde Q(\mu_2).
\end{align*}
\end{theorem}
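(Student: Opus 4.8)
The plan is to prove the theorem in two stages: first establish that the operator $Q$ is well defined (equivalently, that $1-e^{-m_\mu}$ is a genuine Cauchy--Stieltjes transform), and then reduce the intertwining property to a short algebraic identity between $R$-transforms. For the first stage, I would show that when $\rd\mu/\rd x\leq 1$ the function $G(z)\deq 1-e^{-m_\mu(z)}$ is the Cauchy--Stieltjes transform of a probability measure, via the classical Herglotz/Nevanlinna characterization: it is enough to check that $G$ is analytic on $\bC_+$, maps $\bC_+$ into $\bC_+$, and satisfies $\ri y\, G(\ri y)\to-1$ as $y\to\infty$. The normalization is immediate since $m_\mu(\ri y)\to 0$, so $G(\ri y)=m_\mu(\ri y)(1+\oo(1))$ while $\ri y\, m_\mu(\ri y)\to-1$. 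The mapping property is the one place where the density bound is used: writing $z=a+\ri y$,
\[
\Im m_\mu(z)=\int\frac{y}{(x-a)^2+y^2}\,\rd\mu(x)<\int\frac{y}{(x-a)^2+y^2}\,\rd x=\pi,
\]
so $m_\mu$ maps $\bC_+$ into the strip $\{0<\Im w<\pi\}$; hence $-m_\mu(z)$ lies in $\{-\pi<\Im w<0\}$, so $e^{-m_\mu(z)}\in\bC_-$ and therefore $G(z)=1-e^{-m_\mu(z)}\in\bC_+$. This produces $Q(\mu)$; since the reflection $r$ preserves both the class of probability measures and the bound $\rd\mu/\rd x\leq1$, the operator $\tilde Q(\mu)=r\circ Q\circ r(\mu)$ is well defined, and unwinding $m_{r\sigma}(z)=-m_\sigma(-z)$ gives the clean relation $m_{\tilde Q(\mu)}(z)=e^{m_\mu(z)}-1$.

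For the second stage I would derive everything from the single identity $R_\mu^{\text{quant}}(z)=R_{\tilde Q(\mu)}\bigl(1-e^{-z}\bigr)$ for $z$ near $0$. To see it, substitute $z=m_\mu^{-1}(u)$ into $m_{\tilde Q(\mu)}(z)=e^{m_\mu(z)}-1$ to obtain $m_{\tilde Q(\mu)}^{-1}(e^{u}-1)=m_\mu^{-1}(u)$, i.e.\ $m_\mu^{-1}(-z)=m_{\tilde Q(\mu)}^{-1}(e^{-z}-1)$; insert this into $R_\mu^{\text{quant}}(z)=m_\mu^{-1}(-z)-\tfrac{1}{1-e^{-z}}$ and use $m_{\tilde Q(\mu)}^{-1}(-w)=R_{\tilde Q(\mu)}(w)+\tfrac1w$ with $w=1-e^{-z}$; the two polar terms $\tfrac{1}{1-e^{-z}}$ cancel and the identity falls out. (All functional inverses are taken in neighborhoods of $0$, which is legitimate since $m_\mu(z)\sim-1/z$ at $\infty$ and $z\mapsto e^{\pm z}-1$, $z\mapsto 1-e^{-z}$ are local biholomorphisms fixing $0$.) Then, for probability measures $\mu_1,\mu_2$ with density $\leq1$, the defining additivity of the two convolutions yields
\begin{align*}
R_{\tilde Q(\mu_1\otimes\mu_2)}\bigl(1-e^{-z}\bigr)&=R_{\mu_1\otimes\mu_2}^{\text{quant}}(z)=R_{\mu_1}^{\text{quant}}(z)+R_{\mu_2}^{\text{quant}}(z)\\
&=R_{\tilde Q(\mu_1)}\bigl(1-e^{-z}\bigr)+R_{\tilde Q(\mu_2)}\bigl(1-e^{-z}\bigr),
\end{align*}
and, $z\mapsto1-e^{-z}$ being a local biholomorphism at $0$, this forces $R_{\tilde Q(\mu_1\otimes\mu_2)}=R_{\tilde Q(\mu_1)}+R_{\tilde Q(\mu_2)}$ near $0$, hence $\tilde Q(\mu_1\otimes\mu_2)=\tilde Q(\mu_1)\boxplus\tilde Q(\mu_2)$ by the characterization of $\boxplus$.

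The main obstacle is conceptual rather than computational, and sits in the first stage: one must recognize that the hypothesis $\rd\mu/\rd x\leq1$ is precisely what confines the range of $m_\mu$ to the horizontal strip of height $\pi$, which is the mechanism that makes $1-e^{-m_\mu}$ (and dually $e^{m_\mu}-1$) a legitimate Stieltjes transform --- without it $Q$ and $\tilde Q$ are not even defined. The rest is routine bookkeeping; the only technical point requiring care is the domain on which the $R$- and quantized $R$-transforms, and $\mu_1\otimes\mu_2$, are defined. For the cleanest argument one restricts to compactly supported measures, so that the germ of an $R$-transform at $0$ determines the measure outright; this already covers the measures appearing in this paper. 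For general probability measures with density $\leq1$ one runs the same computation at the level of Stieltjes transforms, where $m_{\tilde Q(\mu)}(z)=e^{m_\mu(z)}-1$ holds on all of $\bC_+$ and the subordination description of $\boxplus$ is available.
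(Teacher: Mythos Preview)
Your argument is correct, and it takes a genuinely different route from the paper's for the construction of $Q(\mu)$. The paper (see Remark~\ref{r:proofMKcor}) builds $Q(\mu)$ constructively: it discretizes $\mu$ on a mesh of size $1/m$ to obtain $\mu^m$, forms the associated Perelomov--Popov measure $Q^m(\mu^m)$, uses the density bound $\rd\mu/\rd x\leq 1$ to guarantee $y_i^m-y_{i+1}^m\geq 1/m$ so that the weights are nonnegative, invokes Lemma~\ref{l:average} to see that the weights sum to $1$, and then applies Corollary~\ref{c:average3} to compute that $m_{Q^m(\mu^m)}(z)\to 1-e^{-m_\mu(z)}$; the existence of $Q(\mu)$ then follows from the limiting behaviour $\ri y(1-e^{-m_\mu(\ri y)})\to -1$ together with a standard criterion for Stieltjes transforms. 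The intertwining statement is not argued out in the paper; it is taken from the references on the Markov--Krein correspondence.

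Your approach trades this discrete approximation for a single complex-analytic observation: the density bound $\rd\mu/\rd x\leq 1$ forces $\Im m_\mu(z)\in(0,\pi)$ on $\bC_+$, which is exactly what is needed for $1-e^{-m_\mu}$ to land in $\bC_+$ and hence to be a Pick function with the correct normalization. This is shorter and entirely self-contained; it also makes transparent \emph{why} the hypothesis is needed. What the paper's construction buys instead is an explicit sequence of finitely supported probability measures converging to $Q(\mu)$, and a direct link to the algebraic identities (Lemma~\ref{l:average}, Corollary~\ref{c:average3}) that drive the rest of the analysis of the generator $\cL_\theta^n$. Your second stage, reducing the intertwining to the identity $R_\mu^{\mathrm{quant}}(z)=R_{\tilde Q(\mu)}(1-e^{-z})$ via $m_{\tilde Q(\mu)}=e^{m_\mu}-1$, is a clean addition that the paper does not spell out; the caveat you flag about domains of the $R$-transforms is the right one, and your restriction to compactly supported measures (which covers all measures arising here) is an honest way to handle it.
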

Theorem \ref{t:MKcor} essentially reduces the quantized free convolution to the free convolution. Properties of the quantized free convolution, e.g., existence and uniqueness, follow from their counterparts of the free convolution. We sketch the construction of the operator $Q$ in Remark \ref{r:proofMKcor} in Section \ref{s:sde}.

%
%The Stieltjes transform of  $ \mu_t$ is characterized by \eqref{e:limitMST},
%\begin{align}\label{e:dif1}
%\del_t m_t(z)=-e^{-m_t(z)}\del_z  m_t(z)=\del_z e^{- m_t(z)}.
%\end{align} 
%This is a complex Burgers type equation, and can be solved by the method of characteristics. We define the characteristic line,
%\begin{align}\label{e:dif2}
%\del_t  z_t=e^{- m_t( z_t)}, \quad z_0=z.
%\end{align}
%If the context is clear, we omit the parameter $z$, i.e. we simply write $z_t$ instead of $z_t(z)$. Plugging \eqref{e:dif2} into \eqref{e:dif1}, and applying the chain rule we obtain 
%$
%\del_t  m_t(z_t)=0.
%$
%It implies that $ m_t(z)$ is a constant along the characteristic line, i.e. $m_t(z_t)=m_0(z_0(z))=m_0(z)$. And the solution of the differential equation \eqref{e:dif2} is given by
%\begin{align} 
%z_t(z)=z+te^{-m_0(z)}, \quad 0\leq t< -\frac{\Im[z]}{\Im[e^{-m_0(z)}]}\eqd t(z).
%\end{align}
%{\cor \begin{remark}
%
%
%Therefore if we define the open set $\Omega_t\subset\bC_+$ 
%\begin{align}\label{e:defOmega}
%\Omega_t
%\end{align}
%Then $z_t(u)=u+te^{-m_0(u)}$ is a homeomorphism from the closure of $\Omega_t$ to $\bC_+\cup \bR$, which is conformal from $\Omega_t$ to $\bC_+$.
%\end{remark}
%}

The limit measure-valued process $ \mu_t$ can be described by the  quantized free convolution. We denote $ R_t^{\text{quant}}(z)$ the quantized $R$-transform of the measure $ \mu_t$. From \eqref{e:tmtrelation}, we have
\begin{align*}
\left(m_t\right)^{-1}(z)=\left(m_0\right)^{-1}(z)+te^{-z},
\end{align*}
and 
\begin{align*}
 R_t^{\text{quant}}(z)= R_0^{\text{quant}}(z)+te^{z}.
\end{align*}
There exists a family of measures $ \nu_t$ such that the quantized $R$-transform of $ \nu_t$ is given by $te^{z}$. The Stieltjes transform $m_{\nu_t}(z)$ of $\nu_t$ is given by
\begin{align*}
z e^{2m_{\nu_t}(z)}+(1-t-z)e^{m_{\nu_t}(z)}+t=0.
\end{align*}
We can solve for $m_{\nu_t}(z)$, and the density of $ \nu_t$ is given by for $t\leq 1$,
\begin{align}\label{e:defnu1}
\rd  \nu_t(x)/\rd x=
\left\{
\begin{array}{cc}
\frac{1}{\pi} \text{arccot} \left(\frac{x+t-1}{\sqrt{4xt-(x+t-1)^2}}\right), &(1-\sqrt{t})^2\leq x\leq (\sqrt{t}+1)^2, \\
1,& x< (1-\sqrt{t})^2,\\
0, & x> (\sqrt{t}+1)^2,
\end{array}
\right.
\end{align}
for $t>1$,
\begin{align}\label{e:defnu2}
\rd  \nu_t(x)/\rd x=
\left\{
\begin{array}{cc}
\frac{1}{\pi} \text{arccot} \left(\frac{x+t-1}{\sqrt{4xt-(x+t-1)^2}}\right), &(1-\sqrt{t})^2\leq x\leq (\sqrt{t}+1)^2, \\
0, & x< (\sqrt{t}-1)^2 \text{ or } x> (\sqrt{t}+1)^2.
\end{array}
\right.
\end{align}
We can conclude from the discussion above, 
\begin{proposition}
The limit measure $ \mu_t$ is the quantized free convolution of the initial measure $ \mu_0$ with the measure $ \nu_t$ as defined in \eqref{e:defnu1} and \eqref{e:defnu2}:
\begin{align*}
 \mu_t= \mu_0\otimes \nu_t.
\end{align*}
\end{proposition}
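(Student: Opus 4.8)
The plan is to verify the identity $\mu_t = \mu_0 \otimes \nu_t$ by checking the defining property of the quantized free convolution, namely additivity of the quantized $R$-transforms. Recall from \eqref{e:defR} that for a measure $\mu$ with density bounded by $1$, the quantized $R$-transform is $R_\mu^{\text{quant}}(z) = m_\mu^{-1}(-z) - \frac{1}{1-e^{-z}}$, and that $\mu_t \otimes \nu_t$ is the unique such measure whose quantized $R$-transform equals $R_{\mu_0}^{\text{quant}}(z) + R_{\nu_t}^{\text{quant}}(z)$ (existence and uniqueness being guaranteed by Theorem \ref{t:MKcor}). So it suffices to establish three things: (i) $\mu_t$ has density bounded by $1$, so that $R_{\mu_t}^{\text{quant}}$ is well defined; (ii) the functional inverse identity $(m_t)^{-1}(z) = (m_0)^{-1}(z) + t e^{-z}$; and (iii) that $\nu_t$ as defined in \eqref{e:defnu1}--\eqref{e:defnu2} indeed has quantized $R$-transform $t e^{z}$, i.e. $R_{\nu_t}^{\text{quant}}(z) = t e^z$.

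For (i), the bound $0 \le \rd\mu_t(x)/\rd x \le 1$ is already part of the conclusion of Theorem \ref{t:LLN}, which we are entitled to assume. For (ii), I would start from the relation \eqref{e:tmtrelation}, $m_t(z + t e^{-m_0(z)}) = m_0(z)$. Setting $w = m_0(z)$, so that $z = (m_0)^{-1}(w)$, this reads $m_t\big((m_0)^{-1}(w) + t e^{-w}\big) = w$, which says precisely that $(m_0)^{-1}(w) + t e^{-w}$ is the functional inverse of $m_t$ evaluated at $w$; that is, $(m_t)^{-1}(w) = (m_0)^{-1}(w) + t e^{-w}$. (One should note the conformality statement following \eqref{e:deft} to make sense of these inverses on the appropriate domains $\Omega_t$, but this is exactly the content that "will be proven later" and may be invoked.) Substituting $-z$ for the argument and subtracting $\frac{1}{1-e^{-z}}$ from both sides gives
\begin{align*}
R_{\mu_t}^{\text{quant}}(z) = (m_t)^{-1}(-z) - \frac{1}{1-e^{-z}} = (m_0)^{-1}(-z) + t e^{z} - \frac{1}{1-e^{-z}} = R_{\mu_0}^{\text{quant}}(z) + t e^{z}.
\end{align*}

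For (iii), the quadratic relation $z e^{2m_{\nu_t}(z)} + (1-t-z) e^{m_{\nu_t}(z)} + t = 0$ is the key input; I would solve it for $z$ in terms of $u := m_{\nu_t}(z)$ to obtain $z = \frac{(t-1)e^u - t}{e^{2u} - e^u} = \frac{(t-1)e^u - t}{e^u(e^u-1)}$, which after setting $z = (m_{\nu_t})^{-1}(u)$ and a short manipulation should simplify to $(m_{\nu_t})^{-1}(-z) = \frac{1}{1-e^{-z}} + t e^{z}$, i.e. $R_{\nu_t}^{\text{quant}}(z) = t e^z$ as claimed. One then separately checks that the densities in \eqref{e:defnu1}--\eqref{e:defnu2} are the genuine probability densities attached to the branch of the solution with the correct Stieltjes-transform asymptotics ($m_{\nu_t}(z) \sim -1/z$ as $z \to \infty$) and lie in $[0,1]$; this is a routine but slightly delicate analysis of the algebraic curve, tracking which root to take on $\bC_+$ versus $\bC_-$ and identifying the branch points $(1\pm\sqrt t)^2$. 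Combining (i)--(iii), $R_{\mu_t}^{\text{quant}} = R_{\mu_0}^{\text{quant}} + R_{\nu_t}^{\text{quant}}$, and by uniqueness of the quantized free convolution $\mu_t = \mu_0 \otimes \nu_t$.

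The main obstacle is step (iii), specifically the careful identification of $\nu_t$ from its cubic-type algebraic equation: one must pin down the correct branch of the multivalued function $m_{\nu_t}(z)$, verify it is a genuine Stieltjes transform of a probability measure, extract the density via the Stieltjes inversion formula (which produces the $\frac{1}{\pi}\operatorname{arccot}$ expression on the band and the flat pieces $0$ and $1$ off the band), and confirm the density never exceeds $1$ so that $\nu_t$ is in the domain of the quantized free convolution. The algebra relating the inverse function to $t e^z$ is short, but getting the measure-theoretic bookkeeping right — including the qualitative change at $t = 1$ where the left edge detaches from a flat piece of height $1$ — is where the real work lies. Everything else is either a direct consequence of \eqref{e:tmtrelation} or already furnished by Theorems \ref{t:LLN} and \ref{t:MKcor}.
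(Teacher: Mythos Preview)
Your proposal is correct and follows essentially the same approach as the paper. The paper's ``proof'' is simply the phrase ``We can conclude from the discussion above,'' where that discussion consists precisely of your steps (ii) and (iii): deriving $(m_t)^{-1}(z)=(m_0)^{-1}(z)+te^{-z}$ from \eqref{e:tmtrelation}, translating this into $R_{\mu_t}^{\text{quant}}=R_{\mu_0}^{\text{quant}}+te^z$, and asserting that $\nu_t$ with the stated density has quantized $R$-transform $te^z$; you have supplied more detail (and correctly flagged the branch analysis for $\nu_t$ as the nontrivial part), but the logical skeleton is identical.
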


In the rest of this section, we collect some properties of the Stieltjes transform $m_t$, the characteristic lines $z_t$ and the logarithmic potential $h_t$ of the measure $\mu_t$,
\begin{align}\label{e:defht}
h_t(z)=\int\log(x-z)\rd \mu_t(x), \quad z\in \bC\setminus \bR.
\end{align}
We remark that $\del_z h_t(z)=-m_t(z)$.

\begin{proposition}\label{p:ztp}
For any time $t\geq 0$, we define an open set $\Omega_t\subset\bC\setminus \bR$ 
\begin{align} \label{e:defOmega}
\Omega_t\deq\left\{z\in \bC\setminus \bR: \int \frac{\rd Q(\mu_0)(x)}{|x-z|^2}<\frac{1}{t}\right\},
\end{align}
where the operator $Q$ is defined in Theorem \ref{t:MKcor}. Then,  $z_t(z)=z+te^{-m_0(z)}$ is conformal from $\Omega_t$ to $\bC\setminus \bR$, and is a homeomorphism from the closure of $\Omega_t\cap \bC_+$ to $\bC_+\cup \bR$, and from the closure of $\Omega_t\cap \bC_-$ to $\bC_-\cup \bR$. 
Moreover for any $z\in \Omega_t$, $|\Im[z_s]|$ is monotonically decreasing for $0\leq s\leq t$, i.e., $|\Im[z_s]|\geq |\Im[z_t]|$.
\end{proposition}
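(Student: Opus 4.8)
The plan is to analyze the explicit map $z_t(z) = z + t e^{-m_0(z)}$ by relating its derivative to the Stieltjes transform of $Q(\mu_0)$, and then to use a standard argument that a locally injective holomorphic map which extends continuously to the boundary of a Jordan-type domain and wraps the boundary once around $\bR$ is a conformal bijection. First I would compute
\begin{align*}
\del_z z_t(z) = 1 - t\, \del_z m_0(z)\, e^{-m_0(z)} = 1 - t\, \del_z\!\left(e^{-m_0(z)}\right) = 1 - t\, \del_z m_{Q(\mu_0)}(z),
\end{align*}
using \eqref{e:defQ}, since $m_{Q(\mu_0)} = 1 - e^{-m_0}$ gives $\del_z m_{Q(\mu_0)} = \del_z m_0 \, e^{-m_0}$. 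Now $\del_z m_{Q(\mu_0)}(z) = \int \rd Q(\mu_0)(x)/(x-z)^2$, so on $\Omega_t$ we have the bound $|\del_z m_{Q(\mu_0)}(z)| \le \int \rd Q(\mu_0)(x)/|x-z|^2 < 1/t$, whence $|t\,\del_z m_{Q(\mu_0)}(z)| < 1$ and therefore $\del_z z_t(z) \neq 0$ throughout $\Omega_t$. So $z_t$ is locally conformal on $\Omega_t$; the content is global injectivity and surjectivity onto $\bC\setminus\bR$.

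For this I would work on $\Omega_t \cap \bC_+$ (the lower half is symmetric by conjugation) and argue as follows. The function $m_0$, hence $e^{-m_0}$ and hence $z_t$, is holomorphic on $\bC_+$; one checks $\Im[z_t(z)] = \Im[z] - t\,\Im[m_{Q(\mu_0)}(z)]\cdot(\ldots)$ — more precisely $\Im[z_t(z)] = \Im[z]\big(1 - t\int \rd Q(\mu_0)(x)/|x-z|^2\big) + t\,\Re[\text{something}]$; the clean way is to note that along the flow $\del_s z_s = e^{-m_s(z_s)}$ one has $\del_s \Im[z_s] = \Im[e^{-m_s(z_s)}] = -\Im[m_{Q(\mu_s)}(z_s)]$, and since $Q(\mu_s)$ is a probability measure on $\bR$ its Stieltjes transform maps $\bC_+$ to $\bC_+$, so $\Im[m_{Q(\mu_s)}(z_s)] > 0$ when $\Im[z_s] > 0$; this simultaneously gives that $|\Im[z_s]|$ is decreasing in $s$ (the last assertion of the Proposition) and that $\Im[z_s]$ stays positive, so the flow does not exit $\bC_+$. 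To get that $z_t$ maps $\Omega_t\cap\bC_+$ onto all of $\bC_+$ and is injective, I would verify that $\Omega_t$ is precisely the set of starting points for which the characteristic flow $z_s(z)$ is defined and stays in $\bC\setminus\bR$ for all $s\in[0,t]$ (this is where the definition \eqref{e:defOmega} comes from: $\ft(z) = -\Im[z]/\Im[e^{-m_0(z)}]$ from \eqref{e:deft}, and $\Im[e^{-m_0(z)}] = -\Im[m_{Q(\mu_0)}(z)] = \Im[z]\int \rd Q(\mu_0)/|x-z|^2$, so $\ft(z) > t$ iff $z\in\Omega_t$), and then use that the map $z\mapsto z + te^{-m_0(z)}$ is the time-$t$ map of an ODE flow, hence a homeomorphism onto its image; injectivity on $\Omega_t$ follows because two distinct characteristics cannot cross (if $z_t(z) = z_t(z')$ then running the autonomous ODE backward from the common point forces $z = z'$). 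Surjectivity onto $\bC\setminus\bR$: given $w\in\bC_+$, one inverts via \eqref{e:tmtrelation}, i.e. $m_t(w)$ is determined and $z = w - t e^{-m_0(z)}$ can be solved since $m_t$ is itself a Stieltjes transform of a sub-probability density bounded by $1$ (established in Theorem \ref{t:LLN}), making the relevant fixed-point/argument-principle count come out to exactly one preimage.

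The boundary behavior — that $z_t$ extends to a homeomorphism from $\overline{\Omega_t\cap\bC_+}$ to $\bC_+\cup\bR$ — I would obtain by showing $m_0$ (equivalently $m_{Q(\mu_0)}$) extends continuously to $\del(\Omega_t\cap\bC_+)$: on the part of the boundary lying in $\bR$ this is automatic where the density of $Q(\mu_0)$ is nice, and on the curve $\{z : \int \rd Q(\mu_0)/|x-z|^2 = 1/t\}\cap\bC_+$ the extension is holomorphic; one then checks $\del_z z_t$ stays nonzero up to (and including) this curve except possibly at isolated points, and that $z_t$ carries this curve into $\bR$ (precisely because $\Im[z_t] \to 0$ there), so the extended map sends the boundary of $\Omega_t\cap\bC_+$ into $\bR$ and is a proper degree-one map, hence a homeomorphism onto $\bC_+\cup\bR$. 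The main obstacle I expect is this last point: controlling the regularity of $m_0$ and of the free boundary curve $\del\Omega_t$ near points where $Q(\mu_0)$ has atoms of density or where $\del_z z_t$ degenerates, and making the "wraps around once" degree count fully rigorous rather than heuristic — essentially a careful application of the argument principle / Darboux-type boundary correspondence for conformal maps, handled in the same spirit as the characteristics analysis in \cite{HL}, adapted to the fact that the relevant measure here is $Q(\mu_0)$ rather than $\mu_0$ itself.
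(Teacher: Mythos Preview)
Your approach is sound in outline and shares the key first step with the paper: rewriting $z_t$ in terms of the Stieltjes transform of $Q(\mu_0)$. But the paper's proof is two lines: using Theorem~\ref{t:MKcor} one has $e^{-m_0(z)}=1-m_{Q(\mu_0)}(z)$, hence
\[
z_t(z)=z+t-t\,m_{Q(\mu_0)}(z),
\]
which is exactly the subordination map for free additive convolution with a semicircular of variance $t$, and all the claimed conformal and boundary properties are then a direct citation of Biane's result \cite[Lemma~4]{MR1488333}. So where you set out to prove local injectivity, global bijectivity via a flow/degree argument, and boundary extension by hand, the paper simply recognizes the map as a known object and invokes the existing theorem.

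Two further remarks on your writeup. First, the monotonicity of $|\Im[z_s]|$ does not need the time-dependent flow $\partial_s z_s=e^{-m_s(z_s)}$ or any knowledge of $Q(\mu_s)$: since $z_s(z)=z+s\,e^{-m_0(z)}$ is \emph{affine} in $s$, $\Im[z_s]=\Im[z]\bigl(1-s\int \rd Q(\mu_0)(x)/|x-z|^2\bigr)$ is linear and decreasing in $s$ for $z\in\bC_+$, with no extra ``$\Re[\text{something}]$'' term (your hesitation there was unwarranted). Second, the parts you flag as obstacles---boundary regularity of $\partial\Omega_t$ and the degree-one argument---are precisely what Biane's lemma handles; reproving them here is possible but is genuinely the nontrivial content, and your sketch does not yet close those gaps. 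The cleaner route, and the one the paper takes, is to reduce to the semicircular free-convolution picture via Markov--Krein and cite the known result.
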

\begin{proof}
Thanks to Theorem \ref{t:MKcor}, we have
\begin{align*}
z_t(z)=z+te^{-m_0(z)}=z+t-tm_{Q(\mu_0)}(z),
\end{align*}
and the proposition follows from \cite[Lemma 4]{MR1488333}.
\end{proof}

\begin{proposition}\label{p:estimatemt}
Fix $T>0$. For any $0\leq t\leq T$ and $z\in \Omega_T$ as defined in \eqref{e:defOmega}, we have
\begin{align}\begin{split}\label{e:mtestimate}
&(\del_z m_t)(z_t(z))=\frac{\del_zm_0(z)}{1-t\del_z m_0(z)e^{-m_0(z)}},\quad 
(\del^2_z m_t)(z_t(z))=\frac{\del_z^2 m_0(z)-t(\del_zm_0(z))^3e^{-m_0(z)}}{(1-t\del_z m_0(z)e^{-m_0(z)})^3},\\
&(\del_t m_t)(z_t(z))=-\frac{\del_z m_0(z)e^{-m_0(z)}}{1-t\del_z m_0(z)e^{-m_0(z)}},\quad (\del_t h_t)(z_t(z))=-e^{-m_0(z)}.
\end{split}\end{align}
\end{proposition}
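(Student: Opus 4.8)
The plan is to derive all four identities by differentiating the characteristic identity \eqref{e:tmtrelation}, that is $m_t(z_t(z)) = m_0(z)$ with $z_t(z) = z + te^{-m_0(z)}$, together with the potential relation $\del_z h_t = -m_t$. All differentiations are legitimate on $\Omega_T$ because, by Proposition \ref{p:ztp}, $z_t$ is a conformal bijection from $\Omega_T$ onto $\bC\setminus\bR$ there; in particular $m_t$ is analytic in a neighbourhood of $z_t(z)$, and $z_t$ may be inverted. First I would record, using $\del_z(e^{-m_0}) = -(\del_z m_0)e^{-m_0}$,
\begin{align*}
\del_z z_t(z) &= 1 - t(\del_z m_0(z))e^{-m_0(z)},\\
\del_z^2 z_t(z) &= -t\,e^{-m_0(z)}\bigl((\del_z^2 m_0(z)) - (\del_z m_0(z))^2\bigr),\\
\del_t z_t(z) &= e^{-m_0(z)}.
\end{align*}

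Next, differentiating $m_t(z_t(z)) = m_0(z)$ once in $z$ gives $(\del_z m_t)(z_t(z))\,\del_z z_t(z) = \del_z m_0(z)$, which is the first identity. Differentiating the resulting relation once more in $z$ and solving for the second derivative yields
\[
(\del_z^2 m_t)(z_t(z)) = \frac{(\del_z^2 m_0(z))\,\del_z z_t(z) - (\del_z m_0(z))\,\del_z^2 z_t(z)}{(\del_z z_t(z))^3},
\]
and plugging in the formulas for $\del_z z_t$ and $\del_z^2 z_t$ above — the two mixed terms $\mp t(\del_z m_0)(\del_z^2 m_0)e^{-m_0}$ in the numerator cancel — gives the second identity. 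Differentiating $m_t(z_t(z)) = m_0(z)$ in $t$ gives $(\del_t m_t)(z_t(z)) + (\del_z m_t)(z_t(z))\,\del_t z_t(z) = 0$; combining this with the first identity and $\del_t z_t(z) = e^{-m_0(z)}$ produces the third identity. The only care needed in this block is the bookkeeping in the second derivative.

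Finally, for the identity involving $h_t$ I would combine $\del_z h_t = -m_t$ with the complex Burgers equation \eqref{e:dif1}, which gives $\del_t m_t = \del_z(e^{-m_t})$. Interchanging the derivatives (justified by analyticity of $m_t$ on $\bC\setminus\bR$ and the explicit control on $z_t$ from Proposition \ref{p:ztp}) one gets $\del_z(\del_t h_t) = \del_t(\del_z h_t) = -\del_t m_t = -\del_z(e^{-m_t})$, hence $\del_t h_t(w) = -e^{-m_t(w)} + c(t)$ for some function $c(t)$ not depending on $w$. I would pin down $c(t)$ by letting $|w|\to\infty$: since $\mu_t$ is a probability measure (with uniformly compact support for $t\in[0,T]$), $m_t(w)\to 0$, while $h_t(w) - \log(-w) = \int\log(1 - x/w)\,d\mu_t(x)\to 0$ uniformly in $t$, so $\del_t h_t(w)\to 0$; this fixes $c(t)$, and evaluating $\del_t h_t(w) = c(t) - e^{-m_t(w)}$ at $w = z_t(z)$ with $m_t(z_t(z)) = m_0(z)$ gives the last identity. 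I expect this constant-determination step — together with the routine justification of differentiating under the integral sign in the asymptotic expansion of $h_t$ at infinity — to be the only real, if minor, obstacle; the other three identities are pure computation.
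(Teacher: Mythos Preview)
Your approach is essentially identical to the paper's. For the first three identities both you and the paper simply differentiate $m_t(z_t(z))=m_0(z)$; for the fourth, the paper differentiates the composite $(\del_t h_t)(z_t(z))$ in $z$ and invokes the already-proven third identity, while you integrate $\del_z(\del_t h_t)=-\del_z(e^{-m_t})$ in the $w$-variable and then substitute $w=z_t(z)$ --- these are the same computation up to reparametrisation.

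One caveat on the constant-determination step: your own asymptotics say $\del_t h_t(w)\to 0$ and $e^{-m_t(w)}\to 1$ as $|w|\to\infty$, so your relation $\del_t h_t(w)=c(t)-e^{-m_t(w)}$ forces $c(t)=1$, not $0$. The identity your argument actually produces is therefore $(\del_t h_t)(z_t(z))=1-e^{-m_0(z)}$. The paper's own proof contains the same slip (it records $\lim_{z\to\infty}(\del_t h_t)(z_t(z))=0$ but then omits the resulting additive constant). This appears to be a typo in the stated identity rather than a flaw in either argument, and it is harmless for the rest of the paper: the formula is only ever used through the difference $(\del_s h_s)(z_s(z_j))-(\del_s h_s)(z_s(z_k))$, where any additive constant cancels.
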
 
\begin{proof}
The first three relations follow directly by taking derivative of \eqref{e:tmtrelation}. For the last relation, we have
\begin{align*}
\del_z((\del_t h_t)(z_t(z)))
=-(\del_t m_t)(z_t)\del_z z_t(z)=\del_z(-e^{-m_0(z)}).
\end{align*}
The last relation follows by noticing that $\lim_{z\rightarrow\infty}(\del_t h_t)(z_t(z))=0$.
\end{proof}

\subsection{Comparing with $\beta$-Dyson Brownian motion}\label{s:compareDBM}
In this section, we compare the central limit theorems of the $\beta$-nonintersecting random walks with those of the  $\beta$-Dyson Brownian motion. For general $\beta>0$, we recall the $\beta$-Dyson Brownian motion $\bmy(t)=(y_1(t), y_2(t),\cdots, y_n(t))$ is a diffusion process solving 
\begin{align}\label{e:DBM3}
\rd y_i(t)=\sqrt{\frac{2}{\beta n}}\rd \cB_i(t)+\frac{1}{n}\sum_{j\neq i}\frac{1}{y_i(t)-y_j(t)}\rd t +\rd t,\quad i=1,2,\cdots, n,
\end{align}
where $\{(\cB_1(t), \cB_2(t),\cdots, \cB_n(t))\}_{t\geq 0}$ are independent standard Brownian motions, and $\{\bmy(t)\}_{t>0}$ lives on the Weyl chamber $\bW^n=\{(\la_1,\la_2,\cdots,\la_n): \la_1>\la_2>\cdots>\la_n\}$. 
\begin{remark}
The expression \eqref{e:DBM3} is slightly different from \eqref{e:DBM1}. We add a constant drift term in \eqref{e:DBM3}, so that it matches with the dynamics of the $\beta$-nonintersecting Poisson random walks. 
\end{remark}
We denote the empirical measure process of \eqref{e:DBM3},  
\begin{align*}
\tilde \mu_t^n \deq \frac{1}{n}\sum_{i=1}^n \delta_{y_i(t)}.
\end{align*}
It follows from \cite{MR1217451,MR1176727}, if the initial empirical measure $\tilde \mu_0^n$ converges in the L{\'e}vy metric as $n$ goes to infinity towards a probability measure $\tilde \mu_0$ almost surely (in probability),
then, for any fixed time $T>0$, $\{\tilde \mu^n_t\}_{0\leq t\leq T}$ converges as $n$ goes to infinity  in $D([0,T], M_1(\bR))$ almost surely (in probability). The Stieltjes transform of the limit measure-valued process $\{\tilde\mu_t\}_{0\leq t\leq T}$ 
\begin{align*}
 \tilde m_t(z)=\int \frac{\rd \tilde \mu_t(x)}{x-z},\quad z\in \bC\setminus \bR,
\end{align*} 
is characterized by
\begin{align}\begin{split}\label{e:stDBM}
& \tilde m_t(\tilde z_t(z))=\tilde m_0(z),\\
& \tilde z_t(z)=z+t-t\tilde m_0(z),
\end{split}\end{align}
where $\tilde z_t(z)$ is well-defined on the domain
\begin{align*}
\tilde \Omega_t\deq\left\{z\in \bC\setminus \bR: \int \frac{\rd \tilde \mu_0(x)}{|x-z|^2}<\frac{1}{t}\right\}.
\end{align*}
We recall the limit empirical measure process $\mu_t$ of the $\beta$-nonintersecting Poisson random walks from Theorem \ref{t:LLN}, its Stieltjes transform $m_t(z)$ in \eqref{e:defmt}, and the key relations \eqref{e:deft} and \eqref{e:tmtrelation}. We also recall the Markov-Krein correspondence operator $Q$ from Theorem \ref{t:MKcor}. If we take the $\tilde \mu_0=Q(\mu_0)$, by the defining relation \eqref{e:defQ} of $Q$, we have
\begin{align*}
\tilde m_0(z)=1-e^{-m_0(z)}.
\end{align*}
Therefore, the characteristic lines for $m_t(z)$ and $\tilde m_t(z)$ are the same: 
\begin{align}\label{e:cline}
\tilde z_t(z)=z+t-t\tilde m_0(z)=z+te^{-m_0(z)}=z_t(z).
\end{align}
The Stieltjes transforms $m_t(z)$ and $\tilde m_t(z)$ satisfy
\begin{align*}
\tilde m_t(\tilde z_t(z))
=\tilde m_0(z)
=1-e^{-m_0(z)}
=1-e^{-m_t(z_t(z))}
=1-e^{-m_t(\tilde z_t(z))}.
\end{align*}
Since $\tilde z_t(z)$ is a surjection onto $\bC\setminus \bR$, we get $\tilde \mu_t=Q(\mu_t)$.
%\begin{tikzcd}
%\mu_0 \arrow[rrrrrr, "\beta-\text{Nonintersecting Poisson random walk}"] \arrow[d, "Q"]
%& & & & & &\mu_t \arrow[d, "Q"]\\
%\tilde \mu_0 \arrow[rrrrrr, "\beta-\text{Dyson Brownian motion}"]
%& & & & & &\tilde \mu_t
%\end{tikzcd}

We denote the rescaled fluctuation process
\begin{align*}
\tilde g_t^n(z)=n(\tilde m_t^n(z)- \tilde m_t(z)).
\end{align*}
It follows from \cite{MR1819483,MR2418256}, if there exists a constant $\fa$, such that for any $z\in \bC\setminus\bR$, 
\begin{align*}
\bE\left[|\tilde g_0^n(z)|^2\right]\leq \fa(\Im[z])^{-2},
\end{align*}
and the random field $(\tilde g_0^n(z))_{z\in \bC\setminus\bR}$ weakly converges to a deterministic field $(\tilde g_0(z))_{z\in \bC\setminus \bR}$, in the sense of finite dimensional distributions, then, 
for any fixed time $T>0$,  the process $\{(\tilde g^n_t(\tilde z_t(z)))_{z\in\tilde \Omega_T}\}_{0\leq t\leq T}$ converges weakly towards a Gaussian process $\{(\tilde g_t(\tilde z_t(z)))_{z\in \tilde \Omega_T}\}_{0\leq t\leq T}$, in the sense of finite dimensional processes, with initial data $(\tilde g_0(z))_{z\in \bC\setminus\bR}$, means and covariances
\begin{align}\begin{split}\label{e:meanandvar2}
\bE[\tilde g_t(\tilde z_t(z))]&=\tilde \mu(t, z)=\frac{\tilde g_0(z)}{ \del_z\tilde z_t(z)}+\left(\frac{1}{2}-\frac{1}{2\theta}\right)\frac{\del_z^2\tilde z_t(z)}{(\del_z \tilde z_t(z))^2},\\
\cov[\tilde g_s(\tilde z_s(z)), \tilde g_t(\tilde z_t(z'))]&=\tilde \sigma(s, z, t, z')=\frac{1}{\theta}\frac{1}{\del_z\tilde z_s(z)\del_z\tilde z_t(z')}\left(\frac{1}{(z-z')^2}-\frac{\del_z \tilde z_{s\wedge t}(z)\del_z \tilde z_{s\wedge t}(z')}{(\tilde z_{s\wedge t}(z)-\tilde z_{s\wedge t}(z'))^2}\right),\\
\cov[\tilde g_s(\tilde z_s(z)), \overline{\tilde g_t(\tilde z_t(z'))}]&=\tilde \sigma(s, z,t,\bar z').
\end{split}
\end{align}
\begin{remark}
The statements in  \cite{MR1819483,MR2418256} are for $\beta$-Dyson Brownian motions with quadratic potential, which differ from \eqref{e:DBM3} by a rescaling of time and space. \eqref{e:meanandvar2} follows from \cite[Theorem 2.3]{MR2418256} by a change of variable.
\end{remark}

By comparing \eqref{e:meanandvar2} with \eqref{e:meanandvar},  if we replace the characteristic lines $\tilde z_t(z)$ in the expressions of the means and covariances of the random field $\{(\tilde g_t(\tilde z_t(z)))_{z\in\tilde \Omega_T}\}_{0\leq t\leq T}$ by $z_t(z)$, we get the means and variances of the random field  $\{(g_t( z_t(z)))_{z\in \Omega_T}\}_{0\leq t\leq T}$. If we take the $\tilde \mu_0=Q(\mu_0)$ and $\tilde g_0(z)=g_0(z)$, then from the discussion above, we have $\tilde \mu_t=Q(\mu_t)$ and $\tilde z_t(z)=z_t(z)$ from \eqref{e:cline}. Thus, 
\begin{align*}
\{(\tilde g_t(\tilde z_t(z)))_{z\in\tilde \Omega_T}\}_{0\leq t\leq T}\stackrel{d}{=}\{(g_t( z_t(z)))_{z\in \Omega_T}\}_{0\leq t\leq T},
\end{align*}
in the sense of finite dimensional processes. From the discussion above, we have that the following diagram commutes 
\begin{center}
\begin{tikzcd}\label{e:commute}
(\mu_0, g_0(z)) \arrow[rrrrrrr, "\beta-\text{nonintersecting Poisson random walks}"] \arrow[d, "Q\otimes I"]
& & & & & &&(\mu_t, g_t(z_t(z)))\arrow[d, "Q\otimes I"]\\
(\tilde \mu_0, \tilde g_0(z)) \arrow[rrrrrrr, "\beta-\text{Dyson Brownian motions}"]
& & & & & &&(\tilde \mu_t, \tilde g_t(\tilde z_t(z))),
\end{tikzcd}
\end{center}
where $Q$ is the Markov-Krein correspondence from Theorem \ref{t:MKcor}, and $I$ is the identity map.

Similar to the $\beta$-nonintersecting Poisson random walks, the fluctuation of the rescaled empirical measure process $\{n(\tilde \mu_t^n-\tilde \mu_t)\}_{0\leq t\leq T}$ can be identified with the Gaussian Free Field $\fG$ on $\bH$ with zero boundary conditions. We introduce the height function $\tilde H_n:  \bR\times \bR_{\geq 0}\mapsto \bZ_{\geq 0}$:
\begin{align*}
\tilde H_n(y,t)=|\{1\leq i\leq n: y_i(t)\geq n y\}|.
\end{align*}
We define the map $z\mapsto y(z)+\ri t(z)$ from $\bH$ to $\bH$,
\begin{align*}
 (y(z),t(z))=\left(\frac{(\bar z\tilde m_0(z)-z\tilde m_0(\bar z))+(z-\bar z)}{\tilde m_0(z)-\tilde m_0(\bar z)}, \frac{z-\bar z}{\tilde m_0(z)-\tilde m_0(\bar z)}\right).
\end{align*}
We note that the expressions for $y(z)$ and $t(z)$ are invariant with respect to complex conjugate, so $y(z)$ and $t(z)$ are indeed real for any $z\in \bH$. Since $y(z)=z+t(z)(1-\tilde m_0(z))\in \bR$, the map $z\mapsto (y(z),t(z))$ is in fact a diffeomorphism from $\bH$ to its image. We define the pull back height function on $\bH$,
\begin{align}\label{e:defHn}
H_n(z)\deq H_n(y(z), t(z)),\quad z\in \bH.
\end{align}
Then we have $\sqrt{\pi\theta}\left(H_n(z)-\bE[H_n(z)]\right)\rightarrow \fG(z)$, as $n$ goes to infinity, in the sense of Corollary \ref{c:GFF}.

\subsection{Stochastic differential equation for the Stieltjes transform}
\label{s:sde}
In this section we derive a stochastic differential equation for the Stieltjes transform of the empirical measure process $\mu_t^n$.
\begin{proposition}\label{p:msde}
The Stieltjes transform of the empirical measure process satisfies the following stochastic differential equation
\begin{align}\begin{split}\label{e:msde}
m^n_t(z_t)=m^n_0(z)
+\int_0^t \del_z m^n_s(z_s) e^{-m_s(z_s)}\rd s&+\theta n\int_{0}^t\left(\prod_{j=1}^n\left(1+\frac{1}{n}\frac{1}{z_s-x_j(s)/\theta n}\right)\right.\\
&-\left.\prod_{j=1}^n\left(1+\frac{1}{n}\frac{1}{(z_s-1/\theta n)-x_j(s)/\theta n}\right)\right)\rd s+M^n_t(z),
\end{split}\end{align}
where $z_t$ is defined in \eqref{e:dif2} with $z_0=z$ and $M^n_t(z)$ is a Martingale starting at $0$, with quadratic variations,
\begin{align}\begin{split}\label{e:qv}
[ M^n(z), M^n(z)]_t=\sum_{0\leq s\leq t}(m^n_s(z_s)-m^n_{s-}(z_s))^2, \\ [ M^n(z), \overline{M^n(z)}]_t=\sum_{0\leq s\leq t}|m^n_s(z_s)-m^n_{s-}(z_s)|^2.
\end{split}
\end{align}
\end{proposition}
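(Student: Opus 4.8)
The plan is to apply the generator $\cL^n_\theta$ to a well-chosen test function and use Dynkin's formula, then track how the evolution interacts with the (deterministic) characteristic line $z_t$. First I would fix $z\in\bC\setminus\bR$ and apply the generator to $f_z(\bmx) = \frac{1}{n}\sum_{i=1}^n\frac{1}{x_i/\theta n - z}$. Using the explicit form of $\cL^n_\theta$ in \eqref{e:generator}, the action on a single resolvent term $1/(x_i/\theta n - z)$ picks up two contributions: the $i$-th particle moving (which shifts $x_i\mapsto x_i+1$, turning $x_i/\theta n - z$ into $x_i/\theta n - (z - 1/\theta n)$) and the $j$-th particle moving for $j\neq i$ (which changes the weight but not this resolvent term). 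The key integrable identity — this is where the product structure $\prod_{j\neq i}\frac{x_i-x_j+\theta}{x_i-x_j}$ is essential — is that after summing over $i$, the combination collapses into the telescoping product form: $\cL^n_\theta f_z(\bmx) = \theta n\big(\prod_{j=1}^n(1+\tfrac1n\tfrac{1}{z-x_j/\theta n}) - \prod_{j=1}^n(1+\tfrac1n\tfrac{1}{(z-1/\theta n)-x_j/\theta n})\big) \cdot(\text{sign/normalization})$ plus a term involving $\del_z m^n(z)$. I expect this algebraic collapse to rely on a residue/partial-fractions computation analogous to the derivation of the Nekrasov equation in \cite[Section 4]{MR3668648}, and I would verify it by comparing poles and residues of both sides as rational functions of $z$.

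Next I would incorporate the time-dependent substitution $z\mapsto z_t$. Since $z_t$ solves the ODE $\del_t z_t = e^{-m_s(z_s)}$ with $z_0 = z$ (a deterministic curve, by Theorem \ref{t:LLN} and \eqref{e:dif2}), I would consider the process $t\mapsto m^n_t(z_t)$ and apply the chain rule together with Dynkin's formula. The total time derivative has three pieces: the "frozen-$z$" generator part computed in the first step, the extra term $\del_z m^n_t(z_t)\cdot\del_t z_t = \del_z m^n_t(z_t)\,e^{-m_t(z_t)}$ coming from differentiating through the moving argument, and the martingale part $M^n_t(z)$ which collects the compensated jumps. Integrating in time gives \eqref{e:msde}. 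One subtlety: I should check that $z_t$ stays in $\bC\setminus\bR$ for $t\in[0,T]$ so that all resolvents remain bounded — this is guaranteed on $\Omega_T$ by Proposition \ref{p:ztp}, and more generally $\Im[z_t]$ has the same sign as $\Im[z]$ up to the blow-up time $\ft(z)$ from \eqref{e:deft}.

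For the martingale and its bracket, I would use the standard fact that for a pure-jump Markov process with generator $\cL$, the process $f(X_t) - f(X_0) - \int_0^t \cL f(X_s)\rd s$ is a martingale, and its predictable quadratic variation is $\int_0^t (\cL f^2 - 2f\cL f)(X_s)\rd s = \int_0^t \sum_{\text{jumps}}(\text{rate})\cdot(\Delta f)^2\rd s$. Here $M^n_t(z)$ collects exactly the jumps of $m^n_s(z_s)$ (the argument $z_s$ being deterministic and continuous contributes nothing to jumps), so the jump of the process at a jump time $s$ of $\bmx$ is $m^n_s(z_s) - m^n_{s-}(z_s)$, giving \eqref{e:qv} directly; the second identity follows identically since $\overline{M^n}$ jumps by the conjugate. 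The main obstacle I anticipate is the first step: getting the exact telescoping-product identity with the right error term $\del_z m^n_s(z_s)e^{-m_s(z_s)}$ — in particular being careful that the exponent in $e^{-m_s(z_s)}$ is the \emph{deterministic} limiting Stieltjes transform $m_s$ (from Theorem \ref{t:LLN}), not $m^n_s$, which means this term is really introduced by the definition of the characteristic line and only the product terms genuinely come from $\cL^n_\theta$. I would double-check the bookkeeping by expanding $\prod_j(1 + \tfrac1n\tfrac{1}{z-x_j/\theta n})$ to first order in $1/n$ and confirming it reproduces, to leading order, the limiting equation \eqref{e:limitMST}.
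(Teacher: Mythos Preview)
Your proposal is correct and follows the same overall architecture as the paper: apply It\^o/Dynkin to the jump process, compute $\cL^n_\theta m^n_s(z)$ explicitly, and add the chain-rule term $\del_z m^n_s(z_s)\,\del_t z_s = \del_z m^n_s(z_s)\,e^{-m_s(z_s)}$ coming from the deterministic characteristic. You are also right that the $e^{-m_s(z_s)}$ factor enters only through the characteristic and that the generator contributes exactly the two products.

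The one place where the paper does something different from what you sketch is the algebraic collapse. You propose to verify
\[
\sum_{i=1}^n\Bigl(\prod_{j\neq i}\frac{x_i-x_j+\theta}{x_i-x_j}\Bigr)\frac{1}{n}\frac{1}{x_i/\theta n-z}
=1-\prod_{j=1}^n\Bigl(1+\frac{1}{n}\frac{1}{z-x_j/\theta n}\Bigr)
\]
by matching poles and residues in $z$. That works, but the paper's argument is slicker: it first proves the scalar identity $\sum_{i=1}^n\prod_{j\neq i}\frac{x_i-x_j+\theta}{x_i-x_j}=n$ (Lemma~\ref{l:average}, via antisymmetry of $\sum_i V(\bmx+\theta\bme_i)$), and then obtains the resolvent identity above (Corollary~\ref{c:average3}) in one line by applying that lemma to the $(n{+}1)$-tuple $(x_1,\dots,x_n,\theta n z)$ and isolating the extra term. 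Your residue argument is more in the Nekrasov-equation spirit; the paper's ``add a phantom particle at $\theta n z$'' trick avoids any case analysis and gives Lemma~\ref{l:average} as a reusable byproduct (it is used again for the Perelomov--Popov/Markov--Krein construction and for showing $N^n_t$ has rate $\theta n^2$). A minor terminological point: what you write down at the end is the optional quadratic variation $[M^n,\overline{M^n}]_t$ (sum of squared jumps), not the predictable bracket $\int_0^t(\cL f^2-2f\cL f)\,\rd s$; the statement \eqref{e:qv} is about the former, so your final formula is the right one even though the sentence leading into it describes the latter.
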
 

\begin{remark}
We remark that the integrand in \eqref{e:msde} also appears in the Nekrasov's equation in \cite[Section 4]{MR3668648}, which is crucial for the proof of the central limit theorems of the discrete $\beta$ ensembles. We can view \eqref{e:msde} as a dynamical version of the Nekrasov's equation.
\end{remark}

The $\beta$-nonintersecting random walk $\bmx(t)$ is a continuous time Markov jump process. We recall its generator $\cL^n_\theta $ from \eqref{e:generator}. By It{\'o}'s formula,  
\begin{align}\label{e:ito}
M^n_t(z)\deq m^n_t(z_t)-m^n_0(z_t)-\int_0^t \del_z m^n_s(z_s)\del_t z_s\rd s-\int_{0}^t \cL^n_\theta m^n_s(z_s)\rd s,
\end{align}
is a martingale with quadratic variations given by \eqref{e:qv}.
%\begin{align}
%\langle M^n, M^n\rangle_t=\sum_{0\leq s\leq t}|m^n_s(z_s)-m^n_{s-}(z_s)|^2.
%\end{align}

To estimate the integrand $\cL_\theta^n m_s^n(z_s)$ in \eqref{e:ito}, we need some algebraic facts about the generator $\cL_\theta^n$.
\begin{lemma}\label{l:average}
For any $\theta\in \bR$, we have
\begin{align}\label{e:average}
\sum_{i=1}^{n} \prod_{j:j\neq i}\frac{x_i-x_j+\theta}{x_i-x_j}=n.
\end{align}
\end{lemma}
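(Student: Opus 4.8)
The identity to prove is that $\sum_{i=1}^n \prod_{j\neq i}\frac{x_i-x_j+\theta}{x_i-x_j}=n$ for arbitrary distinct reals (or complex numbers) $x_1,\dots,x_n$ and any $\theta$. The plan is to treat the left-hand side as a rational function of $\theta$ and identify it by evaluating finitely many quantities. Fix the $x_i$'s (distinct) and set $F(\theta)=\sum_{i=1}^n\prod_{j\neq i}\frac{x_i-x_j+\theta}{x_i-x_j}$. For each $i$, the product $\prod_{j\neq i}(x_i-x_j+\theta)$ is a polynomial in $\theta$ of degree exactly $n-1$ with leading coefficient $1$; dividing by the constant $\prod_{j\neq i}(x_i-x_j)$ keeps the degree at $n-1$. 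Hence $F(\theta)$ is a polynomial in $\theta$ of degree at most $n-1$, and it suffices to check that $F$ agrees with the constant polynomial $n$ at $n$ distinct values of $\theta$ (or, more efficiently, to match the value at one point and all derivatives, or to match $n$ points).

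The cleanest route is to recognize $F$ via a partial-fractions / Lagrange-interpolation identity. First, $F(0)=\sum_{i=1}^n 1 = n$, giving the constant term correctly. Next I would show the coefficient of every higher power of $\theta$ vanishes. Concretely, expand $\prod_{j\neq i}(x_i-x_j+\theta)=\sum_{k=0}^{n-1}e_{n-1-k}\big((x_i-x_j)_{j\neq i}\big)\,\theta^k$, so that the coefficient of $\theta^k$ in $F(\theta)$ is $\sum_{i=1}^n \frac{e_{n-1-k}((x_i-x_j)_{j\neq i})}{\prod_{j\neq i}(x_i-x_j)}$. For $1\le k\le n-1$ this is a symmetric-function sum that one recognizes as a divided difference: it equals $\sum_i \frac{p(x_i)}{\prod_{j\neq i}(x_i-x_j)}$ for a polynomial $p$ of degree $n-1-k < n-1$, and the standard fact $\sum_i \frac{p(x_i)}{\prod_{j\neq i}(x_i-x_j)}=[\text{leading coefficient of }p]$ when $\deg p\le n-1$ forces this to be $0$. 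Alternatively, and perhaps more transparently for the write-up, substitute $\theta \mapsto x_0$ for a new variable and observe that $\prod_{j\neq i}\frac{x_i-x_j+\theta}{x_i-x_j}$ becomes, up to sign, a Lagrange basis polynomial evaluated at an auxiliary node, so the whole sum telescopes to the interpolation of the constant function $1$ — which is $1$ at the node but the node contributes the value $n$ after accounting for the shift. I will pick whichever of these phrasings is shortest.

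The main obstacle is purely bookkeeping: correctly tracking the symmetric functions of the shifted variables $(x_i - x_j)_{j\neq i}$ and making sure the degree count and the ``sum of $p(x_i)/\prod_{j\neq i}(x_i-x_j)$'' evaluation are applied to the right polynomial. There is no analytic difficulty — the statement is an algebraic identity valid for all $\theta$, and continuity/density arguments are unnecessary. I expect the final proof to be three or four lines: assert $F$ is a polynomial in $\theta$ of degree $\le n-1$, compute $F(0)=n$, and kill the higher coefficients by the divided-difference vanishing (equivalently, note $F$ is independent of $\theta$ because its derivative $F'(\theta)=\sum_i\big(\prod_{j\neq i}\frac{x_i-x_j+\theta}{x_i-x_j}\big)\big(\sum_{j\neq i}\frac{1}{x_i-x_j+\theta}\big)$ is again, after clearing denominators, a sum of the same type that one shows vanishes — but the degree argument is cleaner, so I would use that).
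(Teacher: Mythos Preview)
Your main approach is correct: $F(\theta)$ is indeed a polynomial in $\theta$ of degree at most $n-1$, $F(0)=n$, and the coefficient of $\theta^k$ for $k\ge 1$ is a divided difference of a polynomial of degree $n-1-k<n-1$, hence zero. One small point worth making explicit in the write-up: the identification $e_{n-1-k}\big((x_i-x_j)_{j\neq i}\big)=p_k(x_i)$ for a \emph{single} polynomial $p_k$ independent of $i$ is not automatic from the notation, since the index set $\{j:j\neq i\}$ changes with $i$. It holds because adjoining the extra variable $x_i-x_i=0$ does not change the elementary symmetric polynomial, so $e_{n-1-k}\big((x_i-x_j)_{j\neq i}\big)=e_{n-1-k}\big((x_i-x_j)_{j=1}^n\big)$, and the latter is visibly $p_k(x_i)$ with $p_k(y)=e_{n-1-k}\big((y-x_j)_{j=1}^n\big)$. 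Equivalently, and even more cleanly, set $R(y)=\prod_{j=1}^n(y-x_j+\theta)$; then $\prod_{j\neq i}(x_i-x_j+\theta)=R(x_i)/\theta$, so $F(\theta)=\theta^{-1}\sum_i R(x_i)/\prod_{j\neq i}(x_i-x_j)$, and since $R$ is monic of degree $n$ with $y^{n-1}$-coefficient $n\theta-\sum_j x_j$, the standard Lagrange identity gives $\sum_i R(x_i)/\prod_{j\neq i}(x_i-x_j)=n\theta$, hence $F(\theta)=n$.

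This is a genuinely different route from the paper's proof. The paper fixes $\theta$ and views the numerator $\sum_i V(\bmx+\theta\bme_i)$ as a polynomial in the $x_i$'s; it is antisymmetric of degree $n(n-1)/2$, hence a scalar multiple $C(\theta,n)$ of the Vandermonde $V(\bmx)$, and comparing the coefficient of $x_1^{n-1}x_2^{n-2}\cdots x_{n-1}$ gives $C(\theta,n)=n$. Your argument instead fixes the $x_i$'s and treats $\theta$ as the variable, reducing to a one-variable Lagrange/divided-difference identity. The paper's antisymmetry argument is slightly slicker (no expansion at all), while yours has the advantage of being a direct computation that generalizes immediately to Corollary~\ref{c:average3} without the auxiliary $(n+1)$-st variable trick the paper uses there. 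The side remarks in your proposal about substituting $\theta\mapsto x_0$ and about differentiating $F$ are underdeveloped and should be dropped in favor of the clean degree argument.
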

\begin{proof}
%We denote $\bmx=(x_1,x_2,\cdots,x_n)$ and 
%\begin{align}
%V(\bmx)=\prod_{1\leq i<j\leq n}(x_i-x_j)
%\end{align}
%the Vandermond determinant  in variables $x_1,x_2,\cdots,x_n$. 
We can rewrite the left hand side of \eqref{e:average} in terms of the Vandermond determinant in variables $x_1,x_2,\cdots, x_n$,
\begin{align*}
\sum_{i=1}^{n} \prod_{j:j\neq i}\frac{x_i-x_j+\theta}{x_i-x_j}
=\sum_{i=1}^n \frac{V(\bmx + \theta \bme_i)}{V(\bmx)}.
\end{align*}
We notice that $\sum_{i=1}^n V(\bmx+\theta \bme_i)$ is a degree $n(n-1)/2$ polynomial in variables $x_1,x_2,\cdots,x_n$. More importantly, it is antisymmetric. Therefore, there exists a constant $C(\theta, n)$ depending on $\theta$ and $n$ such that 
\begin{align}\label{e:average2}
\sum_{i=1}^n V(\bmx+\theta \bme_i)=C(\theta, n)V(\bmx).
\end{align}
We conclude \eqref{e:average} from \eqref{e:average2} by comparing the coefficient of the term $x_1^{n-1}x_2^{n-2}\cdots x_{n-1}$.
\end{proof}

The following identity will be crucial for the derivation of the stochastic differential equation of the Stieltjes transforms of the empirical measure process $\mu_t$.
\begin{corollary}\label{c:average3}
For any $\theta\in \bR$, we have
\begin{align}\label{e:average3}
\sum_{i=1}^{n} \left(\prod_{j:j\neq i}\frac{x_i-x_j+\theta}{x_i-x_j}\right)\frac{1}{n}\frac{1}{x_i/\theta n-z}=1-\prod_{j=1}^n\left(1+\frac{1}{n}\frac{1}{z-x_j/\theta n}\right).
\end{align}
\end{corollary}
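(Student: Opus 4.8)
The plan is to reduce \eqref{e:average3} to two applications of Lemma \ref{l:average}, the second one on an enlarged configuration of $n+1$ points in which the spectral parameter $z$ is treated as an extra particle. First I would strip the cosmetic scalings: setting $y:=\theta n z$ one has $\frac1n\frac1{x_i/\theta n-z}=\frac{\theta}{x_i-y}$ and $1+\frac1n\frac1{z-x_j/\theta n}=\frac{y-x_j+\theta}{y-x_j}$, so \eqref{e:average3} is equivalent to
\begin{align*}
\theta\sum_{i=1}^{n}\Bigl(\prod_{j:j\neq i}\frac{x_i-x_j+\theta}{x_i-x_j}\Bigr)\frac{1}{x_i-y}=1-\prod_{j=1}^{n}\frac{y-x_j+\theta}{y-x_j}.
\end{align*}
Because $z\in\bC\setminus\bR$ while the $x_j$ are real, the numbers $x_0:=y,x_1,\dots,x_n$ are pairwise distinct, so Lemma \ref{l:average} is available both for $(x_1,\dots,x_n)$ and for $(x_0,x_1,\dots,x_n)$.

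Next I would apply Lemma \ref{l:average} to the $(n+1)$-point configuration $(x_0,x_1,\dots,x_n)$, obtaining
\begin{align*}
\sum_{i=0}^{n}\ \prod_{j:j\neq i}\frac{x_i-x_j+\theta}{x_i-x_j}=n+1,
\end{align*}
where the products now run over $j\in\{0,1,\dots,n\}$. The $i=0$ term equals exactly $\prod_{j=1}^{n}\frac{y-x_j+\theta}{y-x_j}$. For $i\geq 1$ I would peel off the factor corresponding to $j=0$, namely $\frac{x_i-x_0+\theta}{x_i-x_0}=1+\frac{\theta}{x_i-y}$, so that the sum over $i\geq 1$ becomes
\begin{align*}
\sum_{i=1}^{n}\prod_{j:j\neq i}\frac{x_i-x_j+\theta}{x_i-x_j}+\theta\sum_{i=1}^{n}\frac{1}{x_i-y}\prod_{j:j\neq i}\frac{x_i-x_j+\theta}{x_i-x_j},
\end{align*}
with the products over $j\in\{1,\dots,n\}\setminus\{i\}$. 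By Lemma \ref{l:average} applied to $(x_1,\dots,x_n)$, the first of these two sums equals $n$. Reassembling the pieces gives $\prod_{j=1}^{n}\frac{y-x_j+\theta}{y-x_j}+n+\theta\sum_{i=1}^{n}\frac{1}{x_i-y}\prod_{j:j\neq i}\frac{x_i-x_j+\theta}{x_i-x_j}=n+1$, which rearranges to the displayed reformulation, hence to \eqref{e:average3}.

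There is no genuinely hard step here: the only idea needed is to regard $z$ as an $(n+1)$-st particle located at $\theta n z$, after which everything collapses to Lemma \ref{l:average} together with the trivial splitting $\frac{x_i-y+\theta}{x_i-y}=1+\frac{\theta}{x_i-y}$. The one point deserving a word is that Lemma \ref{l:average} is invoked at the (distinct, since $\Im z\neq 0$) arguments $x_0,\dots,x_n$; equivalently, since both sides of \eqref{e:average3} are rational functions of $z$, it suffices to check the identity off the real axis. For completeness I note an alternative that avoids citing Lemma \ref{l:average}: the rational function $F(w):=\prod_{j=1}^{n}\frac{w-x_j+\theta}{w-x_j}$ has a simple pole at each $x_j$ with residue $\theta\prod_{k\neq j}\frac{x_j-x_k+\theta}{x_j-x_k}$ and satisfies $F(w)\to 1$ as $w\to\infty$; summing the residues of $F(w)/(w-y)$ over the Riemann sphere yields $\theta\sum_{j}\frac{1}{x_j-y}\prod_{k\neq j}\frac{x_j-x_k+\theta}{x_j-x_k}+F(y)=1$, which is again the reformulation above. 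I would keep the first argument, since it reuses a lemma already in hand.
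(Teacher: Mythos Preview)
Your proof is correct and follows essentially the same route as the paper: apply Lemma \ref{l:average} to the enlarged configuration $(x_1,\dots,x_n,\theta n z)$, split off the factor $\frac{x_i-\theta n z+\theta}{x_i-\theta n z}=1+\frac{\theta}{x_i-\theta n z}$, and use Lemma \ref{l:average} once more on $(x_1,\dots,x_n)$ to collapse the remaining sum to $n$. The only cosmetic difference is that you first substitute $y=\theta n z$ and then cite the lemma, whereas the paper works directly with $\theta n z$; your aside about the residue computation of $F(w)=\prod_j\frac{w-x_j+\theta}{w-x_j}$ is a nice alternative but not needed.
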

\begin{proof}
We use Lemma \ref{l:average} for the vector $(x_1,x_2,\cdots, x_n, \theta n z)$, 
\begin{align*}\begin{split}
n+1&=\sum_{i=1}^{n} \left(\prod_{j:j\neq i}\frac{x_i-x_j+\theta}{x_i-x_j}\right)\frac{x_i-\theta n z+\theta}{x_i-\theta n z}
+\prod_{j=1}^n\frac{\theta n z-x_j+\theta}{\theta n z-x_j}\\
&=\sum_{i=1}^{n} \left(\prod_{j:j\neq i}\frac{x_i-x_j+\theta}{x_i-x_j}\right)+\sum_{i=1}^{n} \left(\prod_{j:j\neq i}\frac{x_i-x_j+\theta}{x_i-x_j}\right)\frac{\theta}{x_i-\theta n z}
+\prod_{j=1}^n\left(1+\frac{\theta}{\theta n z-x_j}\right)\\
&=n+\sum_{i=1}^{n} \left(\prod_{j:j\neq i}\frac{x_i-x_j+\theta}{x_i-x_j}\right)\frac{\theta}{x_i-\theta n z}
+\prod_{j=1}^n\left(1+\frac{\theta}{\theta n z-x_j}\right).
\end{split}\end{align*}
The claim \eqref{e:average3} follows by rearranging. 
\end{proof}

\begin{remark}\label{r:proofMKcor}
We can use Corollary \ref{c:average3} to give a construction of the operator $Q$ in Theorem \ref{t:MKcor}. Let $\mu$ be a measure as in Theorem \ref{t:MKcor}. For any large integer $m>0$, we discretize $\mu$ on the scale $1/m$ and define
\begin{align*}
\mu^m\deq \frac{1}{m}\sum_{i=1}^m \delta_{y_i^m},\quad
\frac{i-1/2}{m}=\int_{y_i^m}^{\infty}\rd \mu(x), \quad 1\leq i\leq m.
\end{align*}
As $m$ goes to infinity, $\mu^m$ weakly converges to $\mu$. Since the density of $\mu$ is bounded by $1$, we have $y_{i}^m-y_{i+1}^m\geq 1/m$ for all $1\leq i\leq m-1$. The Perelomov-Popov measure is defined as
\begin{align*}
Q^m(\mu^m)\deq \frac{1}{m}\sum_{i=1}^m \prod_{j:j\neq i}\frac{y_i^m-y_{j}^m+1/m}{y_i^m-y_j^m}\delta_{y_i^m}.
\end{align*}
Since $y_{i}^m-y_{i+1}^m\geq 1/m$, $Q^m(\mu^m)$ is a positive measure. Moreover, thanks to Lemma \ref{l:average}, $Q^m(\mu^m)$ is a probability measure. We denote the Stieltjes transform of $\mu$, $\mu^m$ and $Q^m(\mu^m)$ by $m_{\mu}(z)$, $m_{\mu^m}(z)$ and $m_{Q^m(\mu^m)}(z)$ respectively. Since $\mu^m$ weakly converges to $\mu$ as $m$ goes to infinity, 
\begin{align*}
\lim_{m\rightarrow \infty}m_{\mu^m}(z)=m_\mu(z),\quad z\in \bC\setminus\bR.
\end{align*}
For the Stieltjes transform $m_{Q^m(\mu^m)}(z)$, we use Corollary \ref{c:average3},
\begin{align*}
m_{Q^m(\mu^m)}(z)
&=\frac{1}{m}\sum_{i=1}^m \prod_{j:j\neq i}\frac{y_i^m-y_{j}^m+1/m}{y_i^m-y_j^m}\frac{1}{y_i^m-z}\\
&=1-\prod_{j=1}^m\left(1-\frac{1}{m}\frac{1}{y_j^m-z}\right)
=1-\exp\left\{-\frac{1}{m}\sum_{j=1}^m\frac{1}{y_j^m-z}+\OO\left(\frac{1}{m}\right)\right\}\\
&=1-\exp\left\{-m_{\mu^m}(z)+\OO\left(\frac{1}{m}\right)\right\}\rightarrow 1-e^{-m_\mu(z)},
\end{align*}
as $m$ goes to infinity. Since 
\begin{align*}
\lim_{y\rightarrow \infty}\ri y\left(1-e^{-m_\mu(\ri y)}\right)=-1,
\end{align*}
by \cite[Theorem 1]{MR1962995}, there exists a probability measure $Q(\mu)$ with Stieltjes transform $1-e^{-m_\mu(z)}$, and $Q^m(\mu^m)$ weakly converges to $Q(\mu)$.
\end{remark}

\begin{proof}[Proof of Proposition \ref{p:msde}]

For the integrand $\cL^n_\theta m^n_s(z)$, we have
\begin{align*}\begin{split}
\cL^n_\theta m^n_s(z)
&=\theta n\sum_{i=1}^{n}\left(\prod_{j:j\neq i}\frac{x_i(s)-x_j(s)+\theta}{x_i(s)-x_j(s)}\right)\left(\frac{1}{n}\frac{1}{(x_i(s)+1)/\theta n-z_s}-\frac{1}{n}\frac{1}{x_i(s)/\theta n-z_s}\right)\\
%&=\theta\sum_{i=1}^{n}\left(\prod_{j:j\neq i}\frac{x_i(s)-x_j(s)+\theta}{x_i(s)-x_j(s)}\right)\left(\frac{1}{x_i(s)/\theta n-(z_s-1/\theta n)}-\frac{1}{x_i(s)/\theta n-z_s}\right)\\
&=\theta n\left(\prod_{j=1}^n\left(1+\frac{1}{n}\frac{1}{z_s-x_j(s)/\theta n}\right)
-\prod_{j=1}^n\left(1+\frac{1}{n}\frac{1}{(z_s-1/\theta n)-x_j(s)/\theta n}\right)\right).
\end{split}\end{align*}
where we used Corollary \ref{c:average3}. Combining with \eqref{e:ito}, this finishes the proof Proposition \ref{p:msde}
\end{proof}

\subsection{Law of large numbers}\label{s:LLN}
In this section we analyze \eqref{e:msde}, and prove the Law of large numbers for the $\beta$-nonintersecting Poisson random walk. 

We define an auxiliary process $N^n_t$, which counts the number of jumps for the $\beta$-nonintersecting Poisson random walk $\bmx(t)$,
\begin{align}\label{e:defNt}
N^n_t=\sum_{i=1}^n \left(x_i(t)-x_i(0)\right).
\end{align}
The Poisson process $N^n_t$ will be used later to control the martingale term $M^n_t(z)$ in \eqref{e:msde}.

\begin{proposition}
$N^n_t$ is a Poisson process, starting at $0$,  with jump rate $\theta n^2$.
\end{proposition}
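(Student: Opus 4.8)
The plan is to recognize $N^n_t$ as the counting process that records \emph{every} jump of $\bmx(t)$, to compute its instantaneous jump rate and observe that it equals the constant $\theta n^2$ regardless of the current configuration, and then to invoke the standard martingale characterization of the Poisson process.

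First I would note that every jump of the $\beta$-nonintersecting Poisson random walk is of the form $\bmx\mapsto \bmx+\bme_i$ for some $i\in\{1,\dots,n\}$, and such a move increases $\sum_{j=1}^n x_j$ by exactly $1$. Hence $t\mapsto N^n_t=\sum_{i=1}^n(x_i(t)-x_i(0))$ is nondecreasing, integer valued, starts at $0$, and increases by exactly $1$ at each jump time of $\bmx(t)$; in particular it is a simple counting process, since $\bmx(t)$ is a pure jump Markov process whose jumps are almost surely non-simultaneous and of unit size. Next, from the form of the generator \eqref{e:generator}, at a configuration $\bmx\in\bW^n_\theta$ the walk leaves $\bmx$ via the move $\bmx\mapsto\bmx+\bme_i$ at rate $\theta n\prod_{j:j\neq i}\frac{x_i-x_j+\theta}{x_i-x_j}$, so the total rate at which $N^n_t$ increments is
\begin{align*}
\theta n\sum_{i=1}^{n}\prod_{j:j\neq i}\frac{x_i-x_j+\theta}{x_i-x_j}=\theta n\cdot n=\theta n^2,
\end{align*}
where the first equality is Lemma \ref{l:average}. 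The key point is that this rate is independent of $\bmx$.

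To conclude, I would apply Dynkin's formula to the Markov process $(\bmx(t),N^n_t)$ with the test function $(\bmx,N)\mapsto e^{\ri\xi N}$: by the computation above its generator sends this function to $\theta n^2(e^{\ri\xi}-1)e^{\ri\xi N}$, so $e^{\ri\xi N^n_t}-1-\theta n^2(e^{\ri\xi}-1)\int_0^t e^{\ri\xi N^n_s}\rd s$ is a martingale, equivalently $\exp\!\big(\ri\xi N^n_t-\theta n^2 t(e^{\ri\xi}-1)\big)$ is a martingale for every $\xi\in\bR$; by the L\'evy--Watanabe characterization, a simple counting process with deterministic compensator $\theta n^2 t$ is a Poisson process of rate $\theta n^2$. (Equivalently, one can argue directly that, because the $N$-increment rate is the constant $\theta n^2$, the process $N^n_t$ is itself a pure birth Markov chain with constant rate $\theta n^2$, i.e.\ a Poisson process.) There is essentially no real obstacle here: the content is entirely contained in Lemma \ref{l:average}, and the only point needing a line of care is verifying that $N^n_t$ is genuinely a simple counting process, which holds because $\bmx(t)$ almost surely has no simultaneous jumps.
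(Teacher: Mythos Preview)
Your proof is correct and follows essentially the same approach as the paper: both compute the total jump rate using the generator and Lemma~\ref{l:average} to obtain the constant $\theta n^2$, and conclude that $N^n_t$ is Poisson. You supply more detail than the paper does (the simple-counting-process verification and the L\'evy--Watanabe characterization), but the substance is identical.
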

\begin{proof}
According to the generator \eqref{e:generator} of the $\beta$-nonintersecting Poisson random walk, the process $N^n_t$ increases $1$ with rate
\begin{align*}
\theta n\sum_{i=1}^n\sum_{j:j\neq i}\frac{x_i-x_j+\theta}{x_i-x_j}=\theta n^2,
\end{align*}
where we used Proposition \ref{e:average}.
\end{proof}

In the following we simplify the stochastic differential equation of $m_t(z_t)$,  i.e. the second integrand in \eqref{e:msde}.  By Proposition \ref{p:ztp}, for any $z\in \Omega_T$, and $0\leq t\leq T$, we have $|\Im[z_t]|\geq |\Im[z_T]|>0$. Therefore, we have the trivial bound $1/|x_j(s)/\theta n-z_t|=\OO(1)$, where the implicit constant depends on $\Im[z_T]$.

For the first term in the second integrand in \eqref{e:msde}, by the Tylor expansion
\begin{align}\begin{split}\label{e:t1}
&\phantom{{}={}}\prod_{j=1}^n\left(1+\frac{1}{n}\frac{1}{z_s-x_j(s)/\theta n}\right)
=\exp\left(\sum_{j=1}^n \ln\left(1+\frac{1}{n}\frac{1}{z_s-x_j(s)/\theta n}\right)\right)\\
&=\exp\left(\sum_{j=1}^n \frac{1}{n}\frac{1}{z_s-x_j(s)/\theta n}-\frac{1}{2n^2}\frac{1}{(z_s-x_j(s)/\theta n)^2}+\frac{1}{3n^3}\frac{1}{(z_s-x_j(s)/\theta n)^3}+\OO\left(\frac{1}{n^4}\right)\right)\\
%&=\exp\left(-m_s(z)-\frac{1}{2n}\del_z m_s(z_s)+\OO\left(\frac{1}{n^2}\right)\right)
&=e^{-m^n_s(z_s)}\left(1-\frac{1}{2}\frac{\del_z m^n_s(z_s)}{n}+\frac{1}{8}\frac{(\del_z m^n_s(z_s))^2}{n^2}-\frac{1}{6}\frac{\del_z^2 m^n_s(z_s)}{n^2}+\OO\left(\frac{1}{n^3}\right)\right).
\end{split}\end{align}
Similarly, for the second term in the integrand,
\begin{align}\begin{split}\label{e:t2}
&\phantom{{}={}}\prod_{j=1}^n\left(1+\frac{1}{n}\frac{1}{(z_s-1/\theta n)-x_j(s)/\theta n}\right)
=\exp\left(\sum_{j=1}^n \ln\left(1+\frac{1}{n}\frac{1}{(z_s-1/\theta n)-x_j(s)/\theta n}\right)\right)\\
%&=\exp\left(\left(\sum_{j=1}^n \frac{1}{n}\frac{1}{z-x_j(s)/\theta n}+\left(\frac{1}{\theta}-\frac{1}{2}\right)\frac{1}{n^2}\frac{1}{(z-x_j(s)/\theta n)^2}+\OO\left(\frac{1}{n^3}\right)\right)\right)\\
%&=\exp\left(-m_s(z)-\left(\frac{1}{2}-\frac{1}{\theta}\right)\frac{1}{n}\del_z m_s(z)+\OO\left(\frac{1}{n^2}\right)\right)\\
&=e^{-m^n_s(z)}\left(1-\left(\frac{1}{2}-\frac{1}{\theta}\right)\frac{\del_z m^n_s(z)}{n}+\frac{1}{2}\left(\frac{1}{\theta}-\frac{1}{2}\right)^2\frac{(\del_z m^n_s(z_s))^2}{n^2}-\right.\\
&\phantom{{}=e^{-m^n_s(z_s)}\left(1-\left(\frac{1}{2}-\frac{1}{\theta}\right)\frac{\del_z m^n_s(z_s)}{n}\right.}\left.-\frac{1}{2}\left(\frac{1}{\theta^2}-\frac{1}{\theta}+\frac{1}{3}\right)\frac{\del_z^2 m^n_s(z_s)}{n^2}+\OO\left(\frac{1}{n^3}\right)\right).
\end{split}\end{align}
The difference of \eqref{e:t1} and \eqref{e:t2} is 
\begin{align}\begin{split}\label{e:t3}
&\phantom{{}={}}\prod_{j=1}^n\left(1+\frac{1}{n}\frac{1}{(z_s-1/\theta n)-x_j(s)/\theta n}\right)-\prod_{j=1}^n\left(1+\frac{1}{n}\frac{1}{(z_s-1/\theta n)-x_j(s)/\theta n}\right)\\
&=e^{-m^n_s(z_s)}\left(-\frac{\del_z m^n_s(z_s)}{\theta n}+\left(\frac{1}{2}-\frac{1}{2\theta}\right)\frac{(\del_z m^n_s(z_s))^2-\del_z^2 m^n_s(z_s)}{\theta n^2}+\OO\left(\frac{1}{n^3}\right)\right).
\end{split}\end{align}
We can use \eqref{e:t3} to simplify the stochastic differential equation \eqref{e:msde}, 
\begin{align}\begin{split}\label{e:newsde}
m^n_t(z_t)=m^n_0(z)
&+\int_0^t \del_z m^n_s(z_s) \left(e^{-m_s(z_s)}-e^{-m^n_s(z_s)}\right)\rd s+\\
&+\left(\frac{1}{2}-\frac{1}{2\theta}\right)\int_{0}^t\frac{((\del_z m^n_s(z_s))^2-\del_z^2 m^n_s(z_s))e^{-m_s^n(z_s)}}{n}\rd s+M^n_t(z)+\OO\left(\frac{t}{n^2}\right),
\end{split}\end{align}
where the implicit constant depends on $\theta$ and $\Im[z_T]$.

In the following we estimate the martingale $M^n_t(z)$ using the Burkholder-Davis-Gundy inequality. For the quadratic variation of $M^n_t(z)$, we have
\begin{align}\begin{split}\label{e:quadvar}
&\phantom{{}={}}[ M^n(z), \overline{M^n(z)}]_t
=\sum_{0< s\leq t} |m^n_s(z_s)-m^n_{s-}(z_s)|^2\\
&=\frac{1}{n^2}\sum_{0< s\leq t\atop \Delta x_i(s)>0}\left|\frac{1}{x_i(s)/\theta n-z_s}-\frac{1}{x_i(s-)/\theta n-z_s}\right|^2
= \OO\left(\frac{N^n_t}{ n^4}\right),
\end{split}\end{align}
where the implicit constant depends on $\theta$ and $\Im[z_T]$.
It follows from the Burkholder-Davis-Gundy inequality, for any $p\geq 1$, we have
\begin{align}\begin{split}\label{e:BDG}
\bE\left[\left(\sup_{0\leq t\leq T}|M^n_t(z)|\right)^p\right]^{1/p}
&\leq  Cp \bE\left[[ M^n(z), \overline{M^n(z) }]_T^{p/2}\right]^{1/p}
\\
&=\OO\left(\frac{p}{ n^2}\bE\left[\left(N^n_T\right)^{p/2}\right]^{1/p} \right)=\OO\left(\frac{T^{1/2}p^{3/2}}{n}\right).
\end{split}\end{align}
%It follows from the Markov's inequality, and taking $p=\ln n$ in \eqref{e:BDG}
%\begin{align}
%\bP\left(\sup_{0\leq s\leq t}|M^n_t|\geq \frac{(\ln n)^2}{n}\right)
%\leq \exp\left(-c\ln n\ln\ln n\right)
%\end{align}
By the Markov's  inequality, we have 
\begin{align*}
\bP\left(\sup_{0\leq t\leq T}|M^n_t(z)|\geq \varepsilon\right)\leq \left(\frac{CT^{1/2}p^{3/2}}{\varepsilon n}\right)^p.
\end{align*}
Therefore it follows by taking $p>1$, $\sup_{0\leq t\leq T}|M^n_t(z)|$ converges to zero almost surely as $n$ goes to infinity.

%We define the event,
%\begin{align}
%\Omega(z)=\left\{\sup_{0\leq s\leq T}|M^n_t(z)|\leq \frac{(\ln n)^2}{n}\right\}.
%\end{align}
%Then on the event $\Omega(z)$,

For any $0\leq t\leq T$, we can rewrite \eqref{e:newsde} as
\begin{align*}\begin{split}
|m^n_t(z_t)-m^n_0(z)|
&\leq |m^n_0(z)-m_0(z)|+C\left(\int_0^t |m_s(z_s)-m_0(z)|\rd s+\frac{t}{n}\right)+\sup_{0\leq t\leq T}|M_t^n|\\
&=C\left(\int_0^t |m_s(z_s)-m_0(z)|\rd s\right)+|m^n_0(z)-m_0(z)|+\oo(1)
\end{split}\end{align*}
where the constant $C$ depends on $\theta$, $T$ and $\Im[z_T]$, and the term $\oo(1)$ converges to zero almost surely and is uniform for $0\leq t\leq T$. Thus, it follows from Gronwell's inequality,
\begin{align*}
\sup_{0\leq t\leq T}|m_t(z_t)-m_0(z)|\leq C|m_0^n(z)-m_0(z)|+o(1),
\end{align*}
which converges to zero almost surely (in probability), if $|m_0^n(z)-m_0(z)|$ converges to zero almost surely (in probability). This finishes the proof of Theorem \ref{t:LLN}.

\section{Central limit theorems for the empirical measure process}

In this section, we prove the central limit theorems for the rescaled empirical measure process $\{n(\mu^n_t-\mu_t)\}_{0\leq t \leq T}$ with analytic test functions. 
\subsection{Central limit theorems}\label{s:CLT}
Theorem \ref{t:CLT} follows from the following proposition.
\begin{proposition}\label{p:CLT}
We assume Assumption \ref{a:initiallaw}. Then for any values $z_1,z_2,\cdots, z_m\in \bC\setminus\bR$ and time $T<\min\{\ft(z_1),\ft(z_2),\cdots, \ft(z_m)\}$ as defined in \eqref{e:deft}, the random processes $\{(g^n_t(z_t(z_j)))_{1\leq j\leq m}\}_{0\leq t\leq T}$ converge weakly in the Skorokhod space $D([0,T], \bC^m)$ towards a Gaussian process $\{(\mathcal{G}_j(t))_{1\leq j\leq m}\}_{0\leq t\leq T}$, which is the unique solution of the system of stochastic differential equations
\begin{align}\begin{split}\label{e:limitprocess}
\cG_j(t)&=\cG_j(0)+ \int_0^t \frac{\del_zm_0(z_j)e^{-m_0(z_j)}}{1-s\del_zm_0(z_j)e^{-m_0(z_j)}} \cG_j(s)\rd s\\
&+\left(\frac{1}{2}-\frac{1}{2\theta}\right)\int_0^t\frac{((\del_z m_0(z_j))^2-\del_z^2 m_0(z_j))e^{-m_0(z_j)}}{(1-s\del_z m_0(z_j)e^{-m_0(z_j)})^3}\rd s+\cW_j(t),\quad 1\leq j\leq m,
\end{split}\end{align}
with initial data $(g_0(z_j))_{1\leq j\leq m}$ given in Assumption \ref{a:initiallaw}, and $\{(\cW_j(t))_{1\leq j\leq m}\}_{0\leq t\leq T}$ is a centered Gaussian process independent of $(\cG_j(0))_{1\leq j\leq m}$, and 
\begin{align}\begin{split}\label{e:covW1}
\langle \cW_j, \cW_k\rangle_t=-\frac{1}{\theta}\int_0^t\frac{(\del_s m_s)(z_s(z_j))+(\del_sm_s)(z_s(z_k))}{(z_s(z_j)-z_s(z_k))^2}-\frac{2(e^{-m_0(z_j)}-e^{-m_0(z_k)})}{(z_s(z_j)-z_s(z_k))^3}\rd s,\\
\langle\cW_j, \bar{\cW}_k\rangle_t=-\frac{1}{\theta}\int_0^t\frac{(\del_s m_s)(z_s(z_j))+(\del_sm_s)(z_s(\bar{z}_k))}{(z_s(z_j)-z_s(\bar{z}_k))^2}-\frac{2(e^{-m_0(z_j)}-e^{-m_0(\bar{z}_k)})}{(z_s(z_j)-z_s(\bar{z}_k))^3}\rd s,
\end{split}\end{align}
where
\begin{align}\begin{split}\label{e:covW2}
\langle \cW_j, \cW_j\rangle_t
&=\lim_{z_k\rightarrow z_j}-\frac{1}{\theta}\int_0^t\frac{(\del_s m_s)(z_s(z_j))+(\del_sm_s)(z_s(z_k))}{(z_s(z_j)-z_s(z_k))^2}-\frac{2(e^{-m_0(z_j)}-e^{-m_0(z_k)})}{(z_s(z_j)-z_s(z_k))^3}\rd s\\
&=-\frac{1}{6\theta }\int_0^t (\del_s\del^2_z m_s)(z_s(z_j))\rd s.
\end{split}\end{align}

\end{proposition}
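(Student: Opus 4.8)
The plan is to linearize the stochastic differential equation of Proposition~\ref{p:msde} --- more precisely its simplified form \eqref{e:newsde} --- around the deterministic profile $m_t(z_t)=m_0(z)$. Fix $z=z_j$, subtract $m_0(z)$ from both sides of \eqref{e:newsde} and multiply by $n$, so the left side becomes $g^n_t(z_t)=n(m^n_t(z_t)-m_0(z))$. Since $m^n_s(z_s)=m_0(z)+g^n_s(z_s)/n$, we Taylor expand $e^{-m^n_s(z_s)}=e^{-m_0(z)}\big(1-g^n_s(z_s)/n+\OO(|g^n_s(z_s)|^2/n^2)\big)$ and $\del_z m^n_s(z_s)=(\del_z m_s)(z_s)+\OO(|g^n_s|/n)$ (the latter by a Cauchy estimate), which turns $n\int_0^t\del_z m^n_s(z_s)\big(e^{-m_s(z_s)}-e^{-m^n_s(z_s)}\big)\rd s$ into $\int_0^t(\del_z m_s)(z_s)e^{-m_0(z)}g^n_s(z_s)\rd s$ up to lower order; by the first identity of Proposition~\ref{p:estimatemt} this equals $\int_0^t\frac{\del_z m_0(z)e^{-m_0(z)}}{1-s\del_z m_0(z)e^{-m_0(z)}}g^n_s(z_s)\rd s$, the first drift of \eqref{e:limitprocess}. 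The $(\tfrac12-\tfrac1{2\theta})$-term in \eqref{e:newsde} already carries the correct power of $n$; replacing $m^n_s$ by $m_s$ (legitimate by Theorem~\ref{t:LLN}) and using Proposition~\ref{p:estimatemt} to simplify $(\del_z m_s(z_s))^2-\del_z^2 m_s(z_s)=\big((\del_z m_0)^2-\del_z^2 m_0\big)/(1-s\del_z m_0 e^{-m_0})^3$ produces the second drift of \eqref{e:limitprocess}. What remains is the rescaled martingale $\cW^n_j(t)\deq nM^n_t(z_j)$.

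The crux is to show that $(\cW^n_j)_{1\le j\le m}$ converges jointly to a Gaussian martingale with the bracket prescribed in \eqref{e:covW1}--\eqref{e:covW2}. Each elementary jump of $m^n$ (a single particle $i$ moving, which happens at rate $\theta n\prod_{j'\ne i}\frac{x_i-x_{j'}+\theta}{x_i-x_{j'}}$) changes $m^n_s(z_s(z_\ell))$ by $\frac1n\big(\frac1{(x_i+1)/\theta n-z_s(z_\ell)}-\frac1{x_i/\theta n-z_s(z_\ell)}\big)=-\frac1{\theta n^2}\frac1{(x_i/\theta n-z_s(z_\ell))^2}\big(1+\OO(1/n)\big)$, using $|\Im z_s|\ge|\Im z_T|>0$ from Proposition~\ref{p:ztp}; hence the maximal jump of $\cW^n_j$ is $\OO(1/n)$ and the Lindeberg condition of the martingale central limit theorem holds automatically. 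The predictable quadratic variation of $\cW^n_j,\cW^n_k$ increases at instantaneous rate $\frac1{\theta n}\sum_{i=1}^n\big(\prod_{j'\ne i}\frac{x_i-x_{j'}+\theta}{x_i-x_{j'}}\big)\frac1{(x_i/\theta n-z_s(z_j))^2(x_i/\theta n-z_s(z_k))^2}$; applying the partial-fraction identity $\frac1{(a-w)^2(a-w')^2}=\frac1{(w-w')^2}\big(\frac1{(a-w)^2}+\frac1{(a-w')^2}\big)-\frac2{(w-w')^3}\big(\frac1{a-w}-\frac1{a-w'}\big)$ together with Corollary~\ref{c:average3} --- which gives $\frac1n\sum_i(\prod_{j'\ne i}\cdots)\frac1{x_i/\theta n-w}=1-\prod_{j'}(1+\frac1n\frac1{w-x_{j'}/\theta n})\to 1-e^{-m_s(w)}$ --- and its $z$-derivative refined by the expansion \eqref{e:t1} --- which gives $\frac1n\sum_i(\prod_{j'\ne i}\cdots)\frac1{(x_i/\theta n-w)^2}\to e^{-m_s(w)}(\del_z m_s)(w)=-(\del_s m_s)(w)|_{w=z_s(z)}$ in probability by Theorem~\ref{t:LLN} --- this rate converges to exactly the integrand of $\langle\cW_j,\cW_k\rangle_t$ in \eqref{e:covW1}. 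The diagonal case $j=k$ follows by letting $z_k\to z_j$ (a short Taylor expansion yielding \eqref{e:covW2}), and the conjugated brackets $\langle\cW_j,\bar\cW_k\rangle_t$ by replacing $z_k$ with $\bar z_k$ and using $z_s(\bar z)=\overline{z_s(z)}$, $\overline{m_s(w)}=m_s(\bar w)$.

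The remaining steps are standard once these two inputs are in place. I would first prove an a priori bound $\sup_{0\le t\le T}\bE[|g^n_t(z_t)|^2]=\OO(1)$, uniformly for base points in compact subsets of $\Omega_T$ (so that $\del_z m^n_s$ and $\del_z^2 m^n_s$ are controlled by Cauchy estimates and all $\OO(|g^n_s|^2/n)$ remainders in the linearization are negligible): this comes from squaring the linearized equation, using the Burkholder--Davis--Gundy estimate \eqref{e:BDG} for $\cW^n_j$ --- whose quadratic variation is controlled by the Poisson clock $N^n_t$ of rate $\theta n^2$ --- and Gronwall's inequality. Tightness of $\{(g^n_t(z_t(z_j)))_{1\le j\le m}\}_{0\le t\le T}$ in $D([0,T],\bC^m)$ then follows from Aldous's criterion applied to the semimartingale decomposition with the uniform drift and martingale-increment bounds. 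Any subsequential limit is continuous (maximal jump $\OO(1/n)\to 0$) and, by the martingale central limit theorem for $(\cW^n_j)_j$ together with convergence of the drift, solves the linear system \eqref{e:limitprocess} driven by the Gaussian martingale $(\cW_j)_j$; since \eqref{e:limitprocess} is linear it has a unique solution, so the whole sequence converges, establishing Proposition~\ref{p:CLT}. Finally, a linear equation driven by a Gaussian process has a Gaussian solution, and solving \eqref{e:limitprocess} by variation of constants with integrating factor $1-s\del_z m_0(z_j)e^{-m_0(z_j)}=\del_z z_s(z_j)$ yields the closed forms \eqref{e:mean0} and \eqref{e:variance0}, hence Theorem~\ref{t:CLT}.

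The step I expect to be the main obstacle is the a priori estimate for $g^n_t$: the linearized equation feeds $\del_z m^n_s$ and $\del_z^2 m^n_s$ --- one and two derivatives beyond $m^n_s$ --- back into its own right-hand side, so a naive Gronwall argument loses inverse powers of $\Im z_s$. The resolution is to run the estimate simultaneously over a whole compact family of base points in $\Omega_T$ and to invoke the monotonicity $|\Im z_s|\ge|\Im z_t|$ from Proposition~\ref{p:ztp}, so that the Cauchy-estimate losses are absorbed into constants depending only on $T$ and $\Im z_T$ and the bootstrap closes. A related technical point is that the expansions \eqref{e:t1}--\eqref{e:t3} are in powers of $1/n$ with remainders involving higher $z$-derivatives of $m^n_s$; controlling these uniformly \emph{along the characteristic flow} $s\mapsto z_s$, rather than at a fixed point, is exactly what makes the linearization rigorous.
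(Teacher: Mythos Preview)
Your proposal is correct and follows the same three-step skeleton as the paper (tightness, martingale CLT, identification of subsequential limits via the linearized SDE). There is one genuine and one cosmetic difference worth noting.

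\textbf{Quadratic variation.} You compute the \emph{predictable} bracket $\langle nM^n(z_j),nM^n(z_k)\rangle_t$ by integrating the jump rate and then invoking Corollary~\ref{c:average3} and its $z$-derivative to evaluate the weighted sums $\frac{1}{n}\sum_i(\prod_{j'\ne i}\cdots)(x_i/\theta n-w)^{-1}$ and $(x_i/\theta n-w)^{-2}$. The paper instead works with the \emph{optional} bracket $[M^n(z_j),M^n(z_k)]_t=\sum_{\text{jumps}}\Delta m^n_s(z_s(z_j))\Delta m^n_s(z_s(z_k))$ and, after the same partial-fraction decomposition, rewrites the resulting sum over jump times as a telescoping sum in $\del_z^2 m^n_s$ (diagonal case) or in $m^n_s$ and $h^n_s$ (off-diagonal), obtaining the limit from Theorem~\ref{t:LLN}. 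Your route is more algebraic and arguably cleaner; the paper's telescoping trick avoids differentiating Corollary~\ref{c:average3} but requires an Abel-summation argument together with a control on the spacing of jump times (Claim~\ref{c:waittime}). Both lead to the same integrand in \eqref{e:covW1}--\eqref{e:covW2}.

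\textbf{A priori bound.} Your anticipated obstacle --- derivative loss forcing a simultaneous estimate over a compact family of base points --- does not actually arise. In the linearized equation the coefficients $\del_z m^n_s(z_s)$, $\del_z^2 m^n_s(z_s)$, $e^{-m^n_s(z_s)}$ can all be bounded by the trivial Stieltjes estimates $|\del_z^k m^n_s(w)|\le k!\,|\Im w|^{-k-1}$ and $|m^n_s(w)|\le |\Im w|^{-1}$, using only $|\Im z_s|\ge|\Im z_T|$ from Proposition~\ref{p:ztp}; no Cauchy estimate on $g^n$ is needed. This is exactly what the paper does in \eqref{e:bound1}: the first drift is bounded by $C|g^n_s(z_s)|$ and the second by a constant, and Gronwall closes immediately at a single base point. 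The more elaborate bootstrap you describe would be needed for quantitative (e.g.\ mesoscopic) estimates, but not for the present statement.
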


\begin{proof}[Proof of Theorem \ref{t:CLT}]
We can solve for $\{(\cG_j(t))_{1\leq j\leq m}\}_{0\leq t\leq T}$ explicitly. From \eqref{e:limitprocess}, we have
\begin{align}\begin{split}\label{e:rearrange}
\rd(1-t\del_zm_0(z_j)e^{-m_0(z_j)})\cG_j(t)&=\left(\frac{1}{2}-\frac{1}{2\theta}\right)\frac{((\del_z m_0(z_j))^2-\del_z^2 m_0(z_j))e^{-m_0(z_j)}}{(1-t\del_z m_0(z_j)e^{-m_0(z_j)})^2}\rd t\\
&+(1-t\del_zm_0(z_j)e^{-m_0(z_j)})\rd\cW_j(t).
\end{split}\end{align}
We integrate both sides of \eqref{e:rearrange},
\begin{align*}
\cG_j(t)=\frac{\cG_j(0)}{1-t\del_zm_0(z_j)e^{-m_0(z_j)}}
+\left(\frac{1}{2}-\frac{1}{2\theta}\right)\frac{t((\del_z m_0(z_j))^2-\del_z^2 m_0(z_j))e^{-m_0(z_j)}}{(1-t\del_z m_0(z_j)e^{-m_0(z_j)})^2}
+ \cB_j(t),
\end{align*}
where 
\begin{align*}
\cB_j(t)=\frac{1}{(1-t\del_zm_0(z_j)e^{-m_0(z_j)})}\int_0^t (1-s\del_zm_0(z_j)e^{-m_0(z_j)})\rd\cW_j(s).
\end{align*}
By a straightforward (but tedious and lengthy) calculation, using \eqref{e:mtestimate}, \eqref{e:covW1} and \eqref{e:covW2}, we get the covariances of $\{(\cB_j(t))_{1\leq j\leq m}\}_{0\leq t\leq T}$,
\begin{align*}
\cov[ \cB_j(s), \cB_k(t)]
&=\frac{\int_0^{s\wedge t} (1-u\del_zm_0(z_j)e^{-m_0(z_j)})(1-u\del_zm_0(z_k)e^{-m_0(z_k)})\rd\langle\cW_j, \cW_k\rangle_u}{(1-s\del_zm_0(z_j)e^{-m_0(z_j)})(1-t\del_zm_0(z_k)e^{-m_0(z_k)})}\\
&=\frac{1}{\theta}\frac{1}{ (1-s\del_zm_0(z_j)e^{-m_0(z_j)})(1-t\del_zm_0(z_k)e^{-m_0(z_k)})}\\
&\times\left(\frac{1}{(z_j-z_k)^2}-\frac{(1-(s\wedge t)\del_zm_0(z_j)e^{-m_0(z_j)})(1-(s\wedge t)\del_zm_0(z_k)e^{-m_0(z_k)})}{(z_j-z_k+(s\wedge t)(e^{-m_0(z_j)}-e^{-m_0(z_k)}))^2}\right)=\sigma(s, z_j,t,z_k)\\
%&=\frac{1}{\theta(z_j-z_k)^2(1-t\del_zm_0(z_j)e^{-m_0(z_j)})(1-t\del_zm_0(z_k)e^{-m_0(z_k)})}\\
%&-\frac{1}{\theta(z_j-z_k+t(e^{-m_0(z_j)}-e^{-m_0(z_k)}))^2}=\sigma(t, z_j,z_k),\\
\cov[ \cB_j(s), \overline{\cB_k(t)}]&=\sigma(s,z_j,t, \bar z_k),
\end{align*}
and
\begin{align*}
&\phantom{{}={}}\cov[\cB_j(s), \cB_j(t)]=\frac{\int_0^{s\wedge t} (1-u\del_zm_0(z_j)e^{-m_0(z_j)})^2\rd\langle\cW_j, \cW_j\rangle_u}{(1-s\del_zm_0(z_j)e^{-m_0(z_j)})(1-t\del_zm_0(z_j)e^{-m_0(z_j)})}\\
&=\frac{(s\wedge t)e^{-m_0(z_j)}(2(\del_z m_0(z_j))^3-6\del_z m_0(z_j)\del_z^2 m_0(z_j)+2\del_z^3m_0(z_j))}{12\theta(1-(s\wedge t)\del_z m_0(z_j)e^{-m_0(z_j)})^3(1-(s\vee t)\del_z m_0(z_j)e^{-m_0(z_j)})}\\
&+\frac{(s\wedge t)^2e^{-2m_0(z_j)}((\del_z m_0(z_j))^4+3(\del_z^2 m_0(z_j))^2-2\del_z m_0(z_j)\del_z^3m_0(z_j))}{12\theta(1-(s\wedge t)\del_z m_0(z_j)e^{-m_0(z_j)})^3(1-(s\vee t)\del_z m_0(z_j)e^{-m_0(z_j)})}=\sigma(s,z_j,t, z_j)=\lim_{z_k\rightarrow z_j}\sigma(s, z_j,t, z_k).
%&=\frac{te^{-m_0(z_j)}(2(\del_z m_0(z_j))^3-6\del_z m_0(z_j)\del_z^2 m_0(z_j)+2\del_z^3m_0(z_j))}{12\theta(1-t\del_z m_0(z_j)e^{-m_0(z_j)})^4}\\
%&+\frac{t^2e^{-2m_0(z_j)}((\del_z m_0(z_j))^4+3(\del_z^2 m_0(z_j))^2-2\del_z m_0(z_j)\del_z^3m_0(z_j))}{12\theta(1-t\del_z m_0(z_j)e^{-m_0(z_j)})^4}=\sigma(t,z_j, z_j)=\lim_{z_k\rightarrow z_j}\sigma(t, z_j, z_k).
\end{align*}
This finishes the proof of Theorem \ref{t:CLT}.
\end{proof}

We divide the proof of Proposition \ref{p:CLT} into three steps. In Step one we prove  the tightness of the processes $\{(g^n_t(z_t(z_j)))_{1\leq j\leq m})\}_{0\leq t\leq T}$ as $n$ goes to infinity. In Step two, we prove that the martingale term $\{(nM_t^n(z_j))_{1\leq j\leq m}\}_{0\leq t\leq T}$ converges weakly to a centered complex Gaussian process. In Step three, we prove that the subsequential limits of $\{(g^n_t(z_t(z_j)))_{1\leq j\leq m})\}_{0\leq t\leq T}$ solve the stochastic differential equation \eqref{e:limitprocess}. Proposition \ref{p:CLT} follows from this fact and the uniqueness of the solution to \eqref{e:limitprocess}.

\begin{proof}[Proof of Proposition \ref{p:CLT}]

{\emph{Step one: tightness.}}

We first prove the tightness of the martingale term.
\begin{claim}
We assume the assumptions of Proposition \ref{p:CLT}. Then as $n$ goes to infinity, the random processes $\{(nM^n_t(z_j))_{1\leq j\leq m}\}_{0\leq t\leq T}$, and $\{(n^2[ M^n(z_j), M^n(z_k)]_t)_{1\leq j,k\leq m}\}_{0\leq t\leq T}$ are tight.

\end{claim}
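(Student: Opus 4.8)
The plan is to establish tightness coordinate by coordinate in the Skorokhod space via Aldous's criterion, using the Poisson clock $N^n_t$ from \eqref{e:defNt} to control both the size of the individual jumps and the increments of the bracket processes across random time windows. The geometric input is that, since $T<\min_{1\le j\le m}\ft(z_j)$, the deterministic characteristic curves stay uniformly away from the real axis on $[0,T]$: the map $t\mapsto\Im[z_t(z)]=\Im[z]+t\,\Im[e^{-m_0(z)}]$ is affine in $t$ and vanishes precisely at $t=\ft(z)$, so $\eta:=\min_{1\le j\le m}\min_{0\le t\le T}|\Im[z_t(z_j)]|>0$. Hence, at a jump time $s$ at which a single particle $x_i$ increases by $1$,
\begin{align*}
\bigl|m^n_s(z_s(z_j))-m^n_{s-}(z_s(z_j))\bigr|
&=\frac1n\left|\frac{1}{(x_i(s-)+1)/\theta n-z_s(z_j)}-\frac{1}{x_i(s-)/\theta n-z_s(z_j)}\right|\\
&\le\frac{C}{n^2},
\end{align*}
with $C$ depending only on $\theta$ and $\eta$.

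Feeding this estimate into \eqref{e:qv} and applying Cauchy--Schwarz gives, for all $j,k$ and all $0\le s\le t\le T$,
\begin{align*}
\bigl|n^2[M^n(z_j),M^n(z_k)]_t-n^2[M^n(z_j),M^n(z_k)]_s\bigr|\le\frac{C\,(N^n_t-N^n_s)}{n^2},
\end{align*}
and the same bound with $[M^n(z_j),M^n(z_k)]$ replaced by $[M^n(z_j),\overline{M^n(z_j)}]$. Since $N^n$ is Poisson of rate $\theta n^2$ (the unnumbered proposition above), $\sup_{0\le t\le T}N^n_t/n^2$ is bounded in probability uniformly in $n$ and $N^n_T/n^2$ is bounded in $L^2$. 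Tightness of the bracket processes is then immediate: their fixed-time marginals are tight in $\bC$ by Markov's inequality, and for stopping times $\tau_n\le\sigma_n\le\tau_n+\delta$ bounded by $T$ the strong Markov property and stationary independent increments of $N^n$ give $\bE[N^n_{\sigma_n}-N^n_{\tau_n}\mid\mathcal F_{\tau_n}]\le\bE[N^n_{\tau_n+\delta}-N^n_{\tau_n}\mid\mathcal F_{\tau_n}]=\theta n^2\delta$, whence $\bE\bigl|n^2[M^n(z_j),M^n(z_k)]_{\sigma_n}-n^2[M^n(z_j),M^n(z_k)]_{\tau_n}\bigr|\le C\theta\delta\to0$ as $\delta\to0$, uniformly in $n$. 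By Aldous's criterion (see, e.g., \cite[Chapter 3]{MR1876437}) each coordinate is tight in $D([0,T],\bC)$, hence so is the vector; as the jumps are $\OO(1/n^2)$, all subsequential limits are continuous.

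For the martingale term one runs the same argument, using in addition the $L^2$ isometry for martingales. Because the jumps of $M^n(z_j)$ are $\OO(1/n^2)$ and $N^n_T\in L^2$, $M^n(z_j)$ is a square-integrable martingale, so $\bE|nM^n_t(z_j)|^2=n^2\bE[[M^n(z_j),\overline{M^n(z_j)}]_t]\le C\bE[N^n_t/n^2]\le C\theta T$, which gives tightness of the fixed-time marginals. For stopping times $\tau_n\le\sigma_n\le\tau_n+\delta\le T$, orthogonality of martingale increments together with the bracket bound above gives
\begin{align*}
\bE\bigl|nM^n_{\sigma_n}(z_j)-nM^n_{\tau_n}(z_j)\bigr|^2
&=n^2\bE\bigl[[M^n(z_j),\overline{M^n(z_j)}]_{\sigma_n}-[M^n(z_j),\overline{M^n(z_j)}]_{\tau_n}\bigr]\\
&\le C\,\bE\Bigl[\frac{N^n_{\sigma_n}-N^n_{\tau_n}}{n^2}\Bigr]\le C\theta\delta,
\end{align*}
so $\bP(|nM^n_{\sigma_n}(z_j)-nM^n_{\tau_n}(z_j)|\ge\veps)\le C\theta\delta/\veps^2$ uniformly in $n$, and Aldous's criterion again gives tightness of $\{nM^n_\cdot(z_j)\}_n$ in $D([0,T],\bC)$, with continuous limit points; taking coordinates yields the vector-valued statement.

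The argument is essentially routine once the single-jump estimate is in hand; the step requiring the most care is the uniform lower bound $\eta>0$ on $|\Im[z_t(z_j)]|$ over $[0,T]$—which is what makes the $C/n^2$ jump bound hold with a constant independent of $s$, $i$, and $n$—together with the correct use of the strong Markov property of the Poisson clock $N^n$ to control increments of the bracket processes across random intervals of length at most $\delta$, everything else reducing to Markov's inequality and the $L^2$ martingale isometry.
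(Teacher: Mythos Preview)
Your proof is correct and uses the same basic ingredients as the paper---the uniform lower bound on $|\Im[z_t(z_j)]|$, the single-jump estimate $|\Delta m^n_s|\le C/n^2$, and the domination of bracket increments by $(N^n_{t'}-N^n_t)/n^2$---but packages them through a different tightness criterion. The paper verifies the modulus-of-continuity condition of \cite[Chapter~6, Proposition~3.26]{MR1943877}: it uses the Burkholder--Davis--Gundy inequality to get $L^p$ bounds on $\sup_{t\le t'\le t+\delta}|n(M^n_{t'}-M^n_t)|$, converts these to tail bounds by Markov's inequality, and then takes a union bound over a deterministic $\delta$-grid in $[0,T]$. You instead run Aldous's criterion with bounded stopping times and the $L^2$ martingale isometry, exploiting the strong Markov property of the Poisson clock to bound $\bE[N^n_{\sigma_n}-N^n_{\tau_n}]\le\theta n^2\delta$.

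Your route is slightly more economical---it avoids BDG and the grid union bound, working entirely in $L^2$ and $L^1$---while the paper's route yields $L^p$ control of $\sup_t|nM^n_t|$ for every $p$ (cf.\ \eqref{e:BDG}), which it reuses in \eqref{e:bound2} when proving stochastic boundedness of $g^n_t$. One small point worth making explicit: passing from coordinate-wise Aldous estimates to tightness of the vector in $D([0,T],\bC^m)$ is justified because the Aldous condition for the $\bC^m$-valued process follows from the coordinate conditions by a union bound over $1\le j\le m$, and fixed-time tightness in $\bC^m$ is likewise immediate; this is not merely ``tight in each coordinate implies tight as a vector'' in the Skorokhod topology (which can fail in general), but rather a verification of the Aldous hypotheses directly for the vector.
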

We apply the sufficient condition for tightness of \cite[Chapter 6, Proposition 3.26]{MR1943877}. We need to check the modulus conditions: for any $\varepsilon>0$ there exists a $\delta>0$ such that 
\begin{align}\label{e:modulus1}
&\bP\left(\sup_{1\leq j\leq m}\sup_{0\leq t\leq t'\leq T, t'-t\leq \delta}|n(M^n_{t'}(z_j)-M^n_t(z_j))|\geq \varepsilon\right)\leq \varepsilon,\\
\label{e:modulus2}&\bP\left(\sup_{1\leq j,k\leq m}\sup_{0\leq t\leq t'\leq T, t'-t\leq \delta}\left|n^2([ M^n(z_j), M^n(z_k)]_{t'}-[ M^n(z_j), M^n(z_k)]_t)\right|\geq \varepsilon\right)\leq \varepsilon.
\end{align}

For \eqref{e:modulus1}, since $\{M^n_{t'}(z_j)-M^n_t(z_j)\}_{t\leq t'\leq T\vee t+\delta}$ is a martingale, it follows from the Burkholder-Davis-Gundy inequality, for any $p\geq 1$, we have
\begin{align*}\begin{split}
&\phantom{{}={}}\bE\left[\left(\sup_{t\leq t'\leq T\vee t+\delta}|n(M^n_{t'}(z_j)-M^n_t(z_j))|\right)^p\right]^{1/p}\\
&\leq  Cpn \bE\left[[ M^n(z_j)-M^n_t(z_j), \overline{M^n(z_j)-M_t^n(z_j) }]_{T\vee t+\delta}^{p/2}\right]^{1/p}
\\
&=\OO\left(\frac{p}{ n}\bE\left[\left(N^n_{T\vee t+\delta}-N^n_t\right)^{p/2}\right]^{1/p} \right)=\OO\left(\delta^{1/2}p^{3/2}\right).
\end{split}\end{align*}
%It follows from the Markov's inequality, and taking $p=\ln n$ in \eqref{e:BDG}
%\begin{align}
%\bP\left(\sup_{0\leq s\leq t}|M^n_t|\geq \frac{(\ln n)^2}{n}\right)
%\leq \exp\left(-c\ln n\ln\ln n\right)
%\end{align}
where the implicit constant depends on $\theta$ and $\min_{1\leq j\leq m}|\Im[z_T(z_j)]|$. By the Markov's  inequality, we have 
\begin{align*}
\bP\left(\sup_{t\leq t'\leq T\vee t+\delta}|n(M^n_{t'}(z_j)-M^n_t(z_j))|\geq \varepsilon\right)\leq \left(\frac{C\delta^{1/2}p^{3/2}}{\varepsilon }\right)^p.
\end{align*}
Let $t_k=(k-1)\delta\vee T$ for $1\leq k\leq \lfloor1/\delta\rfloor$. By a union bound
\begin{align*}\begin{split}
&\phantom{{}={}}\bP\left(\sup_{1\leq j\leq m}\sup_{0\leq t\leq t'\leq T, t'-t\leq \delta}|n(M^n_{t'}(z_j)-M^n_t(z_j))|\geq \varepsilon\right)\\
&\leq \frac{m}{\delta} \sup_{1\leq j\leq m}\sup_{1\leq k\leq \lfloor 1/\delta \rfloor}\bP\left(\sup_{t_k\leq t\leq T\vee(t_k+\delta)}|n(M^n_{t}(z_j)-M^n_{t_k}(z_j))|\geq \varepsilon/2\right)\\
&\leq \frac{m}{\delta}\left(\frac{2C\delta^{1/2}p^{3/2}}{\varepsilon}\right)^p\leq \varepsilon.
%
%&\leq \frac{2^p m}{\varepsilon^p \delta } \sup_{1\leq j\leq m}\sup_{1\leq k\leq \lfloor 1/\delta \rfloor}\bE\left[\left(\sup_{t_k\leq t\leq T\vee(t_k+\delta)}|n(M^n_{t}(z_j)-M^n_{t_k}(z_j))|\right)^p\right]\\
%&\leq \frac{2^p m}{\varepsilon^p \delta }(C\delta^{1/2}p^{3/2})^{p}\leq \varepsilon, 
\end{split}\end{align*}
if we take $p>2$ and $\delta$ small enough. This finishes the proof of \eqref{e:modulus1}.

The modulus of the process $n^2[M^n(z_j), M^n(z_k)]_{t}$ is dominated by the Poisson process $n^{-2}N_t^n$ in the following sense. For any  $0\leq t\leq T$ and $t\leq t'\leq T\vee(t+\delta)$,
\begin{align}\begin{split}\label{e:control}
&\phantom{{}={}}\left|n^2([ M^n(z_j), M^n(z_k)]_{t'}-[ M^n(z_j), M^n(z_k)]_t)\right|\\
&\leq n^2\sum_{t< s\leq t'} |m^n_s(z_s(z_j))-m^n_{s-}(z_s(z_j))||m^n_s(z_s(z_k))-m^n_{s-}(z_s(z_k))|\\
&=\sum_{t< s\leq t'\atop \Delta x_i(s)>0}\left|\frac{1}{x_i(s)/\theta n-z_s(z_j)}-\frac{1}{x_i(s-)/\theta n-z_s(z_j)}\right|\left|\frac{1}{x_i(s)/\theta n-z_s(z_k)}-\frac{1}{x_i(s-)/\theta n-z_s(z_k)}\right|\\
&= \OO\left(\frac{N^n_{t'}-N^n_t}{ n^2}\right),
\end{split}\end{align}
where the implicit constant depends on $\theta$ and $\min_{1\leq j\leq m}|\Im[z_T(z_j)]|$.
By the same argument as for \eqref{e:modulus1}, we have that $n^{-2}N_t^n$ satisfies the modulus condition:
\begin{align}
\label{e:modulus3}\bP\left(\sup_{0\leq t\leq t'\leq T, t'-t\leq \delta}\left|\frac{N_{t'}^n-N_t^n}{n^2}\right|\geq \varepsilon\right)\leq \varepsilon.
\end{align}
The claim \eqref{e:modulus2} follows from combining \eqref{e:control} and \eqref{e:modulus3}. 
%By the Markov's  inequality, we have 
%\begin{align}
%\bP\left(\sup_{0\leq t\leq T}|M^n_t(z)|\geq \varepsilon\right)\leq \left(\frac{CT^{1/2}p^{3/2}}{\varepsilon n}\right)^p.
%\end{align}
%Therefore it follows by taking $p>1$, $\sup_{0\leq t\leq T}|M^n_t(z)|$ converges to zero almost surely as $n$ goes to infinity.
%
%Let $t_k=(k-1)\delta\vee T$ for $1\leq j\leq \lfloor1/\delta\rfloor$. By a union bound
%\begin{align}\begin{split}
%&\phantom{{}={}}\bP\left(\sup_{1\leq j\leq m}\sup_{0\leq t\leq t'\leq T, t'-t\leq \delta}|n(M^n_{t'}(z_j)-M^n_t(z_j))|\geq \varepsilon\right)\\
%&\leq \frac{m}{\delta} \sup_{1\leq j\leq m}\sup_{1\leq k\leq \lfloor 1/\delta \rfloor}\bP\left(\sup_{t_k\leq t\leq T\vee(t_k+\delta)}|n(M^n_{t}(z_j)-M^n_{t_k}(z_j))|\geq \varepsilon/2\right)\\
%&\leq \frac{2^p m}{\varepsilon^p \delta } \sup_{1\leq j\leq m}\sup_{1\leq k\leq \lfloor 1/\delta \rfloor}\bE\left[\left(\sup_{t_k\leq t\leq T\vee(t_k+\delta)}|n(M^n_{t}(z_j)-M^n_{t_k}(z_j))|\right)^p\right]\\
%&\leq \frac{2^p m}{\varepsilon^p \delta }(C\delta^{1/2}p^{3/2})^{p}\leq \varepsilon, 
%\end{split}\end{align}
%if we take $p>2$ and $\delta$ small enough. This finishes the proof of the \eqref{e:modulus1}.

\begin{claim}
We assume the assumptions of Proposition \ref{p:CLT}. Then as $n$ goes to infinity, the random processes $\{(g^n_t(z_t(z_j)))_{1\leq j\leq m})\}_{0\leq t\leq T}$ are tight.
\end{claim}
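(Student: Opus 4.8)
The plan is to work with the shorthand $G^n_j(t)\deq g^n_t(z_t(z_j))=n\big(m^n_t(z_t(z_j))-m_t(z_t(z_j))\big)$ for $1\le j\le m$. First I would subtract the deterministic identity $m_t(z_t(z_j))=m_0(z_j)$ (which holds along characteristics) from the simplified stochastic differential equation \eqref{e:newsde} and multiply by $n$; this shows $G^n_j$ solves an integral equation whose right-hand side is $g^n_0(z_j)$, plus a drift $\int_0^t(\cdots)\,\rd s$ with integrand $n\,\del_zm^n_s(z_s(z_j))\big(e^{-m_s(z_s(z_j))}-e^{-m^n_s(z_s(z_j))}\big)+(\tfrac12-\tfrac1{2\theta})\big((\del_zm^n_s)^2-\del_z^2m^n_s\big)e^{-m^n_s}$ evaluated at $z_s(z_j)$, plus the rescaled martingale $nM^n_t(z_j)$, plus an $\OO(t/n)$ remainder. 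The first piece of the integrand is bounded by $C|G^n_j(s)|$: indeed $|e^{-a}-e^{-b}|\le e^{|a|\vee|b|}|a-b|$, and since $m^n_s,m_s$ are Stieltjes transforms one has $|m^n_s(z_s(z_j))|,|m_s(z_s(z_j))|\le|\Im[z_s(z_j)]|^{-1}$ and $|\del_zm^n_s(z_s(z_j))|\le|\Im[z_s(z_j)]|^{-2}$; crucially these constants are uniform over $s\le T$ and $n$ because $|\Im[z_s(z_j)]|\ge|\Im[z_T(z_j)]|>0$ by Proposition \ref{p:ztp}, which applies since $T<\ft(z_j)$, i.e., $z_j\in\Omega_T$. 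The second piece of the integrand is bounded by a deterministic constant $C$ for the same reason.

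Second, I would establish the a priori bound $\sup_n\bE\big[\sup_{0\le t\le T}\max_{1\le j\le m}|G^n_j(t)|^2\big]<\infty$. Pathwise the estimates above give $\sup_{0\le u\le t}|G^n_j(u)|\le a^n_j+C\int_0^t\sup_{0\le u\le s}|G^n_j(u)|\,\rd s$ with $a^n_j\deq|g^n_0(z_j)|+CT+n\sup_{0\le t\le T}|M^n_t(z_j)|+\OO(T/n)$, so Gronwall's inequality yields $\sup_{0\le t\le T}|G^n_j(t)|\le a^n_je^{CT}$. Taking second moments: $\bE[|g^n_0(z_j)|^2]\le\fa|\Im[z_j]|^{-2}$ by Assumption \ref{a:initiallaw}, and the Burkholder--Davis--Gundy estimate \eqref{e:BDG} rescaled by $n$ gives $n\,\bE\big[(\sup_{0\le t\le T}|M^n_t(z_j)|)^2\big]^{1/2}=\OO(1)$; hence $\bE[(a^n_j)^2]$ is bounded uniformly in $n$, and the a priori bound follows.

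Third, I would deduce tightness from the sufficient condition \cite[Chapter 6, Proposition 3.26]{MR1943877}, following the same scheme as for the martingale term above. On any interval $[t,t']\subset[0,T]$ with $t'-t\le\delta$, the drift increment of $G^n_j$ is bounded by $\delta\big(C\sup_{0\le s\le T}|G^n_j(s)|+C\big)$, which is $\le\e$ with probability $\ge1-\e$ for $\delta$ small enough, uniformly in $n$, by the a priori bound together with Markov's inequality; the rescaled martingale increment $n(M^n_{t'}(z_j)-M^n_t(z_j))$ satisfies the modulus condition \eqref{e:modulus1} already established in the previous claim; and the $\OO(\delta/n)$ remainder is negligible. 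Hence $G^n_j$ satisfies the modulus condition, and since moreover $G^n_j(0)=g^n_0(z_j)$ is tight (it converges) by Assumption \ref{a:initiallaw}, the processes $\{(G^n_j(t))_{1\le j\le m}\}_{0\le t\le T}$ are tight in $D([0,T],\bC^m)$.

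The main obstacle is the a priori $L^2$ bound of the second step, i.e., ruling out blow-up of $g^n_t$ along the characteristics before time $T$. What makes it work is that the drift of $G^n_j$ is genuinely linear in $G^n_j$ with a bounded \emph{deterministic} coefficient, so Gronwall closes the estimate; and this in turn hinges on evaluating everything along the characteristic line $z_t(z_j)$, which by Proposition \ref{p:ztp} keeps a fixed distance from $\bR$ throughout $[0,T]$ precisely when $T<\ft(z_j)$. At a fixed spectral parameter whose characteristic reaches $\bR$ before time $T$ the coefficient would blow up and the argument would break, which is why the hypothesis $T<\min_j\ft(z_j)$ is imposed; everything else (Gronwall, Burkholder--Davis--Gundy, and the modulus estimates) is routine, with the martingale-side tightness supplied by the preceding claim.
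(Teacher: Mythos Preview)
Your proposal is correct and follows essentially the same route as the paper: rewrite the SDE \eqref{e:newsde} along characteristics as an integral equation for $G^n_j$, bound the two drift terms by $C|G^n_j(s)|$ and $C$ respectively (using that $|\Im[z_s(z_j)]|\ge|\Im[z_T(z_j)]|>0$ via Proposition \ref{p:ztp}), apply Gronwall together with the BDG estimate \eqref{e:BDG} to obtain an a priori bound on $\sup_{0\le t\le T}|G^n_j(t)|$, and then verify the modulus condition of \cite[Chapter 6, Proposition 3.26]{MR1943877} by splitting the increment into drift plus martingale, the martingale part being handled by the previous claim. The only cosmetic difference is that you establish a uniform $L^2$ bound (via pathwise Gronwall and then second moments) whereas the paper phrases the a priori control as stochastic boundedness in probability; either form suffices for the tightness argument, and both rest on the same Gronwall/BDG combination.
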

We apply the sufficient condition for tightness of \cite[Chapter 6, Proposition 3.26]{MR1943877}, and check the modulus condition: for any $\varepsilon>0$ there exists a $\delta>0$ such that 
\begin{align}\label{e:modulus}
\sup_{n\geq 1}\bP\left(\sup_{1\leq i\leq m}\sup_{0\leq t\leq t'\leq T, t'-t\leq \delta}|g^n_{t'}(z_{t'}(z_j))-g^n_t(z_t(z_j))|\geq \varepsilon\right)\leq \varepsilon.
\end{align}

Before we prove the modulus condition \eqref{e:modulus}, we first prove that as $n$ goes to infinity, the random processes $\{(g_t^n(z_t(z_j)))_{1\leq j\leq m}\}_{0\leq t\leq T}$ are stochastically bounded, i.e. for any $\varepsilon>0$, there exists $M>0$ such that 
\begin{align}\label{e:stBound}
\sup_{n\geq 1}\bP\left(\sup_{1\leq j\leq m}\sup_{0\leq t\leq T}|g^n_t(z_t(z_j))|\geq M\right)\leq \varepsilon.
\end{align}
By rearranging \eqref{e:newsde}, for any $1\leq i \leq m$, we get
\begin{align}\begin{split}\label{e:gsde1}
g_t^n(z_t(z_j))&=g_0^n(z_j)+\int_0^t \del_z m^n_s(z_s(z_j))e^{-m_0(z_j)} n\left(1-e^{-(m^n_s(z_s(z_j))-m_0(z_j))}\right)\rd s+\\
&+\left(\frac{1}{2}-\frac{1}{2\theta}\right)\int_{0}^t\left((\del_z m^n_s(z_s(z_j)))^2-\del_z^2 m^n_s(z_s(z_j))\right)e^{-m_s^n(z_s(z_j))}\rd s+nM^n_t(z_j)+\OO\left(\frac{t}{n}\right).
\end{split}\end{align}
For the second and third terms on the righthand side of \eqref{e:gsde1}, we have 
\begin{align}\begin{split}\label{e:bound1}
\left|\del_z m^n_s(z_s(z_j))e^{-m_0(z_j)} n\left(1-e^{-(m^n_s(z_s(z_j))-m_0(z_j))}\right)\right|&\leq C|g_s^n(z_s(z_j))|,\\
\left|(\del_z m^n_s(z_s(z_j)))^2-\del_z^2 m^n_s(z_s(z_j))e^{-m_s^n(z_s(z_j))}\right|&\leq C,
\end{split}
\end{align}
where the constants $C$ depends on $\min_{1\leq j\leq m}|\Im[z_T(z_j)]|$. From \eqref{e:BDG}, we have
\begin{align}\begin{split}\label{e:bound2}
\bE\left[\left(\sup_{0\leq t\leq T}|nM^n_t(z_j)|\right)^p\right]^{1/p}
&\leq  CT^{1/2}p^{3/2},
\end{split}\end{align}
where the constant $C$ depends on $\theta$ and $\min_{1\leq j\leq m}|\Im[z_T(z_j)]|$. Combining \eqref{e:bound1}, \eqref{e:bound2} with the Gronwall's inequality, we get that the process $\{(g_t^n(z_t(z_j))_{1\leq j\leq m}\}_{1\leq t\leq T}$ is stochastically bounded \eqref{e:stBound}.
%, i.e. for any $\varepsilon>0$, there exists $M>0$ such that 
%\begin{align}\label{e:stBound}
%\sup_{n\geq 1}\bP\left(\sup_{1\leq j\leq m}\sup_{0\leq t\leq T}|g^n_t(z_t(z_j))|\geq M\right)\leq \varepsilon.
%\end{align}

On the event $\{\sup_{1\leq j\leq m}\sup_{0\leq t\leq T}|g^n_t(z_t(z_j))|\leq M\}$, for any $0\leq t\leq T$ and $t\leq t'\leq T\vee t+\delta$, we have
\begin{align*}
|g^n_{t'}(z_{t'}(z_j))-g^n_t(z_t(z_j))|\leq C(M+1)\delta+n(M_{t'}^n(z_j)-M_t^n(z_j))+\OO\left(\frac{t}{n}\right).
\end{align*}
The claim \eqref{e:modulus} follows from the tightness of $\{(nM_t^n(z_j))_{1\leq j\leq m}\}_{0\leq t\leq T}$.

\emph{ Step two: weak convergence of the martingale term.}

We define a sequence of stopping times $\tau_0^n, \tau^n_1, \tau^n_2, \tau^n_3, \cdots$, where $\tau_0^n=0$ and for $l\geq 1$, $\tau^n_l$  is the time of the $l$-th jump of the Poisson process $N_t^n$. The following estimate follows from the tail estimate of the exponential random variables.

\begin{claim}\label{c:waittime}
Fix $\theta>0$ and time $T>0$. For any $\varepsilon>0$, there exists a $M>0$ such that 
\begin{align}\label{e:waittime}
\sup_{n\geq1}\bP\left(\sup_{0<\tau^n_j\leq t} |\tau^n_j-\tau^n_{j-1}|\geq \frac{M\ln n}{n^2}\right)\leq \varepsilon
\end{align}
\end{claim}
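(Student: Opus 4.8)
The plan is to reduce the statement to a routine large-deviations estimate for the maximum of a controlled number of independent exponential waiting times. Writing $E_j=\tau^n_j-\tau^n_{j-1}$ for the successive inter-arrival times of the Poisson process $N^n_t$, the strong Markov (renewal) description of a rate-$\theta n^2$ Poisson process gives that the $E_j$ are i.i.d.\ with $\bP(E_j\geq a)=e^{-\theta n^2 a}$. If $\tau^n_j\leq t$ then $j\leq N^n_t$, and the gaps appearing in the supremum are exactly $E_1,\dots,E_{N^n_t}$, so
\begin{align*}
\bP\left(\sup_{0<\tau^n_j\leq t}|\tau^n_j-\tau^n_{j-1}|\geq \frac{M\ln n}{n^2}\right)\leq \bP\left(\max_{1\leq j\leq N^n_t}E_j\geq \frac{M\ln n}{n^2}\right).
\end{align*}

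First I would split according to whether $N^n_t$ is abnormally large. Since $N^n_t\leq N^n_T$ and $N^n_T$ is Poisson with mean $\theta n^2 T$, a Chernoff bound gives $\bP(N^n_t> 2\theta n^2 T)\leq e^{-cn^2}$ with $c=\theta T(2\ln 2-1)>0$. On the complementary event $\{N^n_t\leq 2\theta n^2 T\}$ one has $\max_{1\leq j\leq N^n_t}E_j\leq\max_{1\leq j\leq \lceil 2\theta n^2 T\rceil}E_j$, so a union bound together with the exponential tail yields
\begin{align*}
\bP\left(\max_{1\leq j\leq N^n_t}E_j\geq \frac{M\ln n}{n^2}\right)\leq e^{-cn^2}+\lceil 2\theta n^2 T\rceil\,e^{-\theta M\ln n}=e^{-cn^2}+\OO(n^{2-\theta M}).
\end{align*}
Choosing $M>2/\theta$ makes the right-hand side tend to $0$ as $n\to\infty$, hence below $\varepsilon$ for all $n\geq n_0(\varepsilon,\theta,T,M)$. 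For the finitely many remaining $n<n_0$ one simply enlarges $M$: for each fixed $n$ the left-hand side is the probability that an almost surely finite random variable exceeds a threshold increasing in $M$, so it tends to $0$ as $M\to\infty$, and a single large enough $M$ serves all of them. This establishes \eqref{e:waittime}.

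There is no genuine obstacle here; the content is a standard Poisson/exponential tail computation. The only two points requiring a little care are (i) identifying the gaps appearing in the supremum with the first $N^n_t$ members of an i.i.d.\ exponential family — which is exactly the renewal structure of $N^n_t$ recorded above — so that the elementary union bound applies, and (ii) arranging the estimate to be uniform in $n$ rather than merely asymptotic, which is why the argument is split into the Chernoff bound controlling $N^n_t$ together with the trivial enlargement of $M$ handling the small $n$.
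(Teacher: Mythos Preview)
Your proof is correct and follows essentially the same approach as the paper: the exponential tail bound $\bP(E_j\geq M\ln n/n^2)=n^{-\theta M}$ followed by a union bound over the inter-arrival times. The paper's argument is a two-line sketch that simply says ``the claim follows from [the tail bound] and a union bound,'' whereas you have made explicit the control on the random number $N^n_t$ of terms (via a Chernoff bound) and the uniformity in $n$; these refinements are welcome but do not change the underlying idea.
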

Since the waiting time of $N_t^n$ is an exponential random variable of rate $\theta n^2$, for any $j\geq 1$, we have
\begin{align}\label{e:waittime2}
\bP\left(|\tau^n_j-\tau^n_{j-1}|\geq \frac{M\ln n}{n^2}\right)=\exp\left\{-\theta n^2\frac{M\ln n}{n^2}\right\}=n^{-\theta M}.
\end{align}
The claim \eqref{e:waittime} follows from \eqref{e:waittime2} and a union bound.
\begin{claim}\label{c:Mconverge}
We assume the assumptions of Proposition \ref{p:CLT}. Then as $n$ goes to infinity, the complex martingales $\{(nM_t^n(z_j))_{1\leq j\leq m}\}_{0\leq t\leq T}$ converge weakly in $D([0,T], \bC^m)$ towards a centered complex Gaussian process $\{(\cW_j(t))_{1\leq j\leq m}\}_{0\leq t\leq T}$, with quadratic variation given by \eqref{e:covW1} and \eqref{e:covW2}.
\end{claim}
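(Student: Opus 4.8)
The plan is to apply a functional martingale central limit theorem to the $\bR^{2m}$-valued martingale formed by the real and imaginary parts of $(nM^n_t(z_j))_{1\leq j\leq m}$. Two inputs are needed: (i) the jumps vanish uniformly, $\sup_{0\leq t\leq T}\max_{1\leq j\leq m}|nM^n_t(z_j)-nM^n_{t-}(z_j)|\to 0$; and (ii) the predictable quadratic covariations $\langle nM^n(z_j),nM^n(z_k)\rangle_t$ and $\langle nM^n(z_j),\overline{nM^n(z_k)}\rangle_t$ converge in probability, uniformly on $[0,T]$, to the deterministic functions $\langle\cW_j,\cW_k\rangle_t$ and $\langle\cW_j,\bar\cW_k\rangle_t$ of \eqref{e:covW1}--\eqref{e:covW2}. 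Input (i) is immediate: at a jump time $s$ a single particle $x_i$ moves to $x_i+1$, so
\begin{align*}
\bigl|m^n_s(z_s(z_j))-m^n_{s-}(z_s(z_j))\bigr|=\frac1n\left|\frac{1}{(x_i(s)+1)/\theta n-z_s(z_j)}-\frac{1}{x_i(s)/\theta n-z_s(z_j)}\right|=\OO(n^{-2}),
\end{align*}
with implicit constant depending only on $\min_{1\leq j\leq m}|\Im[z_T(z_j)]|$, which is positive by Proposition \ref{p:ztp}; hence the jumps of $nM^n(z_j)$ are $\OO(n^{-1})$ deterministically.

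For (ii) I would use the standard formula for the predictable bracket of a pure-jump martingale together with the generator \eqref{e:generator}: writing $\Delta_i$ for the increment under $x_i\mapsto x_i+1$,
\begin{align*}
\frac{d}{ds}\langle nM^n(z_j),nM^n(z_k)\rangle_s=\theta n\sum_{i=1}^n\left(\prod_{l:l\neq i}\frac{x_i(s)-x_l(s)+\theta}{x_i(s)-x_l(s)}\right)\bigl(n\Delta_i m^n_s(z_s(z_j))\bigr)\bigl(n\Delta_i m^n_s(z_s(z_k))\bigr).
\end{align*}
Since $n\Delta_i m^n_s(w)=-\frac{1}{\theta n}\frac{1}{(x_i(s)/\theta n-w)^2}+\OO(n^{-2})$ and $\sum_i\prod_{l:l\neq i}\frac{x_i-x_l+\theta}{x_i-x_l}=n$ by Lemma \ref{l:average}, the right-hand side equals $\frac1\theta\int\frac{\rd P^n_s(a)}{(a-z_s(z_j))^2(a-z_s(z_k))^2}+\OO(n^{-1})$, where $P^n_s=\frac1n\sum_i\bigl(\prod_{l:l\neq i}\frac{x_i(s)-x_l(s)+\theta}{x_i(s)-x_l(s)}\bigr)\delta_{x_i(s)/\theta n}$ is the Perelomov--Popov measure, which is a probability measure by Lemma \ref{l:average}. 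By Corollary \ref{c:average3}, its Stieltjes transform is $1-\prod_{l}(1+\frac1n\frac{1}{w-x_l(s)/\theta n})=1-e^{-m^n_s(w)+\OO(n^{-1})}$, which by the law of large numbers (Theorem \ref{t:LLN}) converges to $1-e^{-m_s(w)}=m_{Q(\mu_s)}(w)$; decomposing $\frac{1}{(a-w_j)^2(a-w_k)^2}$ by partial fractions into $\frac{1}{a-w}$ and $\frac{1}{(a-w)^2}$ terms with $w\in\{z_s(z_j),z_s(z_k)\}$ (so that convergence of $m_{P^n_s}$ and, by Vitali's theorem, of its derivative at those two points suffices) gives
\begin{align*}
\lim_{n\to\infty}\frac{d}{ds}\langle nM^n(z_j),nM^n(z_k)\rangle_s=\frac1\theta\int\frac{\rd Q(\mu_s)(a)}{(a-z_s(z_j))^2(a-z_s(z_k))^2}.
\end{align*}
It then remains to evaluate this integral. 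Using $\int\frac{\rd Q(\mu_s)(a)}{(a-w)^2}=\del_w\bigl(1-e^{-m_s(w)}\bigr)=-(\del_s m_s)(w)$ and $\int\frac{\rd Q(\mu_s)(a)}{(a-w)^4}=\frac16\del_w^3\bigl(1-e^{-m_s(w)}\bigr)=-\frac16(\del_s\del_w^2 m_s)(w)$---the identity $(\del_s m_s)(w)=-\del_w(1-e^{-m_s(w)})$ being the content of Proposition \ref{p:estimatemt} and valid for every $w\in\bC\setminus\bR$ because $z_s$ maps $\Omega_s$ onto $\bC\setminus\bR$ by Proposition \ref{p:ztp}---together with $m_{Q(\mu_s)}(z_s(z_j))=1-e^{-m_0(z_j)}$, a short partial-fraction computation reproduces exactly the integrand of \eqref{e:covW1} and, in the degenerate case, of \eqref{e:covW2}. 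The conjugate bracket is handled identically using $\overline{m^n_s(w)}=m^n_s(\bar w)$ and $\overline{z_s(z)}=z_s(\bar z)$, which amounts to replacing $z_k$ by $\bar z_k$ and yields $\langle\cW_j,\bar\cW_k\rangle$.

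To finish, pointwise-in-$s$ convergence in probability of these integrands (from Theorem \ref{t:LLN}) together with the uniform bound coming from $\min_j|\Im[z_T(z_j)]|>0$ gives, by dominated convergence, convergence in probability of $\langle nM^n(z_j),nM^n(z_k)\rangle_t$ for each $t\in[0,T]$; since both the prelimit bracket and its limit are Lipschitz in $t$ with an $n$-independent constant, this upgrades to uniform convergence on $[0,T]$, and the same bound shows the optional bracket of \eqref{e:qv} has the same limit (it differs from the predictable one by a martingale of $L^2$ size $\OO(n^{-1})$ on $[0,T]$). With (i) and (ii) in hand, the martingale central limit theorem yields weak convergence of $(nM^n(z_j))_{1\leq j\leq m}$ in $D([0,T],\bC^m)$ to a continuous centered Gaussian martingale started at $0$ whose brackets are \eqref{e:covW1}--\eqref{e:covW2}; the limit is automatically continuous since both the prelimit jumps and the limiting bracket's jumps vanish. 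The main obstacle is the algebraic step in (ii)---extracting the correct summation identity from Corollary \ref{c:average3} and matching the resulting expression term by term with the explicit formulas \eqref{e:covW1}--\eqref{e:covW2} through Proposition \ref{p:estimatemt}; the probabilistic steps (the jump bound, the LLN-driven bracket convergence, and invoking the CLT) are routine.
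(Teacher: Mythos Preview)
Your proof is correct and complete, but it takes a genuinely different route from the paper's. The paper works with the \emph{optional} quadratic variation $[M^n(z_j),M^n(z_k)]_t$ from \eqref{e:qv}: it writes the sum over jumps as a telescoping sum (after expanding each jump increment to leading order in $1/n$), introduces the jump times $\tau^n_l$ of the Poisson process $N^n_t$, and uses Claim~\ref{c:waittime} on the maximal gap $\sup_l|\tau^n_l-\tau^n_{l-1}|$ to convert the telescoping sum into an integral of $\partial_s\partial_z^2 m_s$ (for $j=k$) or of the analogous combinations of $m_s$ and $h_s$ (for $j\neq k$), finally invoking the law of large numbers pointwise. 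Tightness of the brackets is handled separately in Step one, so only fixed-time convergence is argued here.

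You instead compute the \emph{predictable} bracket via the generator, which produces directly an integral in $s$ of a functional of the Perelomov--Popov measure $P^n_s$; the limit is then identified through Corollary~\ref{c:average3} and the Markov--Krein relation $m_{Q(\mu_s)}=1-e^{-m_s}$, after a partial-fraction reduction and Vitali. This is conceptually cleaner---it bypasses the stopping-time bookkeeping entirely and makes the appearance of $Q(\mu_s)$ transparent---at the cost of invoking the generator formula for the angle bracket and a normal-families argument for derivatives. The paper's approach is more hands-on but self-contained; yours exposes more structure. Both feed the same martingale CLT (the paper cites \cite[Chapter~7, Theorem~1.4]{MR838085}), and your remark that the optional and predictable brackets differ by an $L^2$-small martingale reconciles the two computations.
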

We notice that $\overline{M_t^n(z_j)}=M_t^n(\bar{z}_j)$. Claim \ref{c:Mconverge} follows from \cite[Chapter 7,  Theorem 1.4]{MR838085} and the weak convergence of the quadratic variations,
\begin{align}
\label{e:var}n^2[ M^n(z_j), M^n(z_j) ]_t
&\Rightarrow -\frac{1}{6\theta }\int_0^t (\del_s\del^2_z m_s)(z_s(z_j))\rd s, \\
\label{e:cov}n^2[ M^n(z_j), M^n(z_k) ]_t
&\Rightarrow-\frac{1}{\theta}\int_0^t\frac{(\del_s m_s)(z_s(z_j))+(\del_sm_s)(z_s(z_k))}{(z_s(z_j)-z_s(z_k))^2}-\frac{2(e^{-m_0(z_j)}-e^{-m_0(z_k)})}{(z_s(z_j)-z_s(z_k))^3}\rd s.
\end{align}
Thanks to \eqref{e:modulus2}, we know that the processes $\{(n^2[M^n_t(z_j), M^n_t(z_k)]_t)_{1\leq j,k\leq m}\}_{0\leq t\leq T}$ are tight. For \eqref{e:var} and \eqref{e:cov}, it remains to prove the weak convergence of any fixed time.

By definition, the quadratic variation $n^2[M^n_t(z_j), M^n_t(z_k)]_t$ is given by
\begin{align}\begin{split}\label{e:quadvar2}
n^2[ M^n(z_j), M^n(z_k) ]_t
&=
\sum_{0< s\leq t\atop \Delta x_i(s)>0}\left(\frac{1}{x_i(s)/\theta n-z_s(z_j)}-\frac{1}{x_i(s-)/\theta n-z_s(z_j)}\right)\\
&\phantom{{}=
\sum_{0< s\leq t\atop \Delta x_i(s)>0}}\left(\frac{1}{x_i(s)/\theta n-z_s(z_k)}-\frac{1}{x_i(s-)/\theta n-z_s(z_k)}\right)\\
&=\sum_{0< s\leq t\atop \Delta x_i(s)>0}\frac{1}{(\theta n)^2}\frac{1}{(x_i(s)/\theta n-z_s(z_j))^2(x_i(s)/\theta n-z_s(z_k))^2}+\OO\left(\frac{N^n_t}{  n^3}\right),
\end{split}\end{align}
where the implicit constant depends on $\theta$ and $\min_{1\leq j\leq m}|\Im[z_T(z_j)]|$.
We can further rewrite \eqref{e:quadvar2} as a sum of differences. For \eqref{e:var}, we have
\begin{align}\begin{split}\label{e:quadvarexp1}
&\phantom{{}={}}n^2[ M^n(z_j), M^n(z_j) ]_t
=\sum_{0< s\leq t\atop \Delta x_i(s)>0}\frac{1}{(\theta n)^2}\frac{1}{(x_i(s)/\theta n-z_s(z_j))^4}+\OO\left(\frac{N^n_t}{ n^3}\right)\\
&=\sum_{0< s\leq t\atop \Delta x_i(s)>0}-\frac{1}{3\theta n}\left(\frac{1}{(x_i(s)/\theta n-z_s(z_j))^3}-\frac{1}{(x_i(s-)/\theta n-z_s(z_j))^3}\right)+\OO\left(\frac{N^n_t}{ n^3}\right)\\
&=-\frac{1}{6\theta }\sum_{0< s\leq t\atop \Delta x_i(s)>0}\left(\del_z^2 m_{s}^n(z_s(z_j)) -\del_z^2 m_{s-}^n(z_s(z_j))\right)+\OO\left(\frac{N^n_t}{  n^3}\right)
\end{split}\end{align}
We recall the stopping times defined above Claim \ref{c:waittime}, and rewrite 
\eqref{e:quadvarexp1} as
\begin{align}\begin{split}\label{e:quadvarexp2}
&\phantom{{}={}}n^2[ M^n(z_j), M^n(z_j) ]_t
=-\frac{1}{6\theta }\sum_{0<l\leq N_t^n}\left(\del_z^2 m_{\tau^n_l}^n(z_{\tau^n_l}(z_j)) -\del_z^2 m_{\tau^n_{l-1}}^n(z_{\tau^n_l}(z_j))\right)+\OO\left(\frac{N^n_t}{  n^3}\right)\\
&=-\frac{1}{6\theta }\sum_{0<l\leq N_t^n}\left(\del_z^2 m_{\tau^n_l}^n(z_{\tau^n_l}(z_j)) -\del_z^2 m_{\tau^n_{l-1}}^n(z_{\tau^n_{l-1}}(z_j))-\del^3_zm_{\tau^n_{l-1}}^n(z_{\tau^n_{l-1}}(z_j))(z_{\tau^n_l}(z_j)-z_{\tau^n_{l-1}}(z_j))\right)\\
&+\OO\left(N_t^n\left(\frac{1}{ n^3}+\sup_{0<l\leq N_t^n} |z_{\tau^n_l}-z_{\tau^n_{l-1}}|^2\right)\right)\\
&=-\frac{1}{6\theta}\left(\del_z^2 m_{t}^n(z_{t}(z_j))-\del_z^2 m_{0}^n(z_{0}(z_j))-\int_0^t\del_z^3m_s^n(z_s(z_j))\rd z_s(z_j)\right)\\
&+\OO\left(N_t^n\left(\frac{1}{  n^3}+\sup_{0<l\leq N_t^n} |z_{\tau^n_l}-z_{\tau^n_{l-1}}|^2\right)+|z_t(z_j)-z_{\tau^n_{N_t^n}}(z_j)|\right)\\
&\Rightarrow -\frac{1}{6\theta }\int_0^t (\del_s\del^2_z m_s)(z_s(z_j))\rd s,
\end{split}\end{align}
where in the last line we used Claim \eqref{c:waittime} and that $z_t$ is Lipschitz with respect to $t$. This finishes the proof of  \eqref{e:var}.
For \eqref{e:cov}, we have
\begin{align*}
&\phantom{{}={}}n^2[ M^n(z_j), M^n(z_k) ]_t
=\sum_{0< s\leq t\atop \Delta x_i(s)>0}\frac{1}{(\theta n)^2}\frac{1}{(x_i(s)/\theta n-z_s(z_j))^2(x_i(s)/\theta n -z_s(z_k))^2}+\OO\left(\frac{N^n_t}{ n^3}\right)\\
&=\frac{1}{(\theta n)^2}\sum_{0< s\leq t\atop \Delta x_i(s)>0}\left(\frac{1}{(z_s(z_j)-z_s(z_k))^2}\left(\frac{1}{(x_i(s)/\theta n-z_s(z_j))^2}+\frac{1}{(x_i(s)/\theta n-z_s(z_k))^2}\right)\right.\\
&\left.-\frac{2}{(z_s(z_j)-z_s(z_k))^3}\left(\frac{1}{(x_i(s)/\theta n-z_s(z_j))}-\frac{1}{(x_i(s)/\theta n-z_s(z_k))}\right)\right)+\OO\left(\frac{N^n_t}{ n^3}\right)\\
&=-\frac{1}{\theta}\sum_{0<s\leq t\atop \Delta x_i(s)>0}
\frac{(m^n_s(z_s(z_j))-m^n_{s-}(z_s(z_j)))+(m^n_s(z_s(z_k))-m^n_{s-}(z_s(z_k)))}{(z_s(z_j)-z_s(z_k))^2}\\
&-
\frac{2}{\theta}\sum_{0<s\leq t\atop \Delta x_i(s)>0}\frac{(h^n_s(z_s(z_j))-h^n_{s-}(z_s(z_j)))-(h^n_s(z_s(z_k))-h^n_{s-}(z_s(z_k)))}{ (z_s(z_j)-z_s(z_k))^3}+\OO\left(\frac{N^n_t}{ n^3}\right),
\end{align*}
where $h_t^n(z)$ is the logarithmic potential of the empirical measure $\mu_t^n$,
\begin{align*}
h_t^n(z)=\int \ln(x-z)\rd \mu_t^n(x)=\frac{1}{n}\sum_{i=1}^n \ln (x_i(t)-z),\quad z\in \bC\setminus\bR.
\end{align*}
Thanks to Theorem \eqref{t:LLN}, we have 
\begin{align*}
h_t^n(z)\Rightarrow h_t(z)=\int \ln (x-z)\rd \mu_t,\quad z\in \bC\setminus\bR,
\end{align*}
where the logarithmic potential $h_t(z)$ is defined in \eqref{e:defht}.
By the same argument as in \eqref{e:quadvarexp2}, we get
\begin{align*}
n^2\langle M^n(z_j), M^n(z_k) \rangle_t
&\Rightarrow
-\frac{1}{\theta}\int_0^t \frac{(\del_s m_s)(z_s(z_j))+(\del_sm_s)(z_s(z_k))}{(z_s(z_j)-z_s(z_k))^2}+\frac{2((\del_s h_s)(z_s(z_j))-(\del_sh_s)(z_s(z_k)))}{(z_s(z_j)-z_s(z_k))^3}\rd s\\
&=-\frac{1}{\theta}\int_0^t\frac{(\del_s m_s)(z_s(z_j))+(\del_sm_s)(z_s(z_k))}{(z_s(z_j)-z_s(z_k))^2}-\frac{2(e^{-m_0(z_j)}-e^{-m_0(z_k)})}{(z_s(z_j)-z_s(z_k))^3}\rd s.
\end{align*}
This finishes the proof of \eqref{e:cov}.

\emph{Step three: subsequential limit. }

In the first step, we have proven that as $n$ goes to infinity, the random processes $\{(g^n_t(z_t(z_j)))_{1\leq j\leq m})\}_{0\leq t\leq T}$ are tight. Without loss of generality, by passing to a subsequence, we assume that they weakly converge towards to a random process $\{(\cG_j(t))_{1\leq j\leq m})\}_{0\leq t\leq T}$. We check that the limit process satisfies the stochastic differential equation \eqref{e:limitprocess}. The random process $\{(g^n_t(z_t(z_j)))_{1\leq j\leq m})\}_{0\leq t\leq T}$ satisfies the stochastic differential equation \eqref{e:gsde1}. For the first term on the righthand side of \eqref{e:gsde1}, by our assumption $(g_0^n(z_j))_{1\leq j\leq m}\Rightarrow (g_0(z_j))_{1\leq j\leq m}$. For the second term, by Theorem \ref{t:LLN}, we have
\begin{align*}
&\phantom{{}={}}\int_0^t \del_z m^n_s(z_s(z_j))e^{-m_0(z_j)} n\left(1-e^{-(m^n_s(z_s(z_j))-m_0(z_j))}\right)\rd s\\
&=\int_0^t \del_z m^n_s(z_s(z_j))e^{-m_0(z_j)} g^n_t(z_s(z_j))\rd s
+\OO\left(\int_0^tn|m_s^n(z_s(z_j))-m_0(z_j)|^2\rd s\right)\\
&\Rightarrow \int_0^t \del_z m_s(z_s(z_j))e^{-m_0(z_j)} \cG_j(s)\rd s
= \int_0^t \frac{\del_zm_0(z_j)e^{-m_0(z_j)}}{1-s\del_zm_0(z_j)e^{-m_0(z_j)}} \cG_j(s)\rd s
\end{align*}
where the last term vanishes, because the processes $\{(g^n_t(z_t(z_j)))_{1\leq j\leq m})\}_{0\leq t\leq T}$ are stochastically bounded, i.e. \eqref{e:stBound}.
For the third term, by Theorem \ref{t:LLN} and Proposition \ref{p:estimatemt}, we have
\begin{align*}\begin{split}
\int_{0}^t\left((\del_z m^n_s(z_s(z_j)))^2-\del_z^2 m^n_s(z_s(z_j))\right)e^{-m_s^n(z_s(z_j))}\rd s&\Rightarrow 
\int_{0}^t\left((\del_z m_s(z_s(z_j)))^2-\del_z^2 m_s(z_s(z_j))\right)e^{-m_0(z_j)}\rd s\\
&=\int_0^t\frac{((\del_z m_0(z_j))^2-\del_z^2 m_0(z_j))e^{-m_0(z_j)}}{(1-s\del_z m_0(z_j)e^{-m_0(z_j)})^3}\rd s.
\end{split}
\end{align*}
For the fourth term, in Step two we have proven that $\{(nM_t^n(z_j))_{1\leq j\leq m}\}_{0\leq t\leq T}$ converges weakly towards a centered complex Gaussian process $\{(\cW_j(t))_{1\leq j\leq m}\}_{0\leq t\leq T}$, which is characterized by \eqref{e:var} and \eqref{e:cov}. 
This finishes the proof of Theorem \ref{p:CLT}.
\end{proof}

\subsection{Extreme particles}\label{s:extremep}

In the following we first derive a large deviation estimate of the extreme particles of the $\beta$-nonintersecting random walks. Then Theorem \ref{t:CLT2} follows from Theorem \ref{t:CLT} by a contour integral.

\begin{proposition}\label{p:extremePbound}
Suppose the initial data $\bmx(0)$ satisfies Assumption \ref{a:ibound}. For any time $t>0$, there exists a constant $\fc$ depending on $\fb$ and $t$, such that
\begin{align}\label{e:extremePbound}
\fc n\geq x_1(t)\geq x_2(t)\geq \cdots \geq x_n(t), 
\end{align} 
with probability at least $1-\exp(-cn)$.
\end{proposition}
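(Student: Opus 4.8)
The plan is to reduce, via an attractive (monotone) coupling, to the fully--packed initial condition (the empty Young diagram), for which Theorem~\ref{t:density} gives an exact formula, and then to extract a tail bound on the length $\la_1(t)$ of the first row.

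\emph{Step 1: monotonicity and reduction to the empty diagram.} First I would show that the $\beta$-nonintersecting Poisson random walk is attractive: if $\bmx(0)\le\bmy(0)$ holds coordinatewise with both configurations in $\bW^n_\theta$, then the two dynamics can be coupled so that $\bmx(t)\le\bmy(t)$ for all $t\ge0$. The only point to check is that, whenever $\bmx\le\bmy$ and $x_i=y_i=:v$, the jump rate $\theta n\prod_{j\ne i}\frac{v-x_j+\theta}{v-x_j}$ of the $i$-th particle of $\bmx$ is at most the corresponding rate for $\bmy$; comparing the factors one by one, this holds because $u\mapsto 1+\theta/(v-u)$ is increasing on $\{u<v\}$ and on $\{u>v\}$, while in the Weyl chamber one has $x_j<v<x_k$, $y_j<v<y_k$ and $x_j\le y_j$ for $j>i>k$. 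A basic coupling then preserves the ordering, the order constraint being automatically respected since a particle whose left neighbour is at distance exactly $\theta$ has rate $0$. Given $\bmx(0)$ satisfying Assumption~\ref{a:ibound}, I would take $\bmy(0)=(\lceil\fb n\rceil+(n-i)\theta)_{1\le i\le n}$, so that $\bmx(0)\le\bmy(0)$ and hence $x_1(t)\le y_1(t)$. Since $\cL^n_\theta$ depends only on the differences $x_i-x_j$, the process $\bmy(t)$ has the same law as $\lceil\fb n\rceil\mathbf 1+\bmz(t)$, where $\bmz(t)$ is started from the empty diagram; thus $x_1(t)$ is stochastically dominated by $\lceil\fb n\rceil+(n-1)\theta+\la_1(t)$ with $\la(t)\sim\bP_t$, and it suffices to prove $\bP_t(\la_1\ge Cn)\le e^{-cn}$ for $C=C(\theta,t)$ large.

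\emph{Step 2: the tail of the first row.} Because $\bP_t$ is a probability measure, Theorem~\ref{t:density} gives the exact normalization
\[
\sum_{\bmla\in\bY^n}(\theta tn)^{|\bmla|}\prod_{\Box\in\bmla}\frac{\theta n+a_\Box'-\theta l_\Box'}{(a_\Box+\theta l_\Box+\theta)(a_\Box+\theta l_\Box+1)}=e^{\theta tn^2}.
\]
For $\bmla$ with $\la_1=\ell$, deleting the first row yields $\bmmu=(\la_2,\la_3,\dots)$; a box in rows $\ge2$ of $\bmla$ keeps its arm, leg and co-arm and loses exactly $1$ from its co-leg, so its weight only increases, giving $\prod_{\Box\in\bmla,\,i\ge2}(\cdots)\le\prod_{\Box\in\bmmu}(\cdots)$. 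Using $\la_j'\ge1$, the box $(1,j)$ has weight at most $\frac{\theta n+j-1}{(\ell-j+\theta)(\ell-j+1)}$, so the first row contributes at most $\frac{\Gamma(\theta n+\ell)\Gamma(\theta)}{\Gamma(\theta n)\Gamma(\ell+\theta)\,\ell!}$. Summing over $\bmla$ with $\la_1\ge L$, discarding the constraint $\mu_1\le\la_1$ and invoking the normalization for the $\bmmu$-sum, I would obtain
\[
\bP_t(\la_1\ge L)\le\frac{\Gamma(\theta)}{\Gamma(\theta n)}\sum_{\ell\ge L}(\theta tn)^{\ell}\,\frac{\Gamma(\theta n+\ell)}{\Gamma(\ell+\theta)\,\ell!}.
\]
Then $\Gamma(\theta n+\ell)/\Gamma(\theta n)\le(\theta n+\ell)^{\ell}$, $\Gamma(\theta)/\Gamma(\ell+\theta)\le(\ell+1)/((\theta\wedge1)\,\ell!)$ and $\ell!\ge(\ell/e)^{\ell}$ bound the $\ell$-th term by $\frac{\ell+1}{\theta\wedge1}\bigl(e^{2}\theta tn(\theta n+\ell)/\ell^{2}\bigr)^{\ell}$; choosing $C$ large enough that $e^{2}\theta tn(\theta n+\ell)\le\tfrac12\ell^{2}$ for all $\ell\ge Cn$, this is $\le\frac{\ell+1}{\theta\wedge1}2^{-\ell}$, and summing the geometric tail gives $\bP_t(\la_1\ge Cn)\le e^{-cn}$. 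Therefore $x_1(t)\le\lceil\fb n\rceil+(n-1)\theta+Cn\le\fc n$ with $\fc\deq\fb+\theta+C+1$, with probability at least $1-e^{-cn}$, and the remaining inequalities in \eqref{e:extremePbound} hold since $\bmx(t)\in\bW^n_\theta$.

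\emph{The main obstacle} is the tail estimate in Step~2. A priori $|\bmla(t)|$ is typically of order $n^{2}$ (it equals the Poisson process $N^n_t$ of rate $\theta n^{2}$), so the trivial bound $\la_1\le|\bmla|$ is off by a factor $n$, and the first row of a diagram of $\sim n^2$ boxes could in principle be as long as $\sim n^2$. What makes the argument work is the interplay between the exact total-mass identity and the first-row-removal monotonicity of the box product, which collapses the multi-parameter sum over Young diagrams into a one-dimensional geometric sum. A secondary technical point is verifying attractiveness despite the seemingly repulsive interaction; alternatively, the bound $\bP_t(\la_1\ge Cn)\le e^{-cn}$ could be deduced from the concentration estimates for discrete $\beta$ ensembles (here with Charlier weight) established in \cite{MR3668648}.
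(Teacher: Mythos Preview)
Your proof is correct. Step~1 (the monotone coupling) is essentially identical to the paper's argument: the paper constructs the same coupling with $\bmy(0)=(\lceil\fb n\rceil+(n-i)\theta)_{1\le i\le n}$ and verifies the same factor-by-factor inequality $0\le\frac{x_i-x_j+\theta}{x_i-x_j}\le\frac{y_i-y_j+\theta}{y_i-y_j}$ when $x_i=y_i$ and $x_j\le y_j$.

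Step~2 is where you genuinely diverge. The paper does not estimate $\bP_t(\la_1\ge Cn)$ directly; it observes that the law \eqref{e:density} is a discrete $\beta$ ensemble with Charlier weight and simply quotes \cite[Theorem~7.1]{MR3668648} as a black box (this is the paper's Proposition~\ref{p:0initialbound}). Your argument is a self-contained replacement for that citation: the first-row-removal monotonicity of the box weights (co-leg drops by one, numerator increases, arm/leg/co-arm unchanged) together with the exact total-mass identity from Theorem~\ref{t:density} collapses the sum over $\bY^n$ to a one-dimensional sum that is handled by Stirling. This buys you an elementary proof that avoids the large-deviation machinery of \cite{MR3668648}; the paper's route is shorter on the page but imports substantially more. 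Your final remark already anticipates this: the alternative you mention at the end \emph{is} precisely what the paper does.
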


We notice that the $\beta$-nonintersecting Poisson random walks are shift invariant. Suppose that the $\beta$-nonintersecting Poisson random walk $\bmy(t)$ starts from $(a+(n-1)\theta, a+(n-2)\theta, \cdots, a)$, where $a= a(n)\in \bZ_{\geq 0}$. Then it follows from Theorem \ref{t:density} that for any fixed $t>0$, the law of $\bmy(t)$ is given by
\begin{align}\label{e:defPt}
\bP_{t}(y_1,y_2,\cdots, y_n)=\frac{1}{Z_n}\prod_{1\leq i <j\leq n}\frac{\Gamma(y_i-y_j+1)\Gamma(y_i-y_j+\theta)}{\Gamma(y_i-y_j)\Gamma(y_i-y_j+1-\theta)}\prod_{i=1}^n\frac{(\theta  t n)^{y_i-a}}{\Gamma(y_i-a+1)},
\end{align}
where the partition function $Z_n$ is given by
\begin{align*}
Z_n=e^{\theta  t n^2}(\theta  t n)^{\theta(n-1)n/2}\prod_{i=1}^n\frac{\Gamma(i\theta)}{\Gamma(\theta)}.
\end{align*}

The measure $\bP_t(y_1,y_2,\cdots, y_n)$ is a discrete $\beta$ ensemble studies in \cite{MR3668648}. The next proposition follows from \cite[Theorem 7.1]{MR3668648}.
\begin{proposition}\label{p:0initialbound}
Take $a=\lceil \fb n\rceil$ and $ t>0$. There exits a constant $\fc$ depending $\fb$ and $t$, such that the measure $\bP_ t$ as in \eqref{e:defPt} satisfies  
\begin{align*}
\bP_{ t}\left(y_1\leq \fc n\right)\geq 1-\exp(-cn). 
\end{align*} 
\end{proposition}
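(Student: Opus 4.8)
The plan is to recognize $\bP_t$ as a discrete $\beta$-ensemble with a Charlier-type weight in the sense of \cite{MR3668648} and to quote the large deviation bound for its largest particle proved there.

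First I would bring \eqref{e:defPt} into the standard form \eqref{e:disc}. It is supported on $y_i=\la_i+(n-i)\theta+a$ with integer $\la_1\ge\cdots\ge\la_n\ge 0$ and has density proportional to $\prod_{i<j}\frac{\Ga(y_i-y_j+1)\Ga(y_i-y_j+\theta)}{\Ga(y_i-y_j)\Ga(y_i-y_j+1-\theta)}\prod_i w(y_i;n)$ with the Charlier weight $w(y;n)=(\theta t n)^{y-a}/\Ga(y-a+1)$, which is \eqref{e:disc} with $N=n$. Writing $y=n\xi$ and using Stirling's formula one gets, uniformly on compact subsets of $(\fb,\infty)$,
\begin{align*}
\frac1n\log w(n\xi;n)=-V(\xi)+\OO\!\Big(\tfrac{\log n}{n}\Big),\qquad V(\xi)=(\xi-\fb)\log(\xi-\fb)-(\xi-\fb)-(\xi-\fb)\log(\theta t),
\end{align*}
while $w(y;n)=0$ for $y<a$, i.e.\ $V\equiv+\infty$ on $(-\infty,\fb)$. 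Here $V$ is real-analytic and superlinearly confining on $(\fb,\infty)$, and $w(y;n)/w(y+1;n)=(y+1-a)/(\theta t n)$ is the restriction of an affine function; hence the analyticity and scaling hypotheses on the weight required in \cite{MR3668648} hold (the Charlier ensemble being one of the running examples there).

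Next I would use the equilibrium picture supplied by that framework: the empirical measure $\frac1n\sum_i\delta_{y_i/n}$ concentrates, with exponential rate, on the constrained equilibrium measure $\mu_V$ minimizing $\iint\log|x-y|^{-1}\,\rd\mu(x)\,\rd\mu(y)+\int V\,\rd\mu$ over probability measures obeying the upper density constraint coming from the $\theta$-spacing of the particles. Because $V$ grows superlinearly, $\mu_V$ has compact support; let $E=E(\fb,t)$ be its right endpoint and set $\fc\deq E+1$. Then \cite[Theorem 7.1]{MR3668648} yields, for each $\delta>0$, a constant $c>0$ with $\bP_t\big(y_1\ge(E+\delta)n\big)\le e^{-cn}$ for all large $n$; taking $\delta=1$ gives $\bP_t(y_1\le\fc n)\ge 1-e^{-cn}$. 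The finitely many remaining small values of $n$ are absorbed by enlarging $\fc$ and shrinking $c$.

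The step I expect to be the main obstacle is verifying that $\mu_V$ meets the \emph{regularity} (off-criticality) hypothesis needed for \cite[Theorem 7.1]{MR3668648} — density strictly below the upper constraint in the bulk of each band, square-root decay at the soft edges, support a finite union of intervals — and controlling the location of $E$. For Charlier-type weights this is classical and can, if desired, be made explicit via the Markov--Krein correspondence of Theorem \ref{t:MKcor} together with the profile $\nu_t$ of \eqref{e:defnu1}--\eqref{e:defnu2} (so that $Q(\mu_V)$ is a translate of a reflected $\nu_t$); but for the statement here only the finiteness of $E$, which already follows from superlinear growth of $V$, is used. Since $t$ is fixed, no uniformity in $t$ is required.
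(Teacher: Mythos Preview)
Your proposal is correct and is exactly the paper's approach: the paper simply states that the proposition follows from \cite[Theorem 7.1]{MR3668648}, without giving any further argument. Your writeup just spells out the hypothesis-checking (Charlier weight, scaling potential, compactly supported equilibrium measure) that the paper leaves implicit in that citation.
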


\begin{proof}[Proof of Proposition \ref{p:extremePbound}]
Let $\bmx(t)$ be a $\beta$-nonintersecting Poisson random walk with initial data $\bmx(0)\in \bW_\theta^n$ satisfying \eqref{e:ibound}, and  $\bmy(t)$ another independent $\beta$-nonintersecting Poisson random walk with initial data $\bmy(0)=(\lceil \fb n\rceil+(n-1)\theta, \lceil\fb n\rceil+(n-2)\theta, \cdots, \lceil \fb n \rceil)$. Let $\fc$ be as in Proposition \ref{p:0initialbound}, we prove by constructing a coupling of $\bmx(t)$ and $\bmy(t)$, that 
\begin{align}\label{e:PBound}
\bP(x_1( t)\leq \fc n)\geq \bP(y_1( t)\leq \fc n).
\end{align}
Then the claim \eqref{e:extremePbound} follows from combining Proposition \ref{p:0initialbound} and \eqref{e:PBound}.

%We construct a coupling of two $\beta$-nonintersecting Poisson random walks $\bmx(t)$ and $\bmy(t)$, with initial data $\bmx(0)\in \bW_\theta^n$ satisfying \eqref{e:ibound} and $\bmy(0)=(\lceil \fa n\rceil+(n-1)\theta, \lceil\fa n\rceil+(n-2)\theta, \cdots, \lceil \fa n \rceil)$.

We define the coupling $(\hat\bmx(t), \hat\bmy(t))$ as a Poisson random walk on $\bW^n_\theta\times \bW^n_\theta$, with initial data $(\hat\bmx(0), \hat\bmy(0))=(\bmx(0), \bmx(0))$, and generator
\begin{align*}\begin{split}
\hat\cL^n_\theta f(\bmx, \bmy)&=\theta n\sum_{i=1}^n\left[\frac{V(\bmx+\theta \bme_i)}{V(\bmx)}-\frac{V(\bmy+\theta \bme_i)}{V(\bmy)}\right]_+\left(f(\bmx+\bme_i, \bmy)-f(\bmx, \bmy)\right)\\
&+\theta n\sum_{i=1}^n\left[\frac{V(\bmy+\theta \bme_i)}{V(\bmy)}-\frac{V(\bmx+\theta \bme_i)}{V(\bmx)}\right]_+\left(f(\bmx, \bmy+\bme_i)-f(\bmx, \bmy)\right)\\
&+\theta n\sum_{i=1}^n\min\left\{\frac{V(\bmx+\theta \bme_i)}{V(\bmx)},\frac{V(\bmy+\theta \bme_i)}{V(\bmy)}\right\}\left(f(\bmx+\bme_i, \bmy+\bme_i)-f(\bmx, \bmy)\right).
\end{split}\end{align*}
where $[x]_+=\max\{x,0\}$. The marginal distributions of $\hat\bmx(t)$ and $\hat\bmy(t)$ coincide with those of $\bmx(t)$ and $\bmy(t)$ respectively, 
\begin{align*}
\{\hat\bmx(s)\}_{0\leq s\leq t}\overset{d}{=}\{\bmx(s)\}_{0\leq s\leq t},\quad \{\hat\bmy(s)\}_{0\leq s\leq t}\overset{d}{=}\{\bmy(s)\}_{0\leq s\leq t}.
\end{align*}

For the initial data, we have 
\begin{align*}
\hat x_i(0)\leq \fb n\leq \hat y_i(0), \quad 1\leq i\leq n.
\end{align*}
In the following we prove that the coupling process $(\hat \bmx(t), \hat\bmy(t))$ satisfies
\begin{align}\label{e:comparison1}
\bP\left(\text{for all $t\geq 0$ and $1\leq i\leq n$, }\hat x_i(t)\leq \hat y_i(t)\right)=1.
\end{align}
We define a sequence of stopping times, $\tau^n_1, \tau^n_2, \tau^n_3, \cdots$, where $\tau^n_k$  is the time of the $k$-th jump of the coupling process $(\hat \bmx(t), \hat \bmy(t))$. We prove by induction that 
\begin{align}\label{e:comparison2}
\bP\left(\text{for all $0\leq t\leq \tau^n_k$ and $1\leq i\leq n$, }\hat x_i(t)\leq \hat y_i(t)\right)=1.
\end{align}
Then \eqref{e:comparison1} follows by noticing that $\lim_{k\rightarrow\infty}\tau^n_k=\infty$. We assume that \eqref{e:comparison2} holds for $k$, we prove it for $k+1$. If $\hat x_i(\tau^n_k)<\hat y_i(\tau^n_k)$, then with probability one, $\hat x_i(\tau^n_{k+1})\leq \hat y_i(\tau^n_{k+1})$. If $\hat x_i(\tau^n_k)=\hat y_i(\tau^n_k)$, by our assumptions, $\hat\bmx(\tau^n_k), \hat \bmy(\tau^n_k)\in \bW^n_\theta$ and $\hat x_j(\tau^n_k)\leq \hat y_j(\tau^n_k)$ for all $1\leq j\leq n$, we have
\begin{align*}
0\leq \frac{\hat x_i(\tau^n_k)-\hat x_j(\tau^n_k)+\theta}{\hat x_i(\tau^n_k)-\hat x_j(\tau^n_k)}\leq \frac{\hat y_i(\tau^n_k)-\hat y_j(\tau^n_k)+\theta}{\hat y_i(\tau^n_k)-\hat y_j(\tau^n_k)},\quad j\neq i.
\end{align*}
Thus the jump rate from $(\hat \bmx(\tau^n_k), \hat\bmy(\tau^n_k))$ to $(\hat \bmx(\tau^n_k)+\bme_i, \hat\bmy(\tau^n_k))$,
\begin{align*}\begin{split}
&\phantom{{}={}}\theta n\left[\frac{V(\hat \bmx(\tau^n_k)+\theta \bme_i)}{V(\hat \bmx(\tau^n_k))}-\frac{V(\hat\bmy(\tau^n_k)+\theta \bme_i)}{V(\hat \bmy(\tau^n_k))}\right]_+\\
&=\theta n\left[\prod_{j:j\neq i}\frac{\hat x_i(\tau^n_k)-\hat x_j(\tau^n_k)+\theta}{\hat x_i(\tau^n_k)-\hat x_j(\tau^n_k)}-\prod_{j:j\neq i}\frac{\hat y_i(\tau^n_k)-\hat y_j(\tau^n_k)+\theta}{\hat y_i(\tau^n_k)-\hat y_j(\tau^n_k)}\right]_+= 0.
\end{split}\end{align*}
vanishes. Therefore with probability one, $\hat x_i(\tau^n_{k+1})\leq \hat y_i(\tau^n_{k+1})$.
This finishes the proof of \eqref{e:comparison1} and \eqref{e:comparison2}.

It follows from \eqref{e:comparison1}, 
%For $\bmx, \bmy\in \bW^n_\theta$, if $x_i=y_i$ and $x_j\leq y_j$ for all $1\leq i\leq n$, then we have
%\begin{align}
%0\leq \frac{x_i-x_j+\theta}{x_i-x_j}\leq \frac{y_i-y_j+\theta}{y_i-y_j},\quad j\neq i,
%\end{align}
%and thus
%\begin{align}
%\frac{V(\bmx+\theta \bme_i)}{V(\bmx)}-\frac{V(\bmy+\theta \bme_i)}{V(\bmy)}
%=\prod_{j:j\neq i}\frac{x_i-x_j+\theta}{x_i-x_j}-\prod_{j:j\neq i}\frac{y_i-y_j+\theta}{y_i-y_j}\leq 0.
%\end{align}
\begin{align}\label{e:PBound2}
\bP(\hat x_1( t )\leq \fc n)\geq \bP(\hat y_1( t )\leq \fc n).
\end{align}
Since the marginal distributions of $\hat\bmx(t)$ and $\hat\bmy(t)$ coincide with those of $\bmx(t)$ and $\bmy(t)$ respectively, \eqref{e:PBound} follows from combining Proposition \ref{p:0initialbound} and \eqref{e:PBound2}. This finishes the proof of Propostion \ref{p:extremePbound}.
\end{proof}

\begin{proof}[Proof of Theorem \ref{t:CLT2}]
We take a contour $\cC$ which encloses a neighborhood of $[0,\fc/\theta]$. Then with exponentially high probability we have
\begin{align*}
n\int f_j(x)\rd(\mu_t^n(x)-\mu_t(x))=\frac{1}{2\pi\ri}\oint_\cC g^n_t(w)f_j(w)\rd w, \quad 1\leq j\leq m.
\end{align*}
By Proposition \ref{e:defOmega}, $z_t(z)$ is a homeomorphism from the closure of $\Omega_t\cap \bC_+$ to $\bC_+\cup \bR$, and from the closure of $\Omega_t\cap \bC_-$ to $\bC_-\cup \bR$.  By a change of variable, we have
\begin{align*}
\frac{1}{2\pi\ri}\oint_\cC g^n_t(w)f_j(w)\rd w=\frac{1}{2\pi\ri}\oint_{z_t^{-1}(\cC)} g^n_t(z_t(z))f_j(z_t(z))\rd z_t(z), \quad 1\leq j\leq m.
\end{align*} 
By the continuous mapping theorem of weak convergence, it follows from Theorem \ref{t:CLT}
\begin{align*}
&\left\{\left(\frac{1}{2\pi\ri}\oint_{z_t^{-1}(\cC)} g^n_t(z_t(z))f_j(z_t(z))\rd z_t(z)\right)_{1\leq j\leq m}\right\}_{0\leq t\leq T}\Rightarrow \{(\cF_j(t))_{1\leq j\leq m}\}_{0\leq t\leq T}\\
&\cF_j(t)\deq \frac{1}{2\pi\ri}\oint_{z_t^{-1}(\cC)} g_t(z_t(z))f_j(z_t(z))\rd z_t(z),\quad 1\leq j\leq m,
\end{align*}
and the means and the covariances of the Gaussian process $\{(\cF_j(t))_{1\leq j\leq m}\}_{0\leq t\leq T}$ are given by
\begin{align*}
\bE[\cF_j(t)]&=\frac{1}{2\pi\ri}\oint_{z_t^{-1}(\cC)}\mu(t, z) f_j(z_t(z))\rd z_t(z)=\frac{1}{2\pi\ri}\oint_{\cC}\mu(t, z_t^{-1}(w))f_j(w)\rd w\\
\cov[\cF_j(s), \cF_{k}(t)]&=-\frac{1}{4\pi^2}\oint_{z_s^{-1}(\cC)}\oint_{z_t^{-1}(\cC)}\sigma(s, z,t,z') f_j(z_s(z))f_k(z_t(z'))\rd z_s(z)\rd z_t(z')\\
&=-\frac{1}{4\pi^2}\oint_{\cC}\oint_{\cC}\sigma(s, z_s^{-1}(w),t,z_t^{-1}(w')) f_j(w)f_k(w')\rd w\rd w'.
\end{align*}
where $\mu(t,z)$ and $\sigma(s, z,t,z')$ are as defined in \eqref{e:mean0} and \eqref{e:variance0}. This finishes the proof of Theorem \ref{t:CLT2}.
\end{proof}

\begin{proof}[Proof of Corollary \ref{c:GFF}]
We denote $\tilde f_j(x)=\int_0^x f_j(x)\rd x$ an anti-derivative of $f_j$ for $1\leq j\leq m$. We perform an integration by part,
\begin{align*}
\left(\sqrt{\pi\theta}\int_\bR f_j(x) (H_n(x,t)-\bE[H_n(x,t)])\rd x\right)_{1\leq j\leq m}
=\left(n\sqrt{\pi\theta}\int_{\bR} \tilde f_j(x)\left(\rd \mu^n_t(x)-\bE[\rd \mu^n_t(x)]\right)\right)_{1\leq j\leq m},
\end{align*}
which converges as $n$ goes to infinity in $D([0,T],\bR^m)$ weakly towards a Gaussian process, by Theorem \ref{t:CLT2}. In the following, we identify its covariance structure. By Theorem \ref{t:CLT2}, for any $s< t$ (the case $s=t$ follows by taking limit), the covariance 
\begin{align}\begin{split}\label{e:covcc}
&\phantom{{}={}}\cov\left[n\sqrt{\pi\theta}\int_{\bR} \tilde f_j(x)\left(\rd \mu^n_s(x)-\bE[\rd \mu^n_s(x)]\right), n\sqrt{\pi\theta}\int_{\bR} \tilde f_k(x)\left(\rd \mu^n_t(x)-\bE[\rd \mu^n_t(x)]\right)\right]\\
&=-\frac{1}{4\pi}\oint_{z_t^{-1}(\cC)}\oint_{z_s^{-1}(\cC)}\del_{z}\del_{z'}\log \left(\frac{z-z'}{z_{s}(z)-z_{s}(z')}\right)\tilde f_j(z_s(z))\tilde f_k(z_t(z'))\rd z\rd z'\\
&=-\frac{1}{4\pi}\oint_{z_t^{-1}(\cC)}\oint_{z_s^{-1}(\cC)}\del_{z}\del_{z'}\log \left(z-z'\right)\tilde f_j(z_s(z))\tilde f_k(z_t(z'))\rd z\rd z',
\end{split}\end{align}
where we used $z_s(z_t^{-1}(\cC))$ is outside the contour $\cC$, thus
\begin{align*}
&\phantom{{}={}}\oint_{z_t^{-1}(\cC)}\oint_{z_s^{-1}(\cC)}\del_{z}\del_{z'}\log \left(z_s(z)-z_s(z')\right)\tilde f_j(z_s(z))\tilde f_k(z_t(z'))\rd z\rd z'\\
&=\oint_{z_t^{-1}(\cC)}\left(\oint_{\cC}\frac{\tilde f_j(z)}{(z-z_s(z'))^2}\rd z\right) \tilde f_k(z_t(z'))\rd z_s(z')=0.
\end{align*}
We recall that $z_t(z)=z+te^{-m_0(z)}$ is conformal from $\Omega_t$ as defined in \eqref{e:defOmega} to $\bC\setminus \bR$, and is a homeomorphism from the closure of $\Omega_t\cap \bC_+$ to $\bC_+\cup \bR$, and from the closure of $\Omega_t\cup \bC_-$ to $\bC_-\cup \bR$.
We denote $\gamma_t=\del \Omega_t \cap \bC_+$, the boundary of $\Omega_t$ in the upper half plane. From \eqref{e:defOmega}, $\gamma_t$ is explicitly given by
\begin{align*}
\gamma_t \deq\left\{z\in \bC_+: \int \frac{\rd Q(\mu_0)(x)}{|x-z|^2}=\frac{1}{t}\right\}=\{z\in \bC_+: t=t(z)\},
\end{align*}
and $z_t(\gamma_t)\in \bR$.
We can deform the contours in \eqref{e:covcc}, and perform an integration by part
\begin{align}\begin{split}\label{e:covcc2}
&\phantom{{}={}}\cov\left[n\sqrt{\pi\theta}\int_{\bR} \tilde f_j(x)\left(\rd \mu^n_s(x)-\bE[\rd \mu^n_s(x)]\right), n\sqrt{\pi\theta}\int_{\bR} \tilde f_k(x)\left(\rd \mu^n_t(x)-\bE[\rd \mu^n_t(x)]\right)\right]\\
%&=-\frac{1}{4\pi}\oint_{\gamma_t\cup\bar \gamma_t}\oint_{\gamma_s\cup\bar \gamma_s}\del_{z}\del_{z'}\log \left(\frac{z-z'}{z_{s}(z)-z_{s}(z')}\right)\tilde f_j(z_s(z))\tilde f_k(z_t(z'))\rd z\rd z'\\
&=-\frac{1}{4\pi}\oint_{\gamma_t\cup\bar \gamma_t}\oint_{\gamma_s\cup\bar \gamma_s}\del_{z}\del_{z'}\log \left(z-z'\right)\tilde f_j(z_s(z))\tilde f_k(z_t(z'))\rd z\rd z'\\
&=-\frac{1}{4\pi}\oint_{\gamma_t\cup\bar \gamma_t}\oint_{\gamma_s\cup\bar \gamma_s}\log \left(z-z'\right) f_j(z_s(z))f_k(z_t(z'))\rd z_s(z)\rd z_t(z').
\end{split}\end{align}
We notice that on those contours $z\in \gamma_s\cup\bar \gamma_s$, or $z'\in \gamma_t\cup\bar \gamma_t$, $z_s(z)=x(z)$ and $z_t(z')=x(z')$ are real. Using this fact and the equality
\begin{align*}
2\log \left|\frac{z-z'}{z-\bar z'}\right|=\log \frac{(z-w)(\bar z-\bar z')}{(z-\bar z')(\bar z-z')},
\end{align*}
we can rewrite \eqref{e:covcc2}
\begin{align*}
&\phantom{{}={}}\cov\left[n\sqrt{\pi\theta}\int_{\bR} \tilde f_j(x)\left(\rd \mu^n_s(x)-\bE[\rd \mu^n_s(x)]\right), n\sqrt{\pi\theta}\int_{\bR} \tilde f_k(x)\left(\rd \mu^n_t(x)-\bE[\rd \mu^n_t(x)]\right)\right]\\
&=-\frac{1}{2\pi}\int_{\gamma_t}\int_{\gamma_s}\log \left|\frac{z-z'}{z-\bar z'}\right| f_j(z_s(z))f_k(z_t(z'))\rd z_s(z)\rd z_t(z')\\
&=\int_{z'\in \bH, t(z)=t}\int_{z\in\bH,t(z)=s}G(z,z') f_j(x(z))f_k(x(z'))\rd x(z)\rd x(z')\\
&=\cov\left[\int_{z\in \bH, t(z)=s} f_j(x(z))\fG(z) \rd x(z), \int_{z\in \bH, t(z)=t} f_k(x(z))\fG(z) \rd x(z)\right]
\end{align*}
This finishes the proof of Corollary \ref{c:GFF}.

\end{proof}

\bibliography{References}{}
\bibliographystyle{plain}

\end{document}